\documentclass[12pt,twoside]{article}
\title{Non-archimedean Monge-Amp{\`e}re measures for toric metrics on subvarieties of toric varieties}
\author{Chenying Lin\thanks{The author was supported by the SFB 1085 funded by DFG and the MICINN research project PID2023-147642NB-I00.}}

\usepackage[T1]{fontenc}

\usepackage[a4paper,hmargin=2.8cm,vmargin=2.0cm,includeheadfoot]{geometry}
\usepackage{float}
\usepackage[english]{babel}
\usepackage{amsmath}
\usepackage{amssymb}
\usepackage{dsfont}
\usepackage{array}
\usepackage{latexsym}
\usepackage{etoolbox}
\usepackage{mathtools}
\usepackage{underscore}

\usepackage{tocloft}

\usepackage{enumitem}

\usepackage{amsthm}
\theoremstyle{plain}

\usepackage{url}
\usepackage[dvipsnames]{xcolor}
\usepackage{graphicx}
\usepackage{tikz-cd}
\usepackage{multicol}
\usepackage{tikz}
\usetikzlibrary{arrows.meta,calc,positioning}

\usepackage{bm}
\usepackage{mathrsfs}  

\usepackage{aliascnt}

\newtheorem{thm}{Theorem}[section]

\newaliascnt{prop}{thm}
\newtheorem{prop}[prop]{Proposition}
\aliascntresetthe{prop}

\newaliascnt{cor}{thm}
\newtheorem{cor}[cor]{Corollary}
\aliascntresetthe{cor}

\newaliascnt{lem}{thm}
\newtheorem{lem}[lem]{Lemma}
\aliascntresetthe{lem}

\theoremstyle{definition}

\newaliascnt{defn}{thm}
\newtheorem{defn}[defn]{Definition}
\aliascntresetthe{defn}

\newaliascnt{rmk}{thm}
\newtheorem{rmk}[rmk]{Remark}
\aliascntresetthe{rmk}

\newaliascnt{ex}{thm}
\newtheorem{ex}[ex]{Example}
\aliascntresetthe{ex}

\usepackage{hyperref}
\hypersetup{colorlinks=true,citecolor=ForestGreen,linkcolor=blue,urlcolor=magenta}
\usepackage[capitalize,nameinlink,noabbrev]{cleveref}
\crefname{rmk}{Remark}{Remarks}
\crefname{thm}{theorem}{theorems}
\Crefname{thm}{Theorem}{Theorems}
\crefname{prop}{proposition}{propositions}
\Crefname{prop}{Proposition}{Propositions}
\crefname{cor}{corollary}{corollaries}
\Crefname{cor}{Corollary}{Corollaries}
\crefname{lem}{lemma}{lemmas}
\Crefname{lem}{Lemma}{Lemmas}
\crefname{ex}{example}{examples}
\Crefname{ex}{Example}{Examples}

\newcommand{\R}{\mathbb{R}} 
\newcommand{\Z}{\mathbb{Z}} 
\newcommand{\N}{\mathbb{N}} 
\newcommand{\Q}{\mathbb{Q}} 
\newcommand{\C}{\mathbb{C}}

\newcommand\an{\operatorname{an}}

\newcommand\trop{\operatorname{trop}}
\newcommand{\ddc}{\mathrm d'\mathrm d''}

\tikzset{
  outer edge/.style={draw=black, line width=0.9pt},
  inner edge/.style={draw=black, line width=0.85pt},
  highlight edge/.style={draw=blue, line width=1.4pt},
  final highlight/.style={draw=black, line width=1.15pt},
  flow arrow/.style={draw=black, line width=0.9pt, -{Latex[length=3mm,width=2mm]}},
  blue text/.style={text=blue, font=\Large},
  title text/.style={font=\Large}
}

\newtheorem{introthm}{Theorem}

\crefname{introthm}{theorem}{theorems}
\Crefname{introthm}{Theorem}{Theorems}

\newtheorem{introcor}{Corollary}

\crefname{introcor}{corollary}{corollaries}
\Crefname{introcor}{Corollary}{Corollaries}

\begin{document}

\maketitle

\begin{abstract}
    We study non-archimedean Monge-Amp{\`e}re measures of toric metrics on closed subvarieties of toric varieties, after reducing, if necessary, to the minimal torus-orbit closure containing the subvariety. The measure vanishes on the toric boundary and is supported on the tropical skeleton. Our main result explicitly describes their local structure in terms of the tropical germ of the subvariety and the tropical Green function of the metric. More precisely, the tropicalized Monge-Amp{\`e}re measure is the corresponding polyhedral Monge-Amp{\`e}re measure, which extends the tropical intersection product, while the original measure is recovered locally by tropical pullback from explicit tropical and convex data. This provides a toric counterpart to analogous results for subvarieties of abelian varieties, with new local phenomena arising from tropicalization. 

    \medskip
    MSC: Primary 14G40; Secondary 14T25, 14M25, 14G22, 32P05.
\end{abstract}

\vspace{1.5em}

\setcounter{tocdepth}{1}

\tableofcontents

\newpage

\section{Introduction}

\paragraph{Background and motivation.}
Monge-Amp{\`e}re measures are central objects in both archimedean and non-archimedean Arakelov geometry. In the archimedean setting, Yau's solution of the Calabi conjecture shows that, within a fixed K{\"a}hler class, a K{\"a}hler metric is uniquely determined by its Monge-Amp{\`e}re measure \cite{Yau}; the study of complex Monge-Amp{\`e}re measures has since become a central theme in pluripotential theory. On the non-archimedean side, Chambert-Loir \cite{ChambertLoir} associated to semipositive metrized line bundles $(L_1,\lVert\cdot\rVert_1),\ldots,(L_d,\lVert\cdot\rVert_d)$ on a $d$-dimensional proper variety $X$ a positive Radon measure
\[
c_1(L_1,\lVert\cdot\rVert_1)\wedge\cdots\wedge c_1(L_d,\lVert\cdot\rVert_d)
\]
on the Berkovich analytification $X^{\an}$. In particular, for a semipositive metrized line bundle $(L,\lVert\cdot\rVert)$, we refer to $c_1(L,\lVert\cdot\rVert)^{\wedge d}$ as its non-archimedean Monge-Amp{\`e}re measure.

These measures provide the local measure-theoretic counterpart of arithmetic intersection products. They also arise as limiting measures in equidistribution theorems \cite{YuanEquidistibution, Gubequidistri}. Such equidistribution results have important applications to Bogomolov-type problems, notably in the work of Gubler and Yamaki and in the subsequent proof of the geometric Bogomolov conjecture by Xie and Yuan \cite{GubBogomolov,yamaki2016strict,yamaki2018trace,XieYuan}. Thus, non-archimedean Monge-Amp{\`e}re measures form a bridge between local intersection theory, non-archimedean analytic geometry, and applications in Diophantine geometry. These methods have also found applications in mirror symmetry, for instance, in Yang Li's work relating non-archimedean Monge-Amp{\`e}re equations to the metric SYZ conjecture \cite{SYZ1,SYZ2}.

\paragraph{The toric setting.}
The purpose of this paper is to give a structural description of non-archimedean Monge-Amp{\`e}re measures for toric metrics on closed subvarieties of toric varieties. By translating these measures into tropical and polyhedral data, our description also makes them more explicit and easier to compute. 

The toric setting considered here is natural for several reasons. Closed subvarieties are basic objects in intersection theory and Arakelov geometry. Moreover, the class of closed subvarieties of toric varieties is broad: in particular, every projective variety belongs to this class, since it admits a closed embedding into a projective space. At the same time, the toric ambient space provides a natural tropicalization, which makes the geometry of the subvariety along the dense torus visible as a weighted tropical cycle. 

The theory of toric metrics, systematically developed by Burgos Gil, Philippon, and Sombra \cite{BPS}, provides a parallel description on the metric side. A toric metric on a toric line bundle is encoded by its tropical Green function on the real cocharacter space $N_\R$, and semipositivity translates into a convexity condition on this function. Semipositive toric metrics form a natural class that includes canonical metrics.

Our results provide a toric counterpart to Gubler's structural description of canonical measures on subvarieties of abelian varieties \cite{GubCanonical} and to its extension by Gubler and Stadlöder to Monge-Amp{\`e}re measures associated with toric metrics on subvarieties of abelian varieties \cite{GSMongeAmpere}. 

\paragraph{Tropical skeletons.}
The tropical descriptions of both the subvariety and the metric suggest that the Monge-Amp{\`e}re measure should be closely related to a natural polyhedral subset of the Berkovich analytification associated with tropicalization.

Let $X$ be a $d$-dimensional closed subvariety of a proper toric variety $Y$ with dense torus $T$ over a complete algebraically closed field $K$ with a non-trivial non-archimedean absolute value, and let $L$ be a toric line bundle on $Y$ with a semipositive toric metric $\lVert\cdot\rVert$. We may assume that $X\cap T\neq\varnothing$, after replacing $Y$, if necessary, by the closure of a suitable toric orbit. We denote by
\[
\trop:X^{\an}\cap T^{\an}\to N_\R
\]
the tropicalization map. 

The Monge-Amp{\`e}re measure $c_1(L|_X,\lVert\cdot\rVert)^{\wedge d}$ vanishes on the toric boundary $X^{\an}\backslash T^{\an}$, and its restriction to $X^{\an}\cap T^{\an}$ is supported on the tropical skeleton $\Sigma$; see \Cref{prop:MA support}. 

Skeletons provide a bridge between non-archimedean analytic geometry and polyhedral geometry. Skeletons arising from strictly semistable formal models have played an important role in the study of non-archimedean measures, notably in Gubler's description of canonical measures on subvarieties of abelian varieties; see \cite[Section 4]{BerkovichS1}, \cite[Section 4]{BerkovichS2} and \cite[Section 5]{GubCanonical}. However, in our setting, the natural object is the tropical skeleton, introduced by Ducros \cite{Duc12}, associated directly with the fixed tropicalization map, whose polyhedral structure reflects the local tropical geometry of the subvariety.

\paragraph{Polyhedral Monge-Amp{\`e}re measures.}
Let $\Gamma$ be the value group of $K$. To formulate our main theorem, we use a $\Gamma$-rational version of the polyhedral Monge-Amp{\`e}re measure recently introduced by Botero, Mazzon, and Pille-Schneider \cite{MApoly}. Their construction is a Monge-Amp{\`e}re operator on balanced $(\Z,\Q)$-polyhedral spaces, defined for rational piecewise affine functions by means of tropical intersection theory \cite{AllermannRau} and extended, by approximation, to a suitable class of continuous convex functions. On the maximal-dimensional open faces, it agrees with a multiple of the usual real Monge-Amp{\`e}re measure in the sense of Alexandrov \cite{realMA}. In contrast to the latter, however, the polyhedral Monge-Amp{\`e}re measure is defined intrinsically on the entire polyhedral space and thus also incorporates its lower-dimensional polyhedral structure. 

The rationality assumptions in \cite{MApoly} are not sufficient for our purposes. Since the valuation is not assumed to be discrete in our setting, the polyhedral complexes arising from tropicalization are $(\Z,\Gamma)$-polyhedral complexes, while the piecewise affine functions appearing in our approximation arguments are naturally $\Gamma$-rational, meaning that their affine pieces have rational slopes and constant terms in $\Gamma$. 
In \Cref{Section: poly MA}, we therefore develop the version of relevant polyhedral Monge-Amp{\`e}re results needed here for $(\Z,\R)$-polyhedral complexes and piecewise affine functions with rational slopes. 
The key idea is that, after translation, a $(\Z,\R)$-polyhedral complex is locally identified with a rational fan, so the locality of the polyhedral Monge-Amp{\`e}re measure in \cite[Corollary 4.18]{MApoly} allows us to reduce to the rational setting.

For the class of convex functions obtained as uniform limits of the $\Gamma$-rational piecewise affine functions considered here, we further identify the polyhedral Monge-Ampère measure with the Bedford–Taylor product of \cite[Section 4.1]{BGJK21} from tropical pluripotential theory; see \Cref{prop: PS comparison of poly MA and delta form} and \Cref{prop: comparison of poly MA and BT product}

This construction also provides a reason for working on the tropical side. The framework of \cite{MApoly} applies to balanced $(\Z,\Q)$-polyhedral spaces embedded in an ambient real vector space $N_\R$, but the tropical skeleton is not naturally of this form. We therefore describe the non-archimedean Monge-Amp{\`e}re measures after tropical pushforward.

In our setting, let $g_{\trop}:N_\R\to\R$ be the tropical Green function associated with the metric $\lVert\cdot\rVert$. Then $g_{\trop}$ restricts to a function on a weighted representative $C_x$ of the tropical variety $\trop(X^{\an}\cap T^{\an},x)$ of the germ $(X^{\an}\cap T^{\an},x)$ at $x\in\Sigma$, and we denote the resulting polyhedral Monge-Amp{\`e}re measure by
\[
\operatorname{MA}_{\operatorname{poly},C_x}(g_{\trop}).
\]

\paragraph{Main results.}
We are now ready to state our main results. Together with \Cref{prop:MA support}, they give a complete description of the local structure of the non-archimedean Monge-Amp{\`e}re measure for toric metrics on closed subvarieties of toric varieties in terms of the germ and the tropical Green function of the toric metric. This also makes the measure accessible for explicit computation on the tropical side.

Recall that $\Sigma$ is the tropical skeleton of $X^{\an}\cap T^{\an}$ with respect to the map $\trop$. For simplicity, in the following, a local weighted polyhedral chart of $\Sigma$ will always mean one compatible with $\trop$; see \Cref{Section: tropical skeletons} for the precise definition.

\begin{introthm}\label{thm:intro main}
    Let $x\in\Sigma$. Then there exists an open neighborhood $U(x)$ of $x$ and a weighted representative $C_x$ of the germ $\trop(X^{\an}\cap T^{\an},x)$ with support $\trop(U(x))$ such that the following holds.

    For every toric line bundle $L$ on $Y$ endowed with a semipositive toric metric $\lVert\cdot\rVert$ with tropical Green function $g_{\trop}:N_\R\to\R$, we have
    \[
    \left(\trop|_{U(x)}\right)_*\left(c_1(L|_X,\lVert\cdot\rVert)^{\wedge d}|_{U(x)}\right)=\operatorname{MA}_{\operatorname{poly},C_x}\left(g_{\trop}\right).
    \]

    Moreover, let $\Delta$ be an open face containing $x$ in a local weighted polyhedral chart of $\Sigma$ at $x$. Then
    \[
    c_1(L|_X,\lVert\cdot\rVert)^{\wedge d}|_{\Delta\cap U(x)}=(\trop|_{\Delta\cap U(x)})^*(\operatorname{MA}_{\operatorname{poly},C_x}(g_{\trop})|_{\trop(\Delta\cap U(x))}).
    \]
\end{introthm}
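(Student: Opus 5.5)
The plan is to prove the first identity for $\Gamma$-rational piecewise linear metrics via tropical intersection theory, pass to general semipositive metrics by approximation, and then lift the measure to $\Sigma$ using the local affine structure of the chart.

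\textbf{Step 1: neighbourhood and representative.} First I fix the local data, independently of $L$ and the metric. Ducros' theory of tropical skeletons gives, for $x\in\Sigma$, an open neighbourhood $U(x)$ and a local weighted polyhedral chart of $\Sigma$ at $x$ compatible with $\trop$. After shrinking, $\trop(U(x))$ is the support of a $(\Z,\Gamma)$-polyhedral complex $C_x$. I take $C_x$ to be star-shaped around $\trop(x)$, so that after translation it is locally a rational fan. Its weights are the sums, over the finitely many faces of the chart mapping onto a given face, of the degrees of $\trop$ on those faces. Balancedness of $C_x$ then follows from balancing of the tropical variety of the germ $(X^{\an}\cap T^{\an},x)$. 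Choosing $U(x)$ small enough, e.g. a connected component of $\trop^{-1}$ of a small star neighbourhood that meets $\Sigma$ only near $x$, ensures that $\trop|_{U(x)}$ is proper onto its image.

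\textbf{Step 2: the $\Gamma$-rational piecewise linear case.} Suppose the metric is a $\Gamma$-rational piecewise linear toric metric, i.e.\ a model metric coming from a toric model. Then $g_{\trop}$ is $\Gamma$-rational piecewise affine and $c_1(L|_X,\lVert\cdot\rVert)^{\wedge d}$ is a finite sum of Dirac masses at Shilov points of the special fibre of the induced model of $X$. The masses are given by degrees of the restricted line bundle on the components. Pushing forward by $\trop$ and comparing with Gubler's tropical description of local intersection numbers yields the tropical intersection product $g_{\trop}\cdots g_{\trop}\cdot C_x$ (degree zero part). By the paper's extension of the \cite{MApoly} construction to $(\Z,\R)$-complexes, via locality \cite[Corollary 4.18]{MApoly} after translation, this product is $\operatorname{MA}_{\operatorname{poly},C_x}(g_{\trop})$. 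This gives the first identity in this case.

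\textbf{Step 3: approximation.} A general semipositive toric metric is a uniform limit of such metrics with convex $\Gamma$-rational piecewise affine $g_{\trop}$ \cite{BPS}. Two continuity statements are needed. Chambert-Loir measures converge weakly under uniform convergence of semipositive metrics. On the tropical side, the polyhedral Monge-Amp\`ere measure is continuous under uniform limits of the convex functions of the class considered, as in Section~\ref{Section: poly MA}; alternatively I would use the comparison with the Bedford--Taylor product of \cite{BGJK21}. Properness of $\trop|_{U(x)}$ lets the pushforward commute with these weak limits on $U(x)$. I expect the main obstacle to lie here: weak convergence near the boundary of $U(x)$ may leak mass. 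To handle this I would work with test functions compactly supported in $\trop(U(x))$, using that both sides are local.

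\textbf{Step 4: the lift to $\Sigma$.} By Step 1, $\trop$ restricted to an open face $\Delta\cap U(x)$ is an injective integral affine map onto an open part of a face of $C_x$. The measure on $X^{\an}$ is supported on $\Sigma$ by \Cref{prop:MA support}. I would compare it with $\operatorname{MA}_{\operatorname{poly},C_x}(g_{\trop})$ first in the piecewise linear case and then take limits. In the piecewise linear case, the masses on $\Delta$ and on the other faces mapping to the same face of $C_x$ split proportionally to their weights, and these weights are built into the definition of the tropical pullback $(\trop|_\Delta)^*$. The remaining subtle point is checking that the mass distribution among the preimage faces of the measure is proportional to their degrees. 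I would verify this locally on model components via the projection formula.
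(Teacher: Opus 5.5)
There is a genuine gap, and it is in Step 2, not in the approximation step you flag. The tropical description of Chambert-Loir measures you invoke is global. For a toric model metric it gives
\[
\trop_*\bigl(c_1(L|_X,\lVert\cdot\rVert)^{\wedge d}\bigr)\big|_{N_\R}=(\ddc g_{\trop})^{\wedge d}\wedge\delta_{\trop(X^{\an}\cap T^{\an})},
\]
which is a product with the global tropical variety. Near $\trop(x)$ this cycle collects every branch of $\Sigma$ that maps near $\trop(x)$, not just the germ at $x$. In the paper's curve example, for $x\in e_3^\circ$ the global weight on $\{(\alpha+1,0)\}$ is $2$, while the germ weight is $1$. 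Restricting the global identity near $\trop(x)$ therefore does not compute the pushforward of the measure restricted to $U(x)$. Gubler's description of local heights is a statement about all of $X$, so it cannot produce $C_x$.

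What is missing is exactly the local content of the theorem. The paper obtains it from the $\delta$-form calculus. In the model case $c_1(L|_{X\cap T},\lVert\cdot\rVert)^{\wedge d}=\trop^\star(\ddc g_{\trop})^{\wedge d}$. Mihatsch's characterization of $\trop^\star$ is applied to the compact piece $\Sigma(V(x))$ with the finite-fibred map $\trop|_{\Sigma(V(x))}$. This gives $(\trop|_{\Sigma(V(x))})_*\trop^\star((\ddc g_{\trop})^{\wedge d}\wedge\phi)=(\ddc g_{\trop})^{\wedge d}\wedge\phi\wedge\delta_{\trop_*\Sigma(V(x))}$ away from $\trop(\partial\Sigma(V(x)))$. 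The open set $\Omega$ is chosen disjoint from $\trop(\Sigma(V(x))\setminus V(x)^\circ)$, which contains that boundary image. If you want to stay with Dirac masses, you would have to show that each $\deg_{\mathfrak L}(V)$ equals a local tropical intersection number against the germ at $\xi_V$. Nothing in your proposal supplies this.

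Step 4 and the choice of $U(x)$ in Step 1 also need correcting. Nothing has to be split among branches. If every face of $\Sigma(U(x))$ has $x$ in its closure, injectivity of $\trop$ on each closed face of $\mathcal P_x$ through $x$ gives
\[
(\trop|_{\Sigma(U(x))})^{-1}\bigl(\trop(\Delta\cap U(x))\bigr)=\Delta\cap U(x).
\]
Since the measure is supported on $\Sigma$, the second identity then follows from the first by evaluating on Borel subsets of $\Delta\cap U(x)$. Here $(\trop|_{\Delta\cap U(x)})^*$ is plain pullback along a homeomorphism onto its image. The principle you planned to verify, that mass over a common face splits among preimage faces in proportion to their weights, is false on lower-dimensional faces, where the mass depends on the whole germ.

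Your connected-component construction can produce exactly such a bad $U(x)$. Take $\mathbb P^1\setminus\{0,p,1,1+p\}\hookrightarrow\mathbb G_m^2$, $t\mapsto\bigl(t/(t-p),(t-1)/(t-1-p)\bigr)$. The points $\zeta_{0,|p|}$ and $\zeta_{1,|p|}$ both lie in $\Sigma$ and both tropicalize to $0$. The path joining them is contracted to $0$, so every connected component of $\trop^{-1}(\Omega')$ through one contains the other. Their germs are the two coordinate axes, and their masses are the slope jumps of $g_{\trop}$ along different axes. The paper instead takes $U(x)=V(x)^\circ\cap\trop^{-1}(\Omega)$ and proves properness only for $\trop|_{\Sigma(U(x))}$. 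That suffices for your Step 3, since the measures live on $\Sigma$.
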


On the maximal-dimensional faces, the polyhedral Monge-Amp{\`e}re measure reduces to the usual real Monge-Amp{\`e}re measure in the sense of Alexandrov. We write $\operatorname{MA}_\R(f)$ for the real Monge-Amp{\`e}re measure of a convex function $f$. We therefore obtain the following more classical form of the main theorem.

\begin{introcor}\label{cor:intro maximal}
    If in the statement of \Cref{thm:intro main}, the open face $\Delta$ has dimension $d=\operatorname{dim}(X)$, then the previous Monge-Amp{\`e}re measure can be written as
    \[
    c_1(L|_X,\lVert\cdot\rVert)^{\wedge d}|_{\Delta\cap U(x)}= d!\, m_\Delta\, \operatorname{MA}_\R(g_{\trop}|_{\trop(\Delta\cap U(x))}\circ\trop|_{\Delta\cap U(x)}),
    \]
    where $m_\Delta$ is the weight of $\Delta$.
\end{introcor}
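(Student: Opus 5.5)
The corollary should follow from the second formula of \Cref{thm:intro main}, combined with the fact that the polyhedral Monge-Amp{\`e}re measure agrees, on a maximal-dimensional open face, with a multiple of Alexandrov's real Monge-Amp{\`e}re measure. Let $\Delta$ be an open face of dimension $d$ of a local weighted polyhedral chart of $\Sigma$ at $x$. Compatibility of the chart with $\trop$ means that $\trop|_{\Delta\cap U(x)}$ is injective and affine with integral linear part. Its image $\sigma:=\trop(\Delta\cap U(x))$ is therefore an open subset of a $d$-dimensional open face of the weighted representative $C_x$, and this face carries the weight $m_\Delta$. I first check that $\sigma$ lies in the relative interior of a maximal face of $C_x$ and that the weights match. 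Both points follow from how $C_x$ is constructed from the chart: the weight of a maximal face of $C_x$ is the sum of the weights of the faces of $\Sigma$ mapping onto it, and injectivity of $\trop$ on $U(x)$ near $\Delta$ leaves only $\Delta$ in that sum.

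Next I invoke the local identification of $\operatorname{MA}_{\operatorname{poly},C_x}$ on maximal open faces, from the comparison results in \Cref{Section: poly MA}. Via the translation reduction to rational fans and the locality result \cite[Corollary 4.18]{MApoly}, these give
\[
\operatorname{MA}_{\operatorname{poly},C_x}(g_{\trop})|_{\sigma}=d!\,m_\Delta\,\operatorname{MA}_\R(g_{\trop}|_{\sigma}).
\]
Here $\operatorname{MA}_\R$ is computed with respect to the Lebesgue measure on the affine span of $\sigma$, normalized by the lattice $N\cap\operatorname{span}(\sigma-\sigma)$. The factor $d!$ arises from the convention of the tropical intersection product for piecewise affine functions: the $d$-fold product of $\mathrm{div}$ of $\phi$ on a top cell equals $d!$ times the Alexandrov measure for the lattice-normalized volume. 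This identity is first verified for $\Gamma$-rational piecewise affine $g_{\trop}$ and then extended by uniform approximation, using weak continuity of both operators under uniform convergence of convex functions.

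Finally, I substitute this into the second formula of \Cref{thm:intro main}:
\[
c_1(L|_X,\lVert\cdot\rVert)^{\wedge d}|_{\Delta\cap U(x)}=(\trop|_{\Delta\cap U(x)})^*\bigl(d!\,m_\Delta\,\operatorname{MA}_\R(g_{\trop}|_{\sigma})\bigr).
\]
Since $\trop|_{\Delta\cap U(x)}$ identifies $\Delta\cap U(x)$ with $\sigma$ compatibly with the integral affine structures, pulling back commutes with the real Monge-Amp{\`e}re operator. The right-hand side is therefore $d!\,m_\Delta\,\operatorname{MA}_\R(g_{\trop}|_\sigma\circ\trop|_{\Delta\cap U(x)})$, with the real Monge-Amp{\`e}re measure taken in the integral affine coordinates on $\Delta$. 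The main obstacle is this last step together with the normalization: one has to check that the integral affine structure on $\Delta$ from the chart is the one pulled back from $N$ restricted to the span of $\sigma$. Otherwise a lattice index would appear as an extra factor, and the point is that such indices are already absorbed into $m_\Delta$ by the definition of the weights in \Cref{Section: tropical skeletons}.
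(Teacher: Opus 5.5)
Your proposal is correct and follows the paper's route. The paper obtains the corollary in one line from the pullback formula of \Cref{thm:main thm} together with \Cref{prop:TMA on maximal faces} (see also \Cref{rmk: MApoly}); this is exactly your identification $\operatorname{MA}_{\operatorname{poly},C_x}(g_{\trop})|_\sigma=d!\,m_\Delta\,\operatorname{MA}_\R(g_{\trop}|_\sigma)$, proved first for $\Gamma$-rational piecewise affine functions and then extended by uniform approximation. That identification is in \Cref{Section: tropicalized MA} rather than \Cref{Section: poly MA}, and your weight check is the fiber identity (\ref{eq:fiber of Delta cap Ux}).

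Your closing worry about normalization is milder than you suggest: no lattice index has to be absorbed into $m_\Delta$. A compatible chart is by definition one on which $\trop|_\Delta$ is a $\Gamma$-lattice isomorphism onto its image, so the two integral structures agree outright.
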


When $X=Y$, \Cref{cor:intro maximal} recovers the description of the Monge-Amp{\`e}re measure of a semipositive toric metric on a toric variety in \cite[Theorem 4.8.11]{BPS}. Related comparisons on maximal-dimensional open faces of skeleton associated with polystable formal models were obtained by Vilsmeier \cite{Vilsmeier21}; \Cref{cor:intro maximal} gives an analogue for the tropical skeleton associated with the fixed tropicalization considered here.

We further show that, for a special class of semipositive toric metrics, the polyhedral Monge-Amp{\`e}re measure in the main theorem can be replaced by the usual real Monge-Amp{\`e}re measure even on lower-dimensional faces; see \Cref{cor:special class}.

Together with the Bedford-Taylor product comparison above, \Cref{thm:intro main} also gives a precise compatibility, through tropicalization, among three Monge-Amp{\`e}re constructions arising from different viewpoints: the Chambert-Loir construction from algebraic geometry, the polyhedral construction from tropical intersection theory, and the analytic Bedford-Taylor construction from pluripotential theory.

Most of these comparison results concern tropical pushforwards. Our main result goes further: it gives a local tropical pullback description of the original non-archimedean Monge-Amp{\`e}re measure, thus describing its local structure in terms of explicit tropical and convex data. This pullback description is compatible with the corresponding pushforward identities via the projection formula.

Unlike in the abelian case \cite{GubCanonical,GSMongeAmpere}, the tropical cycle in our formula depends on the local germ. The locality of this description arises from two phenomena of tropicalizations.

First, even at the level of a tropical variety, the contribution on a lower-dimensional open face is not determined by the restriction of the tropical Green function to that face alone. Tropical intersection theory also involves the surrounding higher-dimensional faces, their weights, and the transverse behaviour of the tropical Green function. This is illustrated by the counterexample in \Cref{counterexample: lower dimensional face}. Thus, a facewise description requires the weighted tropical variety around the point.

Second, the tropicalization of an arbitrary neighborhood is not an intrinsic local object. Since the tropicalization map on the tropical skeleton is finite-to-one and need not be injective, different local branches may have overlapping images, and the resulting weighted tropical variety may depend on the chosen neighborhood, as illustrated in \Cref{example: different branches}. After shrinking sufficiently around $x$, however, its weighted germ at $\trop(x)$ is independent of further shrinking. This intrinsic local object is the tropicalization of the analytic germ, which explains its role in our main theorem.

There is also a topological difference between the two settings: the support of the canonical measure in the abelian case is compact, while the tropical skeleton of the torus locus in our setting is non-compact.

\paragraph{Main ingredients.}

The main technical tool in our proof is the theory of $\delta$-forms. Building on the theory of real differential forms and currents on Berkovich spaces introduced by Chambert-Loir and Ducros \cite{2Antoine}, Gubler and K{\"u}nnemann introduced $\delta$-forms by combining non-archimedean differential forms with tropical intersection theory \cite{firstDeltaForm}. The resulting formalism simultaneously extends the calculus of differential forms and the intersection theory of tropical cycles: smooth forms occur as $\delta$-forms of codimension zero, while tropical cycles give rise to polyhedral currents, and the wedge product extends the tropical intersection product. In particular, first Chern currents associated with formal metrics can be represented by $\delta$-forms, and their top-degree wedge products recover the corresponding non-archimedean Monge-Amp{\`e}re measures. 

Mihatsch subsequently developed a more flexible tropical intersection formalism \cite{MihatschTIT} and extended the theory of $\delta$-forms from algebraic varieties to good Berkovich spaces using tropical spaces and their skeletons \cite{DeltaLubinTate}. The theory has since found applications to formal intersection theory and Lubin-Tate spaces \cite{DeltaLubinTate}, as well as to tropical formulas for non-archimedean local heights \cite{newArxivGGK}.

A key feature of Mihatsch's formalism is the explicit characterization of the pullback of a $\delta$-form on a tropical space in terms of a linear map with finite fibers; see \cite[(2.40)]{DeltaLubinTate}. In our setting, the required finite-to-one map is provided by the tropicalization itself. More precisely, Ducros \cite{Duc12} shows that the restriction of the map $\trop$ to the tropical skeleton is finite-to-one and, after choosing suitable polyhedral structures, restricts to a linear isomorphism on each face. We may therefore apply Mihatsch's description directly to the tropicalization map. It expresses the pushforward of the relevant $\delta$-form on the tropical skeleton as a tropical intersection product on the tropicalization, while the facewise linear isomorphisms allow us to recover the measure on each individual face. This interplay between the $\delta$-form formalism and the polyhedral structure of the tropical skeleton is the main mechanism behind the proof of our main theorem.

We first prove our results for semipositive toric model metrics, where this $\delta$-form description applies directly. The general case is then obtained by approximation by toric model metrics and the weak continuity of the Monge-Amp{\`e}re measures on both the non-archimedean and polyhedral sides.

\paragraph{Guideline to this paper.}

We begin by fixing notation in \Cref{section:notation} and recalling the necessary background on Berkovich analytic spaces in \Cref{section: berkovich preli}. Toric varieties, tropicalizations, and toric metrics are reviewed in \Cref{Section: Toric preliminaries}, followed in \Cref{Section: tropical skeletons} by the theory of tropical skeletons and the local polyhedral structures used later. In \Cref{section:delta forms preli}, we recall the theory of $\delta$-forms and establish two auxiliary lemmas needed in the proof of the main theorem. \Cref{Section: NA MA measure as delta current} recalls non-archimedean Monge-Amp{\`e}re measures and establishes an approximation by toric model metrics, describes the model case in terms of $\delta$-forms, and proves that the measure on the torus locus is supported on the tropical skeleton. In \Cref{Section: poly MA}, we recall the polyhedral Monge-Amp{\`e}re theory of \cite{MApoly}, develop the $\Gamma$-rational version needed here, and verify that it applies to the tropical Green functions arising in our setting. \Cref{Section: tropicalized MA} studies tropicalized Monge-Amp{\`e}re measures in the language of $\delta$-forms, establishes the real Monge-Amp{\`e}re description on maximal-dimensional open faces for arbitrary convex functions, and discusses both the obstruction on lower-dimensional faces and a special class of metrics for which a real Monge-Amp{\`e}re description remains available. Finally, \Cref{Section: main thm} contains the proof of our main theorem and its consequences.

\paragraph{Acknowledgement.}
I am very grateful to my PhD supervisor Walter Gubler for suggesting the topic of this paper, for many helpful weekly discussions throughout the project, and for his careful reading of earlier drafts. I would also like to thank my co-supervisor Roberto Gualdi for valuable discussions and guidance. I thank L{\'e}onard Pille-Schneider for a helpful conversation about polyhedral Monge-Amp{\`e}re measures.

\section{Notation and conventions}\label{section:notation}

\begin{enumerate}[label=(2.\arabic*)]
    \item Let $K$ be an algebraically closed complete field with a non-trivial non-archimedean absolute value $|\cdot |$ and the corresponding valuation $v=-\operatorname{log}|\cdot|$, let $\Gamma\coloneqq v(K^\times)$ be the value group and let $K^\circ$ be the valuation ring of $K$. 

    \item A \textit{variety} over $K$ is an integral separated scheme of finite type over $\operatorname{Spec}K$.

    \item An \textit{analytic space} over $K$ is a Hausdorff good $K$-analytic space in the sense of Berkovich.
    
    \item Let $N\cong\Z^n$ and $M\coloneqq N^\vee=\operatorname{Hom}(N,\Z)$. Let $N_\R$ (resp. $M_\R$) be the $\R$-vector space generated by $N$ (resp. $M$). 

    \item All fans considered in this paper are rational, i.e. their cones are rational polyhedral cones.

    \item A subset $C\subset N_\R$ is called a \textit{polyhedral set} if it is a locally finite union of polyhedra.
    
    \item\label{convention:normalization of MA} For every convex function $f:N_\R\to\R$, let $\operatorname{MA}_\R(f)$ be the \textit{real Monge-Amp{\`e}re measure} defined by
    \[
    \operatorname{MA}_\R(f)(E)\coloneqq\operatorname{Vol}(\partial f(E))
    \]
    for a Borel set $E$ of $N_\R$, where $\partial f(E)$ is the \textit{sub-differential} of $f$ at $E$: for every point $u\in N_\R$, we set
    \[
    \partial f(u)\coloneqq\left\{ x\in M_\R:\langle x,v-u \rangle\leq f(v)-f(u)\text{ for all }v\in N_\R \right\}
    \]
    and
    \[
    \partial f(E)\coloneqq \bigcup_{u\in E}\partial f(u).
    \]
    Let $\sigma$ be a face of a tropical cycle in $N_\R$. Let
    \[
    N_{\sigma,\R}=\operatorname{span}_\R\{x-y:x,y\in\sigma\}\quad\text{and}\quad N_\sigma=N\cap N_{\sigma,\R}.
    \]
    We denote by $\operatorname{MA}_\R(f|_\sigma)$ the real Monge-Amp{\`e}re measure computed with the integral structure induced by $N_\sigma$.

    \item Let $f:N_\R\to\R$ be a \textit{piecewise affine function} on $N_\R$. Then there is a complete polyhedral complex $\Pi$ in $N_\R$ such that for every $\sigma\in\Pi$ we have
    \[
    f|_\sigma(x)=\langle m_\sigma,x \rangle +\ell_\sigma
    \]
    with some $(m_\sigma,\ell_\sigma)\in M_\R\times\R$. We say $f$ has \textit{rational slopes} if $m_\sigma\in M_\Q$ for every $\sigma\in\Pi$. We say $f$ is $\Gamma$-\textit{lattice} if $(m_\sigma,\ell_\sigma)\in M\times\Gamma$ for every $\sigma\in\Pi$. If $af$ is $\Gamma$-lattice for some positive integer $a$, then we call $f$ a $\Gamma$-\textit{rational} function.

    \item We say a polyhedron $\Delta$ in $N_\R$ is $\Gamma$-\textit{rational} if there are $m_i\in M$ and $\ell_i\in\Gamma$ such that
    \[
    \Delta=\bigcap_{i=1}^r\{u\in N_\R:\langle m_i,u\rangle\geq\ell_i\}.
    \]
    If a polyhedral complex consists only of $\Gamma$-rational polyhedra, we say it is a $(\Z,\Gamma)$-polyhedral complex. The notions of a $(\Z,\Q)$-polyhedral complex and a $(\Z,\R)$-polyhedral complex are defined similarly, allowing $\ell_i\in\Q$ and $\ell_i\in\R$, respectively.

    \item Let $C\subset N_\R$ be a pure $d$-dimensional weighted $(\Z,\R)$-polyhedral complex, where every $d$-dimensional face $\sigma$ of $C$ has weight $m_\sigma$. For a codimension-one face $\tau$ of $C$ and a $d$-dimensional face $\sigma\supset\tau$, let $u_{\sigma/\tau}$ denote the primitive generator of $N_\sigma/N_\tau$ pointing from $\tau$ into $\sigma$. We say that $C$ is \textit{balanced} if
    \[
    \sum_{\sigma\supset\tau} m_\sigma u_{\sigma/\tau}=0\quad\text{in }N/N_\tau
    \]
    for every codimension-one face $\tau$.

    \item A $d$-dimensional \textit{tropical cycle} in $N_\R$ is an equivalence class of balanced weighted $(\Z,\R)$-polyhedral complexes, where two such complexes are equivalent if they admit a common weighted refinement. If it can be represented by a balanced weighted $(\Z,\R)$-polyhedral complex with positive weights, we call it a \textit{tropical variety}.

    \item For another lattice $N'$ with associated $\R$-vector space $N'_\R$, we say a function $f:N_\R'\to N_\R$ is $\Gamma$-lattice if the pullback of every $\Gamma$-lattice affine function on $N_\R$ is a $\Gamma$-lattice affine function on $N_\R'$. If furthermore $f$ is bijective and its inverse is again a $\Gamma$-lattice affine map, we call $f$ a $\Gamma$-lattice isomorphism.

    \item Recall that every polyhedron $\sigma$ of an abstract polyhedral complex is in a real vector space $\R^{n_\sigma}$. An abstract polyhedral complex is called an abstract $(\Z,\Gamma)$-polyhedral complex if furthermore the induced affine map $\R^{n_\tau}\to\R^{n_\sigma}$ is $\Gamma$-lattice.

    A $\Gamma$-integral affine structure is given by the polyhedra of a $(\Z,\Gamma)$-abstract polyhedral complex, well-defined up to $(\Z,\Gamma)$-subdivisions.
\end{enumerate}

\section{Analytic spaces}\label{section: berkovich preli}

By \textit{analytic space over} $K$, or simply an \textit{analytic space}, we mean a Hausdorff \textit{good} $K$-analytic space as defined in \cite{Ber93}. These are analytic spaces in the sense of Berkovich \cite{Ber90}. An analytic space $\mathcal{X}$ is \textit{good} if every $x\in\mathcal{X}$ has a neighborhood isomorphic to the Berkovich spectrum $\mathcal{M}(\mathcal{A})$ for an affinoid algebra $\mathcal{A}$. A good analytic space is called \textit{strictly analytic} if these affinoid algebras can be chosen strictly affinoid. Analytic domains of this form are called \textit{strictly affinoid domains}. Throughout this paper, strictly analytic spaces are endowed with the $G$-topology generated by strictly affinoid domains \cite[Section 1.3]{Ber93}.

Every algebraic variety $X$ over $K$ has a Berkovich analytification $X^{\an}$. We first define it in the affine case. If $X=\operatorname{Spec}(A)$, with $A$ a finitely generated $K$-algebra, then $X^{\an}$ is the space of multiplicative seminorms of $A$ extending the absolute value $|\cdot|$ of $K$, endowed with the topology generated by the functions $x\mapsto|a(\cdot)|$ for all $a\in A$. One can define a sheaf of analytic functions on $X^{\an}$ as in \cite[Section 1.5 and Remark 3.4.2]{Ber90} to get a locally ringed affine analytic space. In general, the analytification $X^{\an}$ is defined by gluing the affine analytic spaces. It follows from \cite[Section 3.4]{Ber90} that $X^{\an}$ is a strictly analytic space. As we mentioned above, we will use the $G$-topology on $X^{\an}$ generated by strictly affinoid domains.

Let $L$ be a line bundle on $X$. A metric $\lVert\cdot\rVert$ on $L^{\an}$ is called a \textit{model metric} if it is induced by a proper model $(\mathfrak{X},\mathfrak{L},e)$ of $(X,L)$, see for example \cite[Definition 1.3.5]{BPS}. We say a model metric is \textit{nef} if $\mathfrak{L}$ can be chosen as a nef line bundle. A continuous metric on $L^{\an}$ is called \textit{semipositive} if it is a uniform limit of nef model metrics on $L$.

\section{Toric varieties and tropicalizations}\label{Section: Toric preliminaries}

\paragraph{Basic toric notation.} We briefly recall the toric notation that will be used later. For a general reference on toric varieties, we refer to \cite{Ful93}.

Let $T=\operatorname{Spec}K[M]$ be an algebraic torus. A \textit{toric variety} $Y$ over $K$ is a normal variety over $K$ which contains $T$ as a dense open subset, such that the multiplication on $T$ extends to an action of $T$ on $Y$.

The geometry of toric varieties is encoded by convex geometry. Every strongly convex rational polyhedral cone $\sigma\subset N_\R$ gives us an affine toric variety $Y_\sigma=\operatorname{Spec}K[M\cap\sigma^\vee]$, and every affine toric variety with torus $T$ arises in this way. Gluing those affine toric varieties along the common faces, we have a bijection $\Sigma\mapsto Y_\Sigma$ between the set of fans in $N_\R$ and the set of isomorphism classes of toric varieties with torus $T$. If $\Sigma$ is complete, i.e., $|\Sigma|=\cup_{\sigma\in\Sigma}\sigma=N_\R$, then $Y_\Sigma$ is proper.

Let $\Sigma$ be a fan in $N_\R$ and let $Y=Y_\Sigma$ be its corresponding toric variety. The correspondence $\psi\mapsto D_\psi$ gives a bijection between integral piecewise linear support functions on $\Sigma$ and $T$-invariant Cartier divisors on $Y_\Sigma$. Equivalently, via $D\mapsto\left(O(D),s_D\right)$, such support functions are in bijection with toric line bundles equipped with a toric section.

\paragraph{Tropicalizations.} Let $\Sigma$ be a fan in $N_\R$ and let $Y=Y_\Sigma$ be the associated toric variety. For $\sigma\in\Sigma$, we set $N(\sigma)\coloneqq N_\R/\langle\sigma\rangle_\R$, where $\langle\sigma\rangle_R$ is the $\R$-linear space generated by $\sigma$. We also put $N_\sigma\coloneqq\coprod_{\tau\prec\sigma}N(\tau)$. The topology on $N_\sigma$ is described in \cite[Section I.1]{topologyOfNsigma}, which also contains a helpful illustration. Globally, we consider $N_\Sigma=\coprod_{\sigma\in\Sigma}N(\sigma)$ whose topology is obtained by gluing the spaces $N_\sigma$. Note that $N_\R$ is a dense subset in $N_\Sigma$.

By \cite{Pay}, we have a tropicalization map
\[
\trop:Y_\Sigma^{\an}\to N_\Sigma
\]
which is locally given by
\begin{align*}
    \trop_\sigma:Y_\sigma^{\an}&\to N_\sigma=\operatorname{Hom}(S_\sigma,\R\cup\{\infty\})\\
    p&\mapsto (u\mapsto -\operatorname{log}|p(\chi^u)|).
\end{align*}
As shown in \cite[Section 4.1]{BPS}, there exists a canonical section $\iota:N_\Sigma\to Y_\Sigma^{\an}$ of $\trop$ that is a homeomorphism onto a closed subset of $Y_\Sigma$. Therefore, we call $N_\Sigma$ the canonical skeleton of $Y_\Sigma$.

The following fact allows us to obtain strictly affinoid domains from polyhedral subsets of the tropicalization. Let $\Delta$ be a $\Gamma$-rational polytope in $N_\R$. Then the preimage $\trop^{-1}(\Delta)$ under the tropicalization map is a strictly affinoid domain by \cite[Lemma 6.21]{guide}, which is a corollary of \cite[Proposition 6.9]{JRabTropAnalytic}.

\paragraph{Toric metrics.}
Let $L=\mathcal{O}(D)$ be a toric line bundle on $Y$, with canonical toric section $s_D$. We now recall the notion of \textit{toric metrics} on $L^{\an}$, following \cite[Section 4]{BPS} in the case where $v$ is discrete and \cite[Section 2.5]{ToricMetricNonDiscrete} in the general case.

We denote the support function corresponding to $D$ by $\psi$. Let $\lVert\cdot\rVert$ be a semipositive toric metric on $L^{\an}$. Then there exists a continuous \textit{tropical Green function} $g_{\trop}:N_\R\to\R$ such that $g_{\trop}\circ\trop=-\operatorname{log}\lVert s_D\rVert$. By \cite[Theorem 4.8.1]{BPS} and \cite[Theorem 2.5.8]{ToricMetricNonDiscrete}, the function $g_{\trop}$ is a convex function such that $|g_{\trop}+\psi|$ is bounded.  If furthermore $\lVert\cdot\rVert$ is a semipositive toric model metric, then it follows from \cite[Theorem 4.5.10]{BPS} and \cite[Proposition 2.5.5]{ToricMetricNonDiscrete} that $g_{\trop}$ is a $\Gamma$-rational piecewise affine function. 
Moreover, by \cite[Remark 4.5.8 and Corollary 4.5.9]{BPS} and \cite[Remark 2.5.9]{ToricMetricNonDiscrete}, this gives us a bijection between semipositive toric model metrics $\lVert\cdot\rVert$ and convex $\Gamma$-rational piecewise affine tropical Green functions $g_{\trop}$ such that $|g_{\trop}+\psi|$ is bounded.

\section{Tropical skeletons}\label{Section: tropical skeletons}

In this section, we denote by $\mathbb{G}_m^r$ the $r$-dimensional analytic torus. Let $\mathcal{X}$ be an analytic space over $K$ of dimension $d$ and $f:\mathcal{X}\to\mathbb{G}_m^n$ be a morphism. Then we have a tropicalization map
\[
\trop_f=\trop\circ f:\mathcal{X}\to\R^n.
\]
We introduce Ducros' tropical skeletons \cite{Duc12} in this section, as our Monge-Amp{\`e}re measures will be supported on the tropical skeletons. 

Write $f=(f_1,\ldots,f_n)$. First we consider the closed subspace
\[
\Sigma'(\mathcal{X},f)\coloneqq\bigcup_{1\leq i_1,\ldots,i_d\leq n}(f_{i_1},\ldots,f_{i_d})^{-1}\Sigma(\mathbb{G}_m^d),
\]
where $\Sigma(\mathbb{G}_m^d)$ is the standard skeleton consisting of weighted Gauss norms. There is also an alternative way to describe $\Sigma'(\mathcal{X},f)$ as the collection of some Abhyankar points on $\mathcal{X}$. We refer to \cite[(0.13)]{Duc12} for this equivalent definition.

It follows from \cite[Theorem 5.1]{Duc12} that $\Sigma'(\mathcal{X},f)$ has a canonical structure of a piecewise linear space of dimension $\leq d$, i.e. it is Hausdorff and locally isomorphic to some polyhedral subsets $C\subset\R^s$ together with piecewise affine functions on $C$. See \cite[Section 2.1]{DeltaLubinTate} for the precise definition of piecewise linear spaces and piecewise linear maps. In particular, if $\trop_f$ is proper, then the restriction of $\trop$ to $\Sigma'(\mathcal{X},f)$ defines a piecewise linear map
\[
\trop_f|_{\Sigma'(\mathcal{X},f)}:\Sigma'(\mathcal{X},f)\to\trop_f(\mathcal{X}).
\]

We define the tropical skeleton $\Sigma(\mathcal{X},f)$ to be the $d$-dimensional locus of $\Sigma'(\mathcal{X},f)$. One can endow $\Sigma(\mathcal{X},f)\backslash\partial\mathcal{X}$ with the structure of a \textit{tropical space} by \cite[Corollary 3.11(2)]{DeltaLubinTate}, where a tropical space is a weighted piecewise linear space with \textit{linear functions} satisfying some balancing conditions \cite[Definition 2.34 and Corollary 2.35]{DeltaLubinTate}. Here the term ``linear function'' refers to Mihatsch's sheaf of linear functions \cite[Definition 2.25]{DeltaLubinTate}, rather than to the full sheaf of piecewise linear functions on the underlying piecewise linear space. Thus, although $\trop_f|_{\Sigma'(\mathcal{X},f)}$ is a piecewise linear map on the level of piecewise linear spaces, it is also a linear map on the tropical space $\Sigma(\mathcal{X},f)\backslash\partial\mathcal{X}$ \cite[Definition 3.8(3)]{DeltaLubinTate}. 

If $\trop_f(\mathcal{X})$ is of pure dimension $d$, then
\begin{equation}\label{eq:trop on skeleton which is surj}
    \trop_f|_{\Sigma(\mathcal{X},f)}:\Sigma(\mathcal{X},f)\to\trop_f(\mathcal{X})
\end{equation}
is surjective by \cite[(3.8)]{DeltaLubinTate}.

\paragraph{Tropical skeletons on compact analytic spaces.}
Recall that a $\Gamma$-\textit{integral affine structure} on a space $\mathcal{X}$ is an equivalence class, up to $(\Z,\Gamma)$-subdivision, of homeomorphisms $|\Pi|\cong\mathcal{X}$, where $\Pi$ is a locally finite abstract $(\Z,\Gamma)$-polyhedral complex.

If $\mathcal{X}$ is a compact strictly analytic space of pure dimension $d$, then as stated in \cite[Remark 3.20]{TropicalLinearIso}, we have a unique way to endow $\Sigma(\mathcal{X},f)$ with a $\Gamma$-integral affine structure such that $\trop_f|_{\Sigma'(\mathcal{X},f)}$ restricts to a $\Gamma$-lattice isomorphism on every face: following \cite[8.15]{TropicalLinearIso}, after subdivisions, we can take an abstract $(\Z,\Gamma)$-polyhedral complex with support $\Sigma(\mathcal{X},f)$ and a $(\Z,\Gamma)$-polyhedral complex $\Pi$ with support $\trop_f(\mathcal{X})$ such that the following holds. 
\begin{itemize}
    \item For any $d$-dimensional face $\Delta$ of $\Sigma(\mathcal{X},f)$, let $\sigma\coloneqq\trop_f(\Delta)$. Then $\sigma$ is a face of $\Pi$ and $\trop_f|_\Delta:\Delta\xrightarrow{\sim}\sigma$ is a $\Gamma$-lattice isomorphism.
    \item The tropical skeleton $\Sigma(\mathcal{X},f)$ admits the structure of a weighted abstract polyhedral complex: for every $d$-dimensional face $\Delta$ of $\Sigma(\mathcal{X},f)$, we can take $x\in\operatorname{relint}(\Delta)$ and set the weight of $\Delta$ to be $m_\Delta\coloneqq m_{\trop}(\mathcal{X},x)$ the tropical weight at $x$ given in \cite[Remark 4.13]{TropicalLinearIso}, which is well-defined as explained in \cite[Section 4]{TropicalLinearIso}.
    \item We can endow $\Pi$ with the structure of a weighted polyhedral complex, where the weight of a maximal-dimensional face $\sigma$ is denoted by $m_\sigma$. More precisely, the weights are given by push-forward, i.e.
    \[
    (\trop_f)_*(\Sigma(\mathcal{X},f))=\trop_f(\mathcal{X})
    \]
    and
    \begin{equation}\label{eq:local weights pushforward}
        m_\sigma=\sum_{\substack{\Delta\text{ a }d\text{-face of }\Sigma(\mathcal{X},f)\\ \trop_f(\Delta)=\sigma}}m_\Delta.
    \end{equation}
\end{itemize}  

\paragraph{Tropical skeletons of closed subvarieties of tori.}\label{subsection:local charts of tropical skeletons}

In our case, we are mainly interested in the case that $\mathcal{X}$ is the Berkovich analytification of a closed subvariety of the torus $T$. Let $X$ be an algebraic variety of dimension $d$ over $K$ with a closed embedding $\phi:X\hookrightarrow T$. Let $X^{\an}$ and $T^{\an}$ be the Berkovich analytifications of $X$ and $T$. Let $\phi^{\an}:X^{\an}\hookrightarrow T^{\an}$ be the corresponding morphism between analytic spaces. It follows from \cite[Theorem 3.4.1]{Ber90} that $\partial X^{\an}=\varnothing$. By \cite[Proposition 3.7]{DeltaLubinTate}, we have that $\Sigma'(X^{\an},\phi^{\an})$ is purely of dimension $d$ and thus $\Sigma'(X^{\an},\phi^{\an})=\Sigma(X^{\an},\phi^{\an})$. In this case, the map $\trop_{\phi^{\an}}$ is proper and hence we have a piecewise linear map
\[
\trop_{\phi^{\an}}|_{\Sigma(X^{\an},\phi^{\an})}:\Sigma(X^{\an},\phi^{\an})\to\trop_{\phi^{\an}}(X^{\an}).
\]

Note that in our case $X$ is not necessarily proper and thus $X^{\an}$ is not necessarily compact. We shall work locally on the tropical skeleton $\Sigma(X^{\an},\phi^{\an})$.

A \textit{local} $(\Z,\Gamma)$\textit{-polyhedral chart of} $\Sigma(X^{\an},\phi^{\an})$ \textit{compatible with} $\trop_{\phi^{\an}}$ is a weighted abstract $(\Z,\Gamma)$-polyhedral complex $\mathcal{P}$ such that
\begin{itemize}
    \item $|\mathcal{P}|$ is a compact subset of $|\Sigma(X^{\an},\phi^{\an})|$ with pure dimension $d$, 
    \item there exists a $(\Z,\Gamma)$-polyhedral complex $\Pi$ with support $\trop_{\phi^{\an}}(\mathcal{P})$ such that, for any $d$-face $\Delta$ of $\mathcal{P}$ and $\sigma\coloneqq\trop_{\phi^{\an}}(\Delta)$, we have $\sigma$ is a face of $\Pi$ and $\trop_{\phi^{\an}}:\Delta\xrightarrow{\sim}\sigma$ is a $\Gamma$-lattice linear isomorphism, and
    \item every $d$-face of $\mathcal{P}$ is endowed with the weight introduced by the weighted tropical skeleton $\Sigma(X^{\an},\phi^{\an})$.
\end{itemize}
If $\mathcal{P}$ is a compact neighborhood of $x$, then we say that $\mathcal{P}$ is a \textit{local} $(\Z,\Gamma)$\textit{-polyhedral chart of} $\Sigma(X^{\an},\phi^{\an})$ \textit{at} $x$ \textit{compatible with} $\trop_{\phi^{\an}}$.

For every $x\in\Sigma(X^{\an},\phi^{\an})$, there exists a local $(\Z,\Gamma)$-polyhedral chart of $\Sigma(X^{\an},\phi^{\an})$ at $x$ compatible with $\trop_{\phi^{\an}}$. 
Indeed, let $\{\Delta_i\}_{i\in\N_{>0}}$ be a countable covering of $N_\R$ by full-dimensional $\Gamma$-rational polytopes such that $\Delta_i\subset\Delta_{i+1}$ for all $i$. Then $V_i\coloneqq\trop_{\phi^{\an}}^{-1}(\Delta_i)$ is a compact strictly affinoid space over $K$. By the above discussion of \cite[8.15]{TropicalLinearIso}, we have that $\{\Sigma(V_i,\phi^{\an}|_{V_i})\}_{i\in\N_{>0}}$ is a covering of $\Sigma(X^{\an},\phi^{\an})$ by local $(\Z,\Gamma)$-polyhedral charts compatible with $\trop_{\phi^{\an}}$. Therefore, there exists an $i\in\N_{>0}$ such that $\Sigma(V_i,\phi^{\an}|_{V_i})$ is a local $(\Z,\Gamma)$-polyhedral chart of $\Sigma(X^{\an},\phi^{\an})$ at $x$ compatible with $\trop_{\phi^{\an}}$.

\begin{rmk}
    One may also ask whether the whole tropical skeleton $\Sigma(X^{\an},\phi^{\an})$ admits the structure of a locally finite abstract $(\Z,\Gamma)$-polyhedral complex. It is natural to expect such a structure by modifying $\Sigma(V_i,\phi^{\an}|_{V_i})$ above and then passing to compatible subdivisions. We do not pursue this global statement here, since our description of Monge-Amp{\`e}re measures will be local and the local charts considered above are sufficient for our purpose.
\end{rmk}

\section{Delta-forms and delta-currents}\label{section:delta forms preli}

This section collects some preliminaries on $\delta$-forms and $\delta$-currents, since we will study the Monge-Amp{\`e}re measures as $\delta$-currents. 

\paragraph{Delta-forms.}
Gubler and K{\"u}nnemann introduced $\delta$-forms on the non-archimedean analytification of an algebraic variety, by extending the space of real-valued differential forms on the Berkovich space \cite{firstDeltaForm}. Mihatsch subsequently developed a theory of tropical intersection in which $\delta$-forms on Euclidean spaces play a central role \cite{MihatschTIT}, and extended this formalism to analytic spaces without boundary \cite{DeltaLubinTate}. 

The resulting theory of $\delta$-forms simultaneously generalizes tropical cycles and the theory of real differential forms developed by Lagerberg \cite{Lagerberg}. From the tropical side, it admits a $\wedge$-product extending the tropical intersection product. From the differential-form side, it admits the differentials $\mathrm d'$ and $\mathrm d''$.

By Mihatsch's definition \cite[Definition 4.2]{DeltaLubinTate}, every $\delta$-form is supported on a \textit{skeleton}, which is locally a closed piecewise linear space of some tropical skeletons. As introduced in \Cref{Section: tropical skeletons}, in a boundary-less analytic space, every tropical skeleton is a tropical space. Hence, let us recall the definition of $\delta$-forms on tropical spaces following \cite[Section 2.7]{DeltaLubinTate}.

First of all, a $\delta$-form on $\R^r$ is a polyhedral current satisfying balancing conditions \cite[Section 3]{MihatschTIT}.

Let $X$ be a tropical space. Then a $\delta$-form on $X$ is locally of the form $f^\star\gamma$, where $f$ is a linear function to $\R^r$ and $\gamma$ is a $\delta$-form on $\R^r$. An explicit description of $f^\star\gamma$ is given in \cite[Proposition 2.38]{DeltaLubinTate}: if $f:X\to\R^r$, then $f^\star\gamma$ is the unique polyhedral current on $X$ such that for all compact polyhedral subset $C$ of $X$ and all functions $g:C\to\R^s$ with the properties that
\begin{itemize}[itemsep=0pt]
    \item $g$ has finite fibers, 
    \item there exists an affine linear map $p:\R^s\to\R^r$ such that $f=p\circ g$,
\end{itemize}
we have the following identity:
\begin{equation}\label{eq: pullback description}
    g_*\left( (f^\star\gamma)\big|_{C\backslash g^{-1}(g(\partial C))} \right)=g_*(C)\wedge p^*\gamma,
\end{equation}
away from $g(\partial C)$, where $\partial C$ denotes the boundary of $C$ in $X$. 

We emphasize that the weights of the tropical space $X$ are already encoded in the definition of $f^\star\gamma$. 

Suppose $X$ is an analytic space with $\partial X=\varnothing$ of pure dimension. Then a $\delta$-form on $X$ is specified by local data consisting of $\delta$-forms on tropical skeletons that are compatible on overlaps. As in \cite[Definition 4.2]{DeltaLubinTate}, we write $\trop_f^\star(\gamma)$ for the $\delta$-form $\Sigma\mapsto\trop_f^\star(\gamma)|_\Sigma$, where $\Sigma$'s are skeletons contained in an open subset $U$ of $X$ and the $\delta$-form is given by $f:U\to\mathbb{G}_m^r$ and a $\delta$-form $\gamma$ on $\R^r$.

We discuss the restriction of $\delta$-forms to polyhedral subsets.
The next lemma shows that $f^\star$ commutes with restriction to open polyhedral subsets.

\begin{lem}[Restriction Lemma]\label{lem:restriction}
    Let $X$ be a tropical space. Let $f:X\to\R^r$ be a linear map and $\gamma$ be a $\delta$-form on $\R^r$. Then 
    \[
    (f^\star\gamma)|_\Delta=(f|_\Delta)^\star(\gamma)
    \]
    for every open polyhedral subset $\Delta$ of $X$.
\end{lem}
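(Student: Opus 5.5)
The plan is to use the uniqueness part of the characterization \eqref{eq: pullback description} of $f^\star\gamma$ from \cite[Proposition 2.38]{DeltaLubinTate}. Here $\Delta$ is an open polyhedral subset of $X$, so it carries the induced tropical space structure: same weights and same sheaf of linear functions. Hence $f|_\Delta$ is a linear map, and $(f|_\Delta)^\star\gamma$ is the unique polyhedral current on $\Delta$ satisfying \eqref{eq: pullback description} for all admissible test data $(C,g,p)$ with $C\subset\Delta$. It therefore suffices to show that $(f^\star\gamma)|_\Delta$ satisfies the same identities.

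First, I would check that $(f^\star\gamma)|_\Delta$ is a polyhedral current on $\Delta$. Polyhedral currents are given by weighted polyhedral pieces with attached forms, so restricting to an open subset makes sense. Next, take a compact polyhedral subset $C\subset\Delta$, a map $g:C\to\R^s$ with finite fibers, and an affine $p$ with $f|_\Delta=p\circ g$ on $C$. Since $C$ is also a compact polyhedral subset of $X$, the triple $(C,g,p)$ is admissible for $f$ on $X$. The identity \eqref{eq: pullback description} for $f^\star\gamma$ then gives
\[
g_*\bigl((f^\star\gamma)|_{C\setminus g^{-1}(g(\partial_X C))}\bigr)=g_*(C)\wedge p^*\gamma
\]
away from $g(\partial_X C)$.

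The main point to check is that the boundary of $C$ in $X$ coincides with its boundary in $\Delta$. This holds because $\Delta$ is open in $X$ and $C\subset\Delta$, so the relative interiors used in the definition agree. The weighted polyhedral set $g_*(C)$ is also the same in both settings, since the weights on $\Delta$ are restricted from $X$. Finally, restricting to $\Delta$ and then to $C$ is the same as restricting directly to $C$. Together these show that $(f^\star\gamma)|_\Delta$ satisfies every defining identity for $(f|_\Delta)^\star\gamma$, and uniqueness gives the equality.

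The main obstacle I expect is purely about definitions: the boundary $\partial C$ and the admissibility of $g$ must be intrinsic, that is, unaffected by passing from $X$ to the open subset $\Delta$. A second point is that \cite[Proposition 2.38]{DeltaLubinTate} determines the current uniquely from these local identities, which I will take from the cited characterization.
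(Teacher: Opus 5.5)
Your proposal is correct and follows the paper's proof essentially step by step: test data $(C,g,p)$ on $\Delta$ are also test data on $X$, restricting to $\Delta$ and then to $C$ equals restricting to $C$, and the boundaries $\partial_X C$ and $\partial_\Delta C$ coincide. For that last point, the paper makes explicit a step you skip: since $\Delta$ is open, $\partial_\Delta C=\Delta\cap\partial_X C$, and since $C$ is compact, hence closed in the Hausdorff space $X$, we have $\partial_X C\subset C\subset\Delta$, which gives equality.
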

\begin{proof}
    Since $\Delta$ is an open polyhedral subset of $X$, then $\Delta$ is also a tropical space, by \cite[Example 2.26(2) and Corollary 2.35]{DeltaLubinTate}. Therefore, we have that $(f|_\Delta)^\star$ is well-defined.

    We are going to use (\ref{eq: pullback description}) to get an explicit description of $(f^\star\gamma)
    |_\Delta$. Suppose now that $H$ is a compact polyhedral subset of $\Delta$. Let $g:H\to\R^s$ be a map with finite fibers such that $f|_H=q\circ g$ for an affine function $q:\R^s\to\R^r$. As $H$ is also a compact polyhedral subset of $X$, it follows from (\ref{eq: pullback description}) that
    \[
    g_*\left( (f^\star\gamma)\big|_{H\backslash g^{-1}(g(\partial H))} \right)=g_*(H)\wedge q^*\gamma
    \]
    away from $g(\partial_X H)$, where $\partial_X H$ is the boundary of $H$ in $X$. Since $H\subset\Delta$, we have
    \[
    \big( (f^\star\gamma)|_\Delta \big)\big|_{H\backslash g^{-1}(g(\partial H))}=(f^\star\gamma)\big|_{H\backslash g^{-1}(g(\partial H))}.
    \]
    Thus,
    \[
    g_*\left( \big( (f^\star\gamma)|_\Delta \big)\big|_{H\backslash g^{-1}(g(\partial H))} \right)=g_*(H)\wedge q^*(\gamma).
    \]
    The right-hand side of this formula is exactly the explicit description (\ref{eq: pullback description}) of $(f|_\Delta)^\star(\gamma)$, as $f|_H=(f|_\Delta)|_H$. 
    
    Let $\partial_\Delta H$ denote the boundary of $H$ in $\Delta$. It remains to show that $\partial_X H=\partial_\Delta H$. Indeed, since $\Delta$ is open in $X$, we have
    \[
    \partial_\Delta H=\Delta\cap\partial_X H.
    \]
    Since $H$ is compact in the Hausdorff space $X$, we have that $H$ is closed in $X$ and thus
    \[
    \partial_X H\subset \overline{H}^X=H\subset\Delta,
    \]
    which implies that $\partial_X H=\partial_\Delta H$.
\end{proof}

We are particularly interested in the case where $\gamma=\ddc\phi$ for a piecewise affine function $\phi$ on $N_\R$, as such forms arise in the definition of the first Chern $\delta$-forms used in the next sections.

\begin{lem}\label{lem:f star ddc}
    Let $X$ be an analytic space with $\partial X=\varnothing$, purely of some dimension $n$. Let $f:X\to\mathbb{G}_m^r$ be a morphism and $\phi$ be a piecewise affine function on $\R^r$. Then
    \[
    \trop_f^\star(\ddc\phi)=\ddc(\phi\circ\trop_f).
    \]
\end{lem}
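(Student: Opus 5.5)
The statement compares two $\delta$-forms on $X$. Since $\delta$-forms on $X$ are determined by their restrictions to skeletons contained in open subsets, it suffices to check the identity locally, on every tropical skeleton $\Sigma$ of an open $U\subset X$. On such a skeleton, the left-hand side is by definition $(\trop_f|_\Sigma)^\star(\ddc\phi)$. The right-hand side is the $\delta$-form $\ddc$ applied to the piecewise linear function $\phi\circ\trop_f$. I would take $\ddc$ on $\delta$-forms on $X$ to be defined via tropical charts in \cite[Section 4]{DeltaLubinTate}, so that $\ddc(\phi\circ\trop_f)$ is locally $h^\star(\ddc\tilde\phi)$ for some chart $h:U\to\mathbb{G}_m^s$ and piecewise affine $\tilde\phi$ with $\phi\circ\trop_f=\tilde\phi\circ\trop_h$. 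Replacing $h$ by the product chart $(f,h)$ and using that pullback is functorial for the projection $\R^{r+s}\to\R^r$, I may assume $h=f$ and $\tilde\phi=\phi$. The claim then reduces to the tropical-space identity
\[
(\trop_f|_\Sigma)^\star(\ddc\phi)=\ddc\big(\phi\circ\trop_f|_\Sigma\big)
\]
on the tropical space $\Sigma$.

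To prove this tropical identity, I would use the characterization \eqref{eq: pullback description}. Fix a compact polyhedral $C\subset\Sigma$ and a finite-fibered $g:C\to\R^s$ with $\trop_f=p\circ g$ for an affine map $p$. I then need
\[
g_*\big(\ddc(\phi\circ\trop_f)|_{C\setminus g^{-1}g(\partial C)}\big)=g_*(C)\wedge p^*\ddc\phi
\]
away from $g(\partial C)$. The right-hand side equals $g_*(C)\wedge\ddc(\phi\circ p)$, since $p$ is affine and $p^*$ commutes with $\ddc$ on $\R^s$ \cite{MihatschTIT}. By \cite{MihatschTIT}, wedging the tropical cycle $g_*(C)$ with $\ddc$ of a piecewise affine function is the corner locus of $(\phi\circ p)|_{g_*(C)}$. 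On the left, $\ddc$ of the piecewise linear function $\phi\circ p\circ g$ on the tropical space is the corner locus on $\Sigma$, computed with the weights of $\Sigma$. Because $g$ is finite-fibered and injective on faces after subdivision, the corner locus is compatible with pushforward, so $g_*$ of the corner locus equals the corner locus of the pushed-forward cycle. This is the projection formula for tropical corner loci.

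The main obstacle is matching definitions. One must justify that $\ddc$ of a piecewise linear function on $X$ agrees with the chart description, and that the corner-locus projection formula holds in Mihatsch's formalism with his weights and sign conventions. The most economical route is to cite Mihatsch's statement that $\star$-pullback is a morphism of differential bigraded algebras commuting with $\mathrm d',\mathrm d''$ \cite{DeltaLubinTate}. Then $\trop_f^\star(\ddc\phi)=\ddc\,\trop_f^\star(\phi)$, and $\trop_f^\star\phi=\phi\circ\trop_f$ holds in degree zero directly from \eqref{eq: pullback description} with $g=\trop_f$ on faces. I would try this citation route first and fall back on the corner-locus computation if needed.
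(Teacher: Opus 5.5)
Your primary (citation) route is the paper's proof. The paper restricts to tropical skeletons $\Sigma(U,h)$, uses that $\star$-pullback commutes with $\mathrm d'$ and $\mathrm d''$ \cite[Proposition 2.39]{DeltaLubinTate}, and identifies $\trop_f^\star\phi$ with $\phi\circ\trop_f$ in degree zero by citing \cite[Example 2.45(1)]{DeltaLubinTate}. So your proposal is correct, and the corner-locus fallback is not needed.

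One small fix: if you derive the degree-zero identity yourself from \eqref{eq: pullback description}, you cannot take $g=\trop_f$, because $\trop_f$ alone need not have finite fibres on $\Sigma$ (for example when $f$ is constant). Use instead the tropicalization of $(f,h)$, which is finite-fibred on $\Sigma$, with $p$ the projection.
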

\begin{proof}
    Let $\Sigma\coloneqq\Sigma(U,g)$ be a tropical skeleton in $X$ defined by an open subset $U$ of $X$ and a morphism $g:U\to\mathbb{G}_m^s$. Then $\partial U=\varnothing$ and thus $\Sigma$ is a tropical space.

    It was stated in \cite[Example 2.45(1)]{DeltaLubinTate} that for piecewise smooth forms, the pullback definition for $\delta$-forms agrees with the usual pullback, which implies that
    \[
    \trop_f^\star(\phi)|_\Sigma=\trop_f^*(\phi)|_\Sigma.
    \]
    According to \cite[Proposition 2.39]{DeltaLubinTate}, the pullback of a $\delta$-form along a linear map commutes with the derivatives $\mathrm{d}'$ and $\mathrm{d}''$. Therefore,
    \[
    \trop_f^\star(\ddc\phi)|_\Sigma=\ddc(\trop_f^\star(\phi))|_\Sigma.
    \]
    Hence,
    \[
    \trop_f^\star(\ddc\phi)=\ddc(\trop_f^\star(\phi))=\ddc(\phi\circ\trop_f).
    \]
\end{proof}

\paragraph{Delta-currents.}
Following \cite[Definition 3.4(1)]{MihatschTIT}, we use $(p,q,r)$ to denote the tridegree of a $\delta$-form on $\R^n$ with codimension $r$ in the sense of polyhedral currents, and $(p,q)$ to denote the bidegree in the sense of currents. The trigrading and bigrading of $\delta$-forms on tropical spaces and boundary-less analytic spaces are given by the trigrading and bigrading of the corresponding $\delta$-forms on Euclidean spaces \cite[Definition 2.41 and Definition 4.2]{DeltaLubinTate}. 
For an open subset $U$ of $\R^n$ or of $X^{\an}$, where $X^{\an}$ is the analytification of a variety of dimension $n$, we write $B^{p,q}(U)$ for the space of $\delta$-forms of bidegree $(p,q)$ on $U$, and $B_c^{p,q}(U)$ for the subspace consisting of those with compact support in $U$.

A $\delta$-current on an open subset $U$ is a linear functional on the space of compactly supported $\delta$-forms on $U$, i.e.
\[
E^{p,q}(U)\coloneqq\operatorname{Hom}(B_c^{n-p,n-q}(U),\R)
\]
is the set of $(p,q)$-$\delta$-currents on $U$. For example, 
\begin{itemize}
    \item for every algebraic cycle $Z$ of codimension $p$ of $X$, we have the current of integration $\delta_Z$ on $X^{\an}$ defined by
    \[
    \delta_Z(\alpha)\coloneqq\int_{Z^{\an}}\alpha
    \]
    for $\alpha\in B_c^{n-p,n-p}(X^{\an})$;
    \item for every abstract polyhedral complex $C$, such as a tropical cycle, the current of integration along $C$ is, in the notation of \cite{MihatschTIT}, defined by
    \[
    \delta_C=\sum_\sigma m_\sigma\wedge[\sigma,\mu_\sigma],
    \]
    where $\sigma$ ranges over all maximal faces of $C$, $m_\sigma$ is the weight of $\sigma$ and $\mu_\sigma$ is the Lebesgue measure induced by the integral structure $N_\sigma$.
\end{itemize}
There is a product
\begin{align*}
    B^{p,q}(U)\times E^{p',q'}(U)&\to E^{p+p',q+q'}(U)\\
    (\alpha,T)&\mapsto \big( \alpha\wedge T:\beta\mapsto(-1)^{(p+q)(p'+q')}T(\alpha\wedge\beta) \big).
\end{align*}

\section{Non-Archimedean Monge-Ampère measures}\label{Section: NA MA measure as delta current}

Let $Y$ be a proper variety of dimension $n$ over the algebraically closed non-archimedean field $K$. Let $L$ be a line bundle over $Y$ and $\lVert\cdot\rVert$ be a metric on $L$. The Monge-Amp{\`e}re measure $c_1(L,\lVert\cdot\rVert)^{\wedge n}$ on $Y^{\an}$ was first introduced in \cite[Section 2]{ChambertLoir} and extended to the present setting in \cite[Section 3]{GubMAdef}. We briefly recall the construction of $c_1(L,\lVert\cdot\rVert)^{\wedge n}$. If $\lVert\cdot\rVert$ is a model metric, then the associated Monge-Amp{\`e}re measure can be studied using the theory of $\delta$-forms and $\delta$-currents, as will be explained later in this section. Suppose first that $\lVert\cdot\rVert$ is a model metric induced by the model $(\mathfrak{Y},\mathfrak{L},e)$ of $(Y,L)$. The proof of \cite[Proposition 3.5]{GaoZiyangHabeggerCLMA} allows us to assume that the special fiber $\mathfrak{Y}_s$ is reduced. Then for every irreducible component $V$ of $\mathfrak{Y}_s$, there is a unique point $\xi_V$ whose reduction is the generic point of $V$. Then
\[
c_1(L,\lVert\cdot\rVert)^{\wedge n}\coloneqq \frac{1}{e^n}\sum_V\operatorname{deg}_{\mathcal{L}}(V)\cdot\xi_V
\]
where $V$ ranges over all irreducible components of $\mathfrak{Y}_s$. For a continuous semipositive metric $\lVert\cdot\rVert$ on $L$, the associated Monge-Amp{\`e}re measure is the weak limit of the measures corresponding to any sequence of semipositive model metrics converging uniformly to $\lVert\cdot\rVert$; see \cite[Proposition 2.7(b)]{ChambertLoir} and \cite[Proposition 3.12]{GubMAdef}.

Suppose now $Y=Y_\Sigma$ is a proper toric variety with torus $T$ of dimension $n$ corresponding to a complete fan $\Sigma$ on $N_\R$. Then we have a tropicalization map $\trop:Y^{\an}\to N_\Sigma$ as introduced in \Cref{Section: Toric preliminaries}.

Let $X$ be a closed subvariety of $Y_\Sigma$ of dimension $d$ with a closed embedding $i:X\hookrightarrow Y$. 

Without loss of generality, we may assume that $X\cap T\neq\varnothing$. Indeed, if $X\cap T=\varnothing$, then we can take an orbit $O(\sigma)$ that contains the generic point of $X$. The closure of this orbit, $V(\sigma)=\overline{O(\sigma)}$, is then a proper toric variety with $O(\sigma)$ a dense open subset. We can then reduce to the case where $X\cap T\neq\varnothing$ by replacing $Y$ with $V(\sigma)$ and $T$ with $O(\sigma)$.

Let $L=\mathcal{O}(D)$ be a toric line bundle on $Y$ and let $s_D$ be the canonical toric section of $L$. Let $\lVert\cdot\rVert$ be a semipositive toric metric on $L$ and let $g_{\trop}:N_\R\to\R$ be the corresponding convex tropical Green function.

By the following lemma, we can approximate a Monge-Amp{\`e}re measure associated with a semipositive toric metric by Monge-Amp{\`e}re measures for semipositive toric model metrics.

\begin{lem}\label{lem:approximated semipos metric by model metrics}
    Let $\lVert\cdot\rVert$ be a semipositive toric metric on $L$ with convex tropical Green function $g_{\trop}:N_\R\to\R$. Then there exists a sequence of semipositive toric model metrics $(\lVert\cdot\rVert_i)_{i\geq 1}$ with tropical Green functions $(g_i)_{i\geq 1}$ such that
    \begin{itemize}
        \item $(g_i)_{i\geq 1}$ converges to $g_{\trop}$ uniformly on $N_\R$, and
        \item $\lVert\cdot\rVert$ is the uniform limit of $(\lVert\cdot\rVert_i)_{i\geq 1}.$
    \end{itemize}
    In particular, the measure $c_1(L|_X,\lVert\cdot\rVert)^{\wedge d}$ is the limit of $(c_1(L|_X,\lVert\cdot\rVert_i)^{\wedge d})_{i\geq 1}$ by weak convergence.
\end{lem}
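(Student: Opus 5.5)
We build the approximating Green functions $g_i$ explicitly on the convex side and then use the bijection of \Cref{Section: Toric preliminaries}. There, semipositive toric model metrics on $L$ correspond exactly to convex $\Gamma$-rational piecewise affine functions $g$ on $N_\R$ with $|g+\psi|$ bounded. It therefore suffices to find convex $\Gamma$-rational piecewise affine functions $g_i$ with $|g_i+\psi|$ bounded and $g_i\to g_{\trop}$ uniformly on $N_\R$. The metrics then agree with the Green functions through $-\log\lVert s_D\rVert=g\circ\trop$. On $T^{\an}$ this gives $\bigl|\log(\lVert s_D\rVert_i/\lVert s_D\rVert)\bigr|=|g_i-g_{\trop}|\circ\trop\le\sup_{N_\R}|g_i-g_{\trop}|$. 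Since $T^{\an}$ is dense in $Y^{\an}$ and the quotient of two continuous metrics is a continuous function, the metrics converge uniformly on all of $Y^{\an}$.

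\emph{Construction of $g_i$.} By \cite[Theorem 4.8.1]{BPS} and \cite[Theorem 2.5.8]{ToricMetricNonDiscrete}, the function $g_{\trop}$ corresponds under Legendre–Fenchel duality to a continuous concave roof function $\vartheta$ on the polytope $\Delta_\psi\subset M_\R$. Concretely,
\[
g_{\trop}(u)=\max_{m\in\Delta_\psi}\bigl(\langle m,u\rangle+\vartheta(m)\bigr),
\]
up to the sign conventions of \cite{BPS}. For each $i$, choose a finite set $S_i\subset\Delta_\psi\cap M_\Q$ containing the vertices of $\Delta_\psi$ and $\tfrac1i$-dense in $\Delta_\psi$. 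For $m\in S_i$, choose $c_{m,i}\in\Gamma_\Q=\Q\otimes\Gamma$ with $|c_{m,i}-\vartheta(m)|<\tfrac1i$; this is possible because $\Gamma$ is nontrivial and divisible, $K$ being algebraically closed, so it is dense in $\R$. Then set
\[
g_i(u)\coloneqq\max_{m\in S_i}\bigl(\langle m,u\rangle+c_{m,i}\bigr).
\]
This is convex and $\Gamma$-rational, being a finite maximum of $\Gamma$-rational affine functions: clearing denominators puts all slopes in $M$ and constants in $\Gamma$. Its recession function is $\max_{m\in S_i}\langle m,\cdot\rangle=-\psi$, because $S_i$ contains the vertices of $\Delta_\psi$. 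Hence $|g_i+\psi|$ is bounded, as required.

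\emph{Uniform convergence.} This is the main obstacle: we need $\sup_{N_\R}|g_i-g_{\trop}|\to0$ on the non-compact space $N_\R$. The estimate $g_i\le g_{\trop}+\tfrac1i$ is immediate from the definitions. For the reverse inequality, note that at each $u$ the maximum defining $g_{\trop}$ is attained at some $m_u\in\Delta_\psi$. Pick $m'\in S_i$ close to $m_u$. The difference $\langle m_u-m',u\rangle$ is not small when $|u|$ is large, so we cannot approximate pointwise. Instead we use concavity of $\vartheta$ and a uniform modulus of continuity $\omega$ of $\vartheta$ on the compact set $\Delta_\psi$. Triangulate $\Delta_\psi$ with vertex set $S_i$ and mesh at most $\tfrac1i$, and let $\hat\vartheta_i$ be the piecewise affine interpolation of the values $c_{m,i}$. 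Then $|\hat\vartheta_i-\vartheta|\le\omega(1/i)+\tfrac1i$ uniformly on $\Delta_\psi$. The Legendre transform is $1$-Lipschitz for the sup norm on a fixed domain, so the transform of $\hat\vartheta_i$ lies within $\omega(1/i)+\tfrac1i$ of $g_{\trop}$. Since $\hat\vartheta_i$ is affine on each simplex, the maximum of $\langle m,u\rangle+\hat\vartheta_i(m)$ over a simplex is attained at a vertex. This identifies that transform with $g_i$ even though $\hat\vartheta_i$ itself need not be concave, because we take the maximum over all of $\Delta_\psi$. It follows that $\sup_{N_\R}|g_i-g_{\trop}|\le\omega(1/i)+\tfrac1i\to0$.

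\emph{The final claim.} The restricted metrics $\lVert\cdot\rVert_i|_X$ are semipositive model metrics on $L|_X$, obtained by restricting nef models, and they converge uniformly to $\lVert\cdot\rVert|_X$. The weak convergence of $c_1(L|_X,\lVert\cdot\rVert_i)^{\wedge d}$ to $c_1(L|_X,\lVert\cdot\rVert)^{\wedge d}$ is then exactly the definition/continuity statement \cite[Proposition 3.12]{GubMAdef} recalled above.
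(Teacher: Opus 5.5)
Your proof is correct and has the same overall structure as the paper's. Both arguments reduce the lemma to producing convex $\Gamma$-rational piecewise affine $g_i$ with $|g_i+\psi|$ bounded and $g_i\to g_{\trop}$ uniformly on $N_\R$. Both then use the bijection with semipositive toric model metrics and the continuity of Monge--Amp\`ere measures under uniform limits.

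The difference lies in the key approximation step. The paper handles it in one line. It uses \cite[Corollary 4.8.5]{BPS} to see that $L$ is globally generated, so $\psi$ is the support function of the lattice polytope $\Delta_\psi$. It then cites \cite[Proposition 2.5.24]{BPS} with the dense group $\Gamma$. You instead prove this approximation result directly on the dual side. You interpolate the roof function with $\Gamma$-valued data on a fine rational triangulation and take the Legendre transform. The argument rests on two facts: this transform is $1$-Lipschitz for the sup norm, and an affine function attains its maximum over a simplex at a vertex. A nice feature is that the interpolant need not be concave, since maximizing over the whole polytope takes care of that, and you get the explicit rate $\omega(1/i)+1/i$. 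The paper's route is shorter but entirely black-box. You also spell out how $\sup|g_i-g_{\trop}|\to0$ gives uniform convergence of the metrics on all of $Y^{\an}$, via density of $T^{\an}$; the paper leaves this implicit.

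Two small slips need repair; neither affects the strategy.

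First, the signs. With $\Delta_\psi=\{x:\langle x,u\rangle\ge\psi(u)\ \text{for all } u\}$ one has $\max_{m\in\Delta_\psi}\langle m,u\rangle=-\psi(-u)$. To get recession function $-\psi$ you must therefore maximize over $-\Delta_\psi$, with the roof function composed with $m\mapsto -m$. You flagged this as a convention issue, and it is harmless. You also need the vertices of this polytope to lie in $M$, so that $S_i\subset M_\Q$ can contain them. This is where concavity of $\psi$, i.e. global generation, enters, as in the paper.

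Second, an arbitrary $\frac1i$-dense set $S_i$ need not be the vertex set of a triangulation of mesh $\le\frac1i$. For example, if $S_i$ meets an edge of the polytope only at its endpoints, that whole edge is forced into every triangulation with vertex set $S_i$. Reverse the order of choices. First take a rational triangulation of mesh $\le\frac1i$, for instance by iterated barycentric subdivisions of a lattice triangulation. Then let $S_i$ be its vertex set. It automatically contains the vertices of the polytope, and the rest of your estimate goes through unchanged.
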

\begin{proof}
    We denote by $\psi$ the virtual support function of $L$. Then $|g_{\trop}+\psi|$ is bounded on $N_\R$. Since a toric line bundle admits a semipositive metric if and only if it is generated by global sections \cite[Corollary 4.8.5]{BPS}, the function $\psi$ is the support function of the lattice polytope
    \[
    \Delta_\psi\coloneqq\{ x\in M_\R:\langle x,u\rangle\geq \psi(u)\text{ for all }u\in N_\R \}
    \]
    by \cite[Section 3.4]{BPS}.
    Note that the value group $\Gamma$ on $K$ is dense, since $K$ is algebraically closed.
    Applying \cite[Proposition 2.5.24]{BPS} to the case $\Delta=\Delta_\psi$, we can find a sequence of convex $\Gamma$-rational piecewise affine functions $(g_i)_{i\geq 1}$ such that $|g_i+\psi|$ is bounded and $g_i$ converges uniformly to $g_{\trop}$ on $N_\R$. Hence, for each $i$, there exists a semipositive toric model metric $\lVert\cdot\rVert_i$ such that its tropical Green function is $g_i$. Then $\lVert\cdot\rVert$ is the uniform limit of $(\lVert\cdot\rVert_i)_{i\geq 1}$. The desired result follows.
\end{proof}

Thus, we can first treat the case where $\lVert\cdot\rVert$ is a semipositive toric model metric. In this case, it follows from \cite[Theorem 10.5]{firstDeltaForm} that $c_1(L|_X,\lVert\cdot\rVert)^{\wedge d}$ can be given by a wedge product of $\delta$-forms, as explained below.

We will denote the restriction of the tropicalization map to the torus,
\[
T^{\an}\to N_\R,
\]
by $\trop_T$. Its restriction to $(X\cap T)^{\an}$, i.e. the composition
\[
(X\cap T)^{\an}\xhookrightarrow{i^{\an}}T^{\an}\to N_\R,
\]
will be denoted by $\trop_{X\cap T}$.

It follows from \cite[7.7 and Proposition 9.15]{firstDeltaForm} that we have identities of $\delta$-forms
\[
c_1(L,\lVert\cdot\rVert)|_{T^{\an}}=c_1(L|_T,\lVert\cdot\rVert)=\ddc (g_{\trop}\circ\trop_T)
\]
and
\[
c_1(L,\lVert\cdot\rVert)|_{(X\cap T)^{\an}}=c_1(L|_{X\cap T},\lVert\cdot\rVert)=\ddc (g_{\trop}\circ\trop_{X\cap T}).
\]
Thus, using \Cref{lem:f star ddc}, we get the identities of $\delta$-forms
\begin{equation}\label{eq:id of first Chern form}
    c_1(L,\lVert\cdot\rVert)|_{T^{\an}}=c_1(L|_T,\lVert\cdot\rVert)=(\trop_T)^\star(\ddc g_{\trop})
\end{equation}
and
\begin{equation}\label{eq:id of first Chern form on X cap T}
    c_1(L,\lVert\cdot\rVert)|_{(X\cap T)^{\an}}=c_1(L|_{X\cap T},\lVert\cdot\rVert)=(\trop_{X\cap T})^{\star}(\ddc g_{\trop}).
\end{equation}

The projection formula together with \cite[Proposition 9.15]{firstDeltaForm} gives us an identity of $\delta$-currents
\begin{equation}\label{eq:id of MA of subvar}
    i_*\big(c_1(L|_X,\lVert\cdot\rVert)^{\wedge d}\big)=c_1(L,\lVert\cdot\rVert)^{\wedge d}\wedge\delta_X,
\end{equation}
where $c_1(L,\lVert\cdot\rVert)$ is the first-Chern $\delta$-form on $Y$ and $\delta_X$ is the current of integration on $X^{\an}$. By \cite[Corollary 1.4.5]{ToricMetricNonDiscrete}, the boundary $X^{\an}\backslash T^{\an}$ is a set of measure zero with respect to $c_1(L|_X,\lVert\cdot\rVert)^{\wedge d}$.

Recall that we have already introduced the tropical skeleton in \Cref{Section: tropical skeletons}. Since the $c_1(L|_X,\lVert\cdot\rVert)^{\wedge d}$-measure of $X^{\an}\backslash T^{\an}$ is zero, it is sufficient to study the measure $(c_1(L|_X,\lVert\cdot\rVert)^{\wedge d})|_{X^{\an}\cap T^{\an}}$. The next proposition shows that this measure is supported on the tropical skeleton $\Sigma(X^{\an}\cap T^{\an},i^{\an}|_{X^{\an}\cap T^{\an}})$.

\begin{prop}\label{prop:MA support}
    Let $\lVert\cdot\rVert$ be a semipositive toric metric on $L$. Then
    \[
    \operatorname{supp}\big( c_1(L|_X,\lVert\cdot\rVert)^{\wedge d} \big|_{X^{\an}\cap T^{\an}}\big)\subset\Sigma(X^{\an}\cap T^{\an},i^{\an}|_{X^{\an}\cap T^{\an}}).
    \]
\end{prop}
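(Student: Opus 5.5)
First reduce to model metrics, then prove the support statement via $\delta$-forms, and finally pass to the limit. By \Cref{lem:approximated semipos metric by model metrics}, choose semipositive toric model metrics $\lVert\cdot\rVert_i$ with $\Gamma$-rational convex piecewise affine Green functions $g_i$ such that $c_1(L|_X,\lVert\cdot\rVert_i)^{\wedge d}\to c_1(L|_X,\lVert\cdot\rVert)^{\wedge d}$ weakly. Write $\Sigma\coloneqq\Sigma(X^{\an}\cap T^{\an},i^{\an}|_{X^{\an}\cap T^{\an}})$. It is a closed subset of $X^{\an}\cap T^{\an}$: it is the skeleton attached to the proper map $\trop_{X\cap T}$, and it is locally a finite union of preimages of standard skeleta. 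Let $W\coloneqq (X^{\an}\cap T^{\an})\setminus\Sigma$, which is open. Take any continuous compactly supported function $\varphi$ on $X^{\an}$ with $\operatorname{supp}\varphi\subset W$. If every approximating measure vanishes on $W$, then $\int\varphi\, c_1(L|_X,\lVert\cdot\rVert_i)^{\wedge d}=0$ for all $i$. Weak convergence then gives the same for the limit, so the limit measure vanishes on $W$, which is the claim.

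For a model metric, restrict to the open set $(X\cap T)^{\an}=X^{\an}\cap T^{\an}$, which has no boundary. By \eqref{eq:id of first Chern form on X cap T},
\[
c_1(L|_X,\lVert\cdot\rVert_i)^{\wedge d}\big|_{X^{\an}\cap T^{\an}}=\big((\trop_{X\cap T})^\star(\ddc g_i)\big)^{\wedge d}=(\trop_{X\cap T})^\star\big((\ddc g_i)^{\wedge d}\big).
\]
The first equality uses \cite[Theorem 10.5]{firstDeltaForm}, which says the measure is the top wedge power of the first Chern $\delta$-form. The second uses that $f^\star$ is compatible with the wedge product. In Mihatsch's formalism \cite[Definition 4.2]{DeltaLubinTate}, a $\delta$-form of the form $\trop_f^\star\gamma$ on an open $U$ with $f:U\to\mathbb{G}_m^r$ is by definition given by data on skeleta $\Sigma(U',g)$. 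It is supported on the tropical skeleton $\Sigma(U,f)$: its values on any skeleton are computed via the pullback description \eqref{eq: pullback description}, and these vanish off the $d$-dimensional locus of $\Sigma'(U,f)$. Applying this with $U=X^{\an}\cap T^{\an}$ and $f=i^{\an}|_U$ shows that the measure is a $\delta$-current supported in $\Sigma$. Equivalently, one can argue concretely: the measure is $\sum_V \deg(V)\xi_V$ with $\xi_V$ Shilov points of a toric model, and each $\xi_V$ in the torus is Abhyankar with $d$ monomial coordinates $\chi^{u_1},\dots,\chi^{u_d}$ of algebraically independent reductions. This places $\xi_V$ in $(\chi^{u_1},\dots,\chi^{u_d})^{-1}\Sigma(\mathbb{G}_m^d)$, which lies in $\Sigma$. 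This second route is a useful backup and avoids the formalism.

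The main obstacle is making precise that $(\trop_f)^\star\gamma$, or its top-degree wedge power, is supported on $\Sigma(U,f)$ and not merely on some larger skeleton. Here I would first try the concrete backup route. Choose a toric model $\mathfrak{Y}$ of $Y$ from a $\Gamma$-rational polyhedral complex adapted to $g_i$, take the closure $\mathfrak{X}$ of $X$ in it, and make the special fibre reduced after a finite base change, as in \cite[Proposition 3.5]{GaoZiyangHabeggerCLMA}. Each component $V$ of $\mathfrak{X}_s$ that meets the torus part maps onto an orbit closure in $\mathfrak{Y}_s$. The degree $\deg_{\mathfrak{L}}(V)$ vanishes unless this orbit closure has dimension at least $d$. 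In the nonvanishing case, the character functions restricted to $V$ contain $d$ monomials whose reductions form a transcendence basis of the function field of $V$. That is precisely the Abhyankar condition in \cite[(0.13)]{Duc12}, and it places $\xi_V$ in $\Sigma$.
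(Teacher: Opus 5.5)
Your overall structure matches the paper's: approximate by toric model metrics via \Cref{lem:approximated semipos metric by model metrics}, prove the support statement for model metrics, and pass to the weak limit using that $\Sigma$ is closed. Your test-function argument on $W$ just spells out the paper's last sentence. Your first route for the model case is also the paper's. The difference is that the paper closes the key step by citing \cite[Corollary 4.9(2)]{DeltaLubinTate}, applied to a $\delta$-form of bidegree $(d,d)$ on the purely $d$-dimensional boundaryless space $(X\cap T)^{\an}$. Your justification of that step is stated too generally: it is false that every $\trop_f^\star\gamma$ is supported on $\Sigma(U,f)$ (take $\gamma=1$). The argument works because $(\ddc g_i)^{\wedge d}$ has top bidegree, so its pullback vanishes near points of a skeleton where $\trop_f$ has local image of dimension $<d$, which is exactly the complement of $\Sigma(U,f)$. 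Once you make this hypothesis explicit or supply the citation, the first route is the paper's proof.

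Your backup route is genuinely different and also works, but the orbit step needs repair. A component $V$ need not map onto an orbit closure. The dimension condition you state is vacuous, since a $d$-dimensional $V$ always lies in an orbit closure of dimension $\ge d$, and the nonvanishing of $\deg_{\mathfrak{L}}(V)$ plays no role. The correct argument is as follows. Suppose $\xi_V\in T^{\an}$; this is in fact automatic by Abhyankar's inequality, since $\widetilde{\mathscr{H}(\xi_V)}$ has transcendence degree $d$. Then the image of the generic point of $V$ lies in the orbit $O(\sigma)$ of $\mathfrak{Y}_s$, where $\sigma$ is the polyhedron of the complex defining $\mathfrak{Y}$ with $\trop(\xi_V)\in\operatorname{relint}(\sigma)$. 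The image of $V$ meets this torus in a $d$-dimensional closed subvariety. Hence its function field, over which $\widetilde{\mathscr{H}(\xi_V)}$ is finite, is generated by normalized reductions of characters. Since $\trop(\xi_V)$ is $\Gamma$-rational, every character can be normalized at $\xi_V$. By multiplicativity of reduction on units, you can then pick the transcendence basis among the $n$ coordinate characters, which is what the definition of $\Sigma'$ via $d$-tuples of coordinates requires.

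This route is more elementary. It uses only Chambert-Loir's definition of the measure, bypasses \cite[Theorem 10.5]{firstDeltaForm}, (\ref{eq:id of first Chern form on X cap T}) and Mihatsch's support theorem, and exhibits the model measures as finite sums of Dirac masses at points of $\Sigma$. The paper's route is shorter given the $\delta$-form machinery, and that formalism is reused in the proof of \Cref{thm:main thm}.
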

\begin{proof}
    Let $g_{\trop}:N_\R\to\R$ be the tropical Green function corresponding to $\lVert\cdot\rVert$. Since every semipositive toric metric $\lVert\cdot\rVert$ is a limit of semipositive toric model metrics by \Cref{lem:approximated semipos metric by model metrics}, we  first consider the case where $\lVert\cdot\rVert$ is a toric model metric. In this case, the tropical Green function $g_{\trop}$ is a convex $\Gamma$-rational piecewise affine function. Since $(\trop_{X\cap T})^\star(\ddc g_{\trop})^{\wedge d}\in B^{d,d}(X^{\an}\cap T^{\an})$ is a $\delta$-form on the purely $d$-dimensional analytic space $(X\cap T)^{\an}$, by \cite[Corollary 4.9(2)]{DeltaLubinTate}, we have that
    \[
    \operatorname{supp}\big( (\trop_{X\cap T})^\star(\ddc g_{\trop})^{\wedge d} \big)\subset\Sigma(X^{\an}\cap T^{\an},i^{\an}|_{X^{\an}\cap T^{\an}}).
    \]
    By (\ref{eq:id of first Chern form on X cap T}), we have
    \[
    c_1(L|_{X\cap T},\lVert\cdot\rVert)^{\wedge d}=(\trop_{X\cap T})^\star(\ddc g_{\trop})^{\wedge d}
    \]
    and thus
    \[
    \operatorname{supp}\big( c_1(L|_X,\lVert\cdot\rVert)^{\wedge d} \big|_{X^{\an}\cap T^{\an}}\big)\subset\Sigma(X^{\an}\cap T^{\an},i^{\an}|_{X^{\an}\cap T^{\an}}).
    \]

    In general, let $\lVert\cdot\rVert$ be a semipositive toric metric. Then as a consequence of \Cref{lem:approximated semipos metric by model metrics}, there exists a sequence of semipositive toric model metrics $(\lVert\cdot\rVert_i)_{i\geq 1}$ such that $c_1(L|_X,\lVert\cdot\rVert)^{\wedge d}$ is the limit of $( c_1(L|_X,\lVert\cdot\rVert_i)^d )_{i\geq 1}$ by weak convergence. Since $\Sigma(X^{\an}\cap T^{\an},i^{\an}|_{X^{\an}\cap T^{\an}})$ is a closed subset, the desired result follows from the conclusion for $\lVert\cdot\rVert_i$.
\end{proof}

\section{Polyhedral Monge-Amp{\`e}re measures}\label{Section: poly MA}

In \Cref{Section: tropicalized MA} and \Cref{Section: main thm}, we will describe non-archimedean Monge-Amp{\`e}re measures in terms of polyhedral Monge-Amp{\`e}re measures, thereby allowing the former to be computed locally on tropical skeletons. 

The theory of polyhedral Monge-Amp{\`e}re measures developed in \cite{MApoly} is formulated for $(\Z,\Q)$-polyhedral spaces and rational piecewise affine functions. In our setting, however, the natural polyhedral structures are $(\Z,\Gamma)$-polyhedral, and the piecewise affine functions arising from toric model metrics are $\Gamma$-rational. We therefore establish, in the form needed here, the corresponding version of the relevant results for $(\Z,\R)$-polyhedral complexes and piecewise affine functions with rational slopes.

\begin{defn}
    Let $\widetilde{C}\subset N_\R$ be a balanced weighted $(\Z,\R)$-polyhedral complex of pure dimension $d$, with nonnegative weights on its $d$-dimensional faces. Let $f:N_\R\to\R$ be a piecewise affine function. Following the tropical intersection theory developed by Mihatsch \cite{MihatschTIT}, which applies to arbitrary polyhedra and piecewise affine functions, we define the associated polyhedral Monge-Amp{\`e}re measure by
    \[
    \operatorname{MA}_{\operatorname{poly},\widetilde{C}}(f)=(\ddc f)^{\wedge d}\wedge\delta_{\widetilde{C}},
    \]
    where $\delta_{\widetilde{C}}$ is the integration current along the balanced polyhedral complex $\widetilde{C}$. If $f$ is convex, then $\operatorname{MA}_{\operatorname{poly},\widetilde{C}}(f)$ is a positive Radon measure by the positivity theory of $\delta$-forms; see \cite[Example 1.5 and Proposition 1.8]{GKpositivity}. That is the analogue, in our $(\Z,\R)$-polyhedral setting, of \cite[Definition 4.4]{MApoly}.
\end{defn}

Botero, Mazzon, and Pille-Schneider extend their tropical intersection-theoretic construction to \textit{PC-regularizable} functions in \cite[Section 4.3]{MApoly}. More precisely, for a fixed rational piecewise affine function $\gamma$, a \textit{PC-regularizable function} with respect to $\gamma$ can be characterized as a uniform limit of piecewise affine polyhedrally convex functions $f_i$ such that $|f_i-\gamma|$ is bounded; see \cite[Proposition 3.22 and Definition 3.23]{MApoly}. In particular, the convex piecewise affine functions with rational slopes considered here are PC-regularizable, even if their affine constants are not rational.

For a PC-regularizable function $f:N_\R\to\R$, we write
\[
\operatorname{MA}_{\operatorname{poly},\widetilde{C}}^{\operatorname{BMP}}(f)
\]
for its polyhedral Monge-Amp{\`e}re measure on a balanced $(\Z,\Q)$-polyhedral complex $\widetilde{C}$ on $N_\R$ in the sense of \cite{MApoly}. The proposition below, \Cref{prop:BMP MA = our poly MA in PL case}, compares $\operatorname{MA}_{\operatorname{poly},\widetilde{C}}^{\operatorname{BMP}}$ with Mihatsch's tropical intersection product, showing that our piecewise affine definition above is compatible with the construction of \cite{MApoly}.

Before proving the proposition, we first explain a local reduction to the rational case. This is the main tool we use to pass from the rational setting of \cite{MApoly} to the setting considered here. We first note that the polyhedral Monge-Amp{\`e}re measure of \cite{MApoly} is local: by \cite[Corollary 4.18]{MApoly}, if two PC-regularizable functions agree on an open subset, then their polyhedral Monge-Amp{\`e}re measures also agree there.

On the other hand, after translation, every $(\Z,\R)$-polyhedral complex is locally identified with a rational polyhedral fan. Thus, locally, the setting considered here can be reduced to the rational setting of \cite{MApoly}. Since this reduction will be used repeatedly in the proofs below, we record it explicitly.

\paragraph{Local reduction to the rational case.}
Let $\widetilde{C}\subset N_\R$ be a balanced $(\Z,\R)$-polyhedral complex. For every $v\in|\widetilde{C}|$, let $\mathcal{F}_v\coloneqq\operatorname{Star}_v(\widetilde{C})$
 be the local star of $\widetilde{C}$ at $v$. We denote by
 \[
 t_v(y)\coloneqq v+y
 \]
the translation by $v$. Then there exists an open neighborhood $U_v\subset N_\R$ of $v$ such that
\[
\widetilde{C}|_{U_v}=(t_v)_*(\mathcal{F}_v|_{V_v}),
\]
where $V_v\coloneqq U_v-v$. Note that $\mathcal{F}_v$ is a rational generalized fan, where ``generalized" means that its cones are not necessarily strongly convex. Indeed, under the translation $x=v+y$, every defining inequality $\langle m,x\rangle\geq c$ which is an equality at $v$ becomes $\langle m,y\rangle\geq 0$, so $\mathcal{F}_v$ is rational.

We will also use the following consequence of this local viewpoint. Suppose that $\widetilde{C}$ and $\widetilde{C}'$ are two balanced rational polyhedral complexes in $N_\R$ such that $\widetilde{C}|_U=\widetilde{C}'|_U$ as weighted cycle on an open subset $U\subset N_\R$. Then for every PC-regularizable function $f$ in the sense of \cite{MApoly},
\[
\operatorname{MA}_{\operatorname{poly},\widetilde{C}}^{\operatorname{BMP}}(f)|_U=\operatorname{MA}_{\operatorname{poly},\widetilde{C}'}^{\operatorname{BMP}}(f)|_U.
\]
Indeed, for rational piecewise affine functions this follows from the locality of the tropical intersection product on open subsets \cite[Remark 2.37]{MApoly}, and the general case follows by approximation and the continuity in \cite[Theorem 4.13]{MApoly}. We will use this together with the locality with respect to the function given by \cite[Corollary 4.18]{MApoly}.

\begin{prop}\label{prop:BMP MA = our poly MA in PL case}
    Let $\widetilde{C}\subset N_\R$ be a balanced $(\Z,\Q)$-polyhedral complex of pure dimension $d$ with nonnegative weights. Let $f:N_\R\to\R$ be a convex piecewise affine function with rational slopes, whose affine constants are not necessarily rational. Assume that $|f-\gamma|$ is bounded for some rational piecewise affine function $\gamma$. Then $f|_{|\widetilde{C}|}$ is PC-regularizable in the sense of \cite{MApoly}, and
    \begin{equation}\label{eq: prop comparison of BMP MA and our poly MA}
        \operatorname{MA}_{\operatorname{poly},\widetilde{C}}^{\operatorname{BMP}}(f)=(\ddc f)^{\wedge d}\wedge\delta_{\widetilde{C}}.
    \end{equation}
\end{prop}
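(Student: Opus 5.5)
The plan has two parts: first show that $f|_{|\widetilde{C}|}$ is PC-regularizable, then prove \eqref{eq: prop comparison of BMP MA and our poly MA} locally by translating to a rational fan, using locality on both sides.

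\emph{PC-regularizability.} By \cite[Proposition 3.22 and Definition 3.23]{MApoly}, it suffices to exhibit $f$ as a uniform limit of rational, piecewise affine, polyhedrally convex functions $f_i$ with $|f_i-\gamma|$ bounded. Write $f=\max_{j}(\langle m_j,\cdot\rangle+\ell_j)$, a finite maximum with $m_j\in M_\Q$ and $\ell_j\in\R$; this is possible because $f$ is convex and piecewise affine. Choose $\ell_j^{(i)}\in\Q$ with $|\ell_j^{(i)}-\ell_j|<1/i$ and set $f_i\coloneqq\max_j(\langle m_j,\cdot\rangle+\ell_j^{(i)})$. Then each $f_i$ is a convex rational piecewise affine function, and $\lVert f_i-f\rVert_\infty<1/i$. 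Moreover $|f_i-\gamma|\le|f_i-f|+|f-\gamma|$ is bounded. Restricting to $|\widetilde{C}|$, convex functions on $N_\R$ restrict to polyhedrally convex functions, so $f|_{|\widetilde{C}|}$ is PC-regularizable.

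\emph{Comparison of measures.} Both sides are Radon measures on $N_\R$ supported on $|\widetilde{C}|$, so it is enough to compare them on a small open neighborhood $U$ of each $v\in|\widetilde{C}|$.

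1. Fix $v$ and choose $U$ so small that two things hold. First, $\widetilde{C}|_U=(t_v)_*(\mathcal{F}_v|_{U-v})$, by the local reduction recorded above. Second, $f|_U$ agrees with $f_v\coloneqq\max_{j\in J_v}(\langle m_j,\cdot\rangle+\ell_j)$, where $J_v$ is the set of indices $j$ with $f(v)=\langle m_j,v\rangle+\ell_j$. This second condition holds for small $U$ because the inactive affine pieces are strictly smaller than $f$ at $v$, and stay strictly smaller nearby.

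2. Consider the translated function $h\coloneqq f_v\circ t_v-f(v)=\max_{j\in J_v}\langle m_j,\cdot\rangle$. This is a rational piecewise linear function, with no constant terms at all.

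3. On the fan $\mathcal{F}_v$, with $h$ rational piecewise affine, the comparison $\operatorname{MA}^{\operatorname{BMP}}_{\operatorname{poly},\mathcal{F}_v}(h)=(\ddc h)^{\wedge d}\wedge\delta_{\mathcal{F}_v}$ is the definition in \cite[Definition 4.4]{MApoly}. One checks that Mihatsch's intersection product agrees with the Allermann--Rau product used there for rational data, and I would cite \cite{MihatschTIT} for this compatibility.

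4. Transport back along $t_v$. Adding a constant does not change $\ddc$, and both constructions are compatible with translations. For Mihatsch's product this is immediate from functoriality under affine maps. For the BMP measure, I would establish translation invariance for real translations by approximating: translate by rational vectors $v_k\to v$ and pass to the limit using the continuity in \cite[Theorem 4.13]{MApoly}. Then I would apply the two localities, \cite[Corollary 4.18]{MApoly} for the function (replacing $f$ by $f_v$ on $U$) and the cycle locality noted above (replacing $\widetilde{C}$ by $(t_v)_*\mathcal{F}_v$ on $U$). Together with the corresponding locality of Mihatsch's product, this gives \eqref{eq: prop comparison of BMP MA and our poly MA} on $U$.

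\emph{Main obstacle.} The delicate step is the translation in step 4. The translated complex $(t_v)_*\mathcal{F}_v$ is in general only $(\Z,\R)$-polyhedral, so \cite{MApoly} does not directly apply to it. The fallback is to avoid translating the complex entirely. Because $\widetilde{C}$ is itself $(\Z,\Q)$, one can instead compare directly on $\widetilde{C}$ with the rational approximants $f_i$. For each $i$, \cite[Definition 4.4]{MApoly} gives $\operatorname{MA}^{\operatorname{BMP}}(f_i)=(\ddc f_i)^{\wedge d}\wedge\delta_{\widetilde{C}}$. Letting $i\to\infty$, the left side converges weakly by \cite[Theorem 4.13]{MApoly}. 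For the right side, one needs weak continuity of $(\ddc f_i)^{\wedge d}\wedge\delta_{\widetilde{C}}$ under perturbing only the constant terms. I would prove this directly: on each face the measure is a finite sum of weighted point masses at the vertices of the refined complex defined by the corner locus, and these vertices and masses vary continuously with the constants $\ell_j$ while the slopes $m_j$ stay fixed.
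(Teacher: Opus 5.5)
Your PC-regularizability argument is the paper's, and your steps 1--3 reproduce its local reduction at $v$. Your $h=\max_{j\in J_v}\langle m_j,\cdot\rangle$ is a slightly cleaner substitute for the paper's $g_0$, which keeps the inactive pieces with rationalized negative constants.

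The obstacle you flag in step 4 is a misdiagnosis: $(t_v)_*\mathcal{F}_v$ is in fact $(\Z,\Q)$-polyhedral. Let $\tau$ be the face of $\widetilde{C}$ with $v\in\operatorname{relint}(\tau)$. For each face $\sigma\supseteq\tau$, the translated cone $v+\R_{\geq 0}(\sigma-v)$ is cut out by exactly those defining inequalities $\langle m_i,x\rangle\geq c_i$ of $\sigma$ (with $m_i\in M$, $c_i\in\Q$) that are equalities on all of $\tau$. So $(t_v)_*\mathcal{F}_v$ is the rational affine star of $\tau$. It also equals $(t_w)_*\mathcal{F}_v$ for every $w\in\operatorname{relint}(\tau)$, because $N_{\tau,\R}$ lies in the lineality space of $\mathcal{F}_v$. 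What is genuinely irrational is only the translation $t_v$ and the constants of $f_v=h\circ t_{-v}+f(v)$.

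Your remedy (rational $v_k\to v$ plus \cite[Theorem 4.13]{MApoly}) then works, but only with a specification the paper makes and you omit: $v_k\in\operatorname{relint}(\tau)\cap N_\Q$. This choice keeps the complex fixed along the sequence; in the paper's formulation, $(t_{-v_k})_*\widetilde{C}$ agrees with $\mathcal{F}_v$ on a neighborhood of $0$ independent of $k$. It is essential, because \cite[Theorem 4.13]{MApoly} is continuity in the function on a fixed complex, and for an arbitrary rational sequence the complexes move with $k$. With this choice your main route is the paper's proof.

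Your fallback is a genuinely different, global route. Because $\widetilde{C}$ is already $(\Z,\Q)$, it compares both sides along the rational approximants $f_i$ directly and needs neither translations nor locality. Its price is an input not contained in \cite{MApoly}: weak continuity of $(\ddc f_i)^{\wedge d}\wedge\delta_{\widetilde{C}}$ under uniform perturbation of the real constants. The paper's route, by contrast, gets by with \cite{MApoly} and the locality of Mihatsch's product.

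The justification you sketch for this continuity is not correct as stated. There is no continuously moving family of vertices with continuously varying masses, because the combinatorial type changes at walls. For $\max\{0,x+\ell_1,y+\ell_2,x+y+\ell_3\}$ on $\R^2$, two equal point masses merge into one of twice the size at $\ell_3=\ell_1+\ell_2$ and then separate in a different direction. Only weak continuity with conservation of mass holds, and that is exactly what has to be proved. A clean proof uses the identity
\[
\int\phi\,\bigl((\ddc f_i)^{\wedge d}-(\ddc f)^{\wedge d}\bigr)\wedge\delta_{\widetilde{C}}=\sum_{k=0}^{d-1}\int(f_i-f)\,\ddc\phi\wedge(\ddc f_i)^{\wedge k}\wedge(\ddc f)^{\wedge(d-1-k)}\wedge\delta_{\widetilde{C}}
\]
for smooth compactly supported $\phi$; it holds because the mixed currents are closed and $\ddc f_i$, $\ddc f$ commute. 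Combine it with a locally uniform mass bound of Chern--Levine--Nirenberg type for the positive mixed $1$-cycles, which makes the right-hand side $O(\lVert f_i-f\rVert_\infty)$. Alternatively, identify the $\delta$-form product of convex piecewise affine functions with the Bedford--Taylor product via $(\ddc h)\wedge S=\ddc(hS)$ for closed $S$, and cite \cite[Theorem 4.1.2]{BGJK21}.
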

\begin{proof}
    Since $f$ is convex piecewise affine, we can write $f$ as
    \[
    f(x)=\max_{j\in J}\left\{ \langle a_j,x\rangle+b_j \right\},
    \]
    where $a_j\in M_\Q$, $b_j\in\R$ and $J$ is finite. We first show that $f|_{|\widetilde{C}|}$ is PC-regularizable. Since $\Q$ is dense, for every $j\in J$, we can choose a sequence $\{b_{j,k}\}_{k\geq 1}\subset\Q$ such that $b_{j,k}\to b_j$ as $k\to\infty$. Set
    \[
    f_k(x)\coloneqq\max_{j\in J}\left\{ \langle a_j,x\rangle+b_{j,k} \right\}
    \]
    for $k\geq 1$. Then each $f_k$ is a convex rational piecewise affine function and $f$ is the uniform limit of $f_k$. Since $|f-\gamma|$ is bounded, we have $|f_k-\gamma|$ is bounded for $k$ large enough. This shows that $f$ is PC-regularizable and thus $\operatorname{MA}_{\operatorname{poly},\widetilde{C}}^{\operatorname{BMP}}(f)$ is well-defined.

    We now prove the desired equality (\ref{eq: prop comparison of BMP MA and our poly MA}). We first locally identify $f$ with a rational piecewise affine function. In this rational case, the construction of \cite{MApoly} is given by the tropical intersection product, and thus agrees with our definition of polyhedral Monge-Amp{\`e}re measure. 

    To obtain this local identification, we use the local reduction to the rational case. Fix a point $v\in\widetilde{C}$ and take the local star $\mathcal{F}_v\coloneqq\operatorname{Star}_v(\widetilde{C})$, which is a rational generalized fan. As discussed above, $\mathcal{F}_v$ and $(t_{-v})_*(\widetilde{C})$ agree in an neighborhood of $0$.
    
    We now modify $f\circ t_v$ near $0$ such that it agrees there with a rational piecewise affine function. Recall that the polyhedral Monge-Ampère measure is invariant under addition of constants \cite[Theorem 4.13(3)]{MApoly}. Set
    \[
    f_0(y)\coloneqq f(v+y)-f(v)=(f\circ t_v)(y)-f(v).
    \]
    Then
    \[
    f_0(y)=\max_{j\in J}\left\{ \langle a_j,y\rangle+\beta_j \right\}\quad\text{for}\quad \beta_j=\langle a_j,v\rangle+b_j-f(v)\leq 0.
    \]
    Note that $\beta_j=0$ for at least one $j$, since $f(v)$ is the maximum of affine pieces at $v$. We now modify $f_0$ to obtain a rational piecewise affine function locally near $0$. For $j\in J$ with $\beta_j=0$, we set $\beta_j'=\beta_j=0$. For $j\in J$ with $\beta_j<0$, we replace $\beta_j$ by a negative rational number $\beta_j'\in\Q$. Set
    \[
    g_0(y)\coloneqq \max_{j\in J}\left\{ \langle a_j,y\rangle+\beta_j' \right\}.
    \]
    Then $f_0=g_0$ locally near $0$. Thus, the locality of polyhedral Monge-Amp{\`e}re measures from \cite[Corollary 4.18]{MApoly} gives us
    \[
    \operatorname{MA}_{\operatorname{poly},\mathcal{F}_v}^{\operatorname{BMP}}(f_0)=\operatorname{MA}_{\operatorname{poly},\mathcal{F}_v}^{\operatorname{BMP}}(g_0)
    \]
    in an open neighborhood of $0$. Since $g_0$ is rational, it follows from \cite[Definition 4.4]{MApoly} and \cite[Lemma 4.9]{MihatschTIT} that
    \[
    \operatorname{MA}_{\operatorname{poly},\mathcal{F}_v}^{\operatorname{BMP}}(g_0)=(\ddc g_0)^{\wedge d}\wedge\delta_{\mathcal{F}_v}. 
    \]
    Since Mihatsch's tropical intersection theory from \cite[Lemma 4.9]{MihatschTIT} is local and $f_0=g_0$ near $0$, we have $(\ddc f_0)^{\wedge d}\wedge\delta_{\mathcal{F}_v}=(\ddc g_0)^{\wedge d}\wedge\delta_{\mathcal{F}_v}$ and thus
    \begin{equation}\label{eq: eq1 in prop comparison BMP MA}
        \operatorname{MA}_{\operatorname{poly},\mathcal{F}_v}^{\operatorname{BMP}}(f_0)=(\ddc f_0)^{\wedge d}\wedge\delta_{\mathcal{F}_v}
    \end{equation}
    in an open neighborhood of $0$. It remains to transfer this equality (\ref{eq: eq1 in prop comparison BMP MA}) back to $\widetilde{C}$. In the right-hand side, we have immediately
    \[
    (t_{-v})_*\left( (\ddc f)^{\wedge d}\wedge\delta_{\widetilde{C}} \right)=(\ddc f_0)^{\wedge d}\wedge\delta_{\mathcal{F}_v}
    \]
    in an open neighborhood of $0$. 
    
    Therefore, it suffices to show that in the left-hand side of (\ref{eq: eq1 in prop comparison BMP MA}) we have
    \[
    (t_{-v})_*\left( \operatorname{MA}_{\operatorname{poly},\widetilde{C}}^{\operatorname{BMP}}(f) \right)=\operatorname{MA}_{\operatorname{poly},\mathcal{F}_v}^{\operatorname{BMP}}(f_0)
    \]
    in an open neighborhood of $0$. By the invariance of polyhedral Monge-Amp{\`e}re measures with respect to additive constants \cite[Theorem 4.13(3)]{MApoly}, it is equivalent to show that
    \begin{equation}\label{eq: eq2 in prop comparison BMP MA}
        (t_{-v})_*\left( \operatorname{MA}_{\operatorname{poly},\widetilde{C}}^{\operatorname{BMP}}(f) \right)=\operatorname{MA}_{\operatorname{poly},\mathcal{F}_v}^{\operatorname{BMP}}(f\circ t_v)
    \end{equation}
    
    Since $v$ need not be rational, we approximate it with rational points $v_k$ whose local stars $\operatorname{Star}_{v_k}(\widetilde{C})$ agree with $\mathcal{F}_v$. Indeed, let $\tau$ be the rational face of $\widetilde{C}$ such that $v\in\operatorname{relint}(\tau)$. We take rational points $\{v_k\}_{k\geq 1}$ such that $v_k\in\operatorname{relint}(\tau)\cap N_{\Q}$ and $v_k\to v$ as $k\to\infty$. Then $\operatorname{Star}_{v_k}(\widetilde{C})=\mathcal{F}_v$. 

    Since $v_k\in N_\Q$, translation by $v_k$ preserves $(\Z,\Q)$-polyhedral spaces and rational piecewise affine functions. Hence, by translation invariance in the piecewise affine case and the continuity \cite[Theorem 4.13]{MApoly}, we have
    \[
    (t_{-v_k})_*\left( \operatorname{MA}_{\operatorname{poly},\widetilde{C}}^{\operatorname{BMP}}(f) \right)= \operatorname{MA}_{\operatorname{poly},(t_{-v_k})_*(\widetilde{C})}^{\operatorname{BMP}}(f\circ t_{v_k}).
    \]
    For $k$ sufficiently large, $(t_{-v_k})_*(\widetilde{C})$ and $\mathcal{F}_v$ agree in a fixed open neighborhood of $0$. Using the locality discussed above, we have
    \[
    (t_{-v_k})_*\left( \operatorname{MA}_{\operatorname{poly},\widetilde{C}}^{\operatorname{BMP}}(f) \right)= \operatorname{MA}_{\operatorname{poly},\mathcal{F}_v}^{\operatorname{BMP}}(f\circ t_{v_k})
    \]
    in this neighborhood. 

    Since $\{f\circ t_{v_k}\}_{k\geq 1}$ is a sequence of convex piecewise affine functions with rational slopes, converging uniformly to $f\circ t_v$ as $k\to\infty$, by \cite[Theorem 4.13]{MApoly},
    \[
    \operatorname{MA}_{\operatorname{poly},\mathcal{F}_v}^{\operatorname{BMP}}(f\circ t_{v_k}) \to \operatorname{MA}_{\operatorname{poly},\mathcal{F}_v}^{\operatorname{BMP}}(f\circ t_v)
    \]
    weakly against compactly supported functions on $|\mathcal{F}_v|$ as $k\to\infty$. On the other hand, we have
    \[
    (t_{-v_k})_*\left( \operatorname{MA}_{\operatorname{poly},\widetilde{C}}^{\operatorname{BMP}}(f) \right)\to (t_{-v})_*\left( \operatorname{MA}_{\operatorname{poly},\widetilde{C}}^{\operatorname{BMP}}(f) \right)
    \]
    weakly against compactly supported functions on $|(t_{-v_k})_*(\widetilde{C})|$ as $k\to\infty$. Comparing the two weak limits above yields (\ref{eq: eq2 in prop comparison BMP MA}), which proves the desired result (\ref{eq: prop comparison of BMP MA and our poly MA}).
\end{proof}

In our setting, we will use the weak continuity established in \Cref{Prop:MA poly weak convergence} to pass from semipositive toric model metrics to arbitrary semipositive toric metrics by uniform approximation. Moreover, the approximating functions arising from toric model metrics are convex $\Gamma$-rational piecewise affine functions on $N_\R$. We therefore need an extension of the weak continuity in \cite[Theorem 4.13]{MApoly} beyond the rational setting, which we formulate below.

\begin{prop}\label{Prop:MA poly weak convergence}
    Let $\widetilde{C}\subset N_\R$ be a balanced weighted $(\Z,\R)$-polyhedral complex of pure dimension $d$ with nonnegative weights, and let $\gamma:N_\R\to\R$ be a convex piecewise affine function with rational slopes. Let $(f_i)_{i\geq 1}$ be a sequence of convex piecewise affine functions with rational slopes on $N_\R$ such that $|f_i-\gamma|$ is bounded for every $i$, and suppose that $f_i$ converges uniformly to a convex function $f:N_\R\to\R$. Then the measures
    \[
    \operatorname{MA}_{\operatorname{poly},\widetilde{C}}(f_i)
    \]
    converge weakly against compactly supported continuous functions on $|\widetilde{C}|$. The limit is a positive Radon measure on $|\widetilde{C}|$ and is independent of the choice of such an approximating sequence.
\end{prop}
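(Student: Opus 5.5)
The plan is to reduce, locally on $|\widetilde{C}|$, to the rational setting of \cite{MApoly}, where weak continuity is \cite[Theorem 4.13]{MApoly}, and then patch the local statements together with a partition of unity. Convergence against $\phi\in C_c(|\widetilde{C}|)$ only involves the support of $\phi$, which is compact. Cover it by finitely many open sets $U_{v}$, $v\in|\widetilde{C}|$, as in the local reduction paragraph, so that $\widetilde{C}|_{U_v}=(t_v)_*(\mathcal{F}_v|_{V_v})$ with $\mathcal{F}_v=\operatorname{Star}_v(\widetilde{C})$ a rational generalized fan. Using a subordinate partition of unity, it suffices to prove the following: for each $v$ and each $\phi$ supported in $U_v$, the integrals $\int\phi\,\operatorname{MA}_{\operatorname{poly},\widetilde{C}}(f_i)$ converge, and the limit depends only on $f$.

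For fixed $v$, first pass to the translated picture. Mihatsch's product is compatible with translation and local on open sets, so
\[
(t_{-v})_*\operatorname{MA}_{\operatorname{poly},\widetilde{C}}(f_i)=(\ddc (f_i\circ t_v))^{\wedge d}\wedge\delta_{\mathcal{F}_v}
\]
on $V_v$. The functions $f_i\circ t_v$ are convex piecewise affine with rational slopes and converge uniformly to $f\circ t_v$. Moreover, $|f_i\circ t_v-\gamma\circ t_v|$ is bounded, and $\gamma\circ t_v$ differs from a rational piecewise affine function by a bounded amount. Indeed, if $\gamma=\max_j\{\langle a_j,\cdot\rangle+c_j\}$ with $a_j\in M_\Q$, replacing each constant by a rational one changes $\gamma$ only by a bounded amount. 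So \Cref{prop:BMP MA = our poly MA in PL case}, applied to the rational complex $\mathcal{F}_v$, identifies the right-hand side with $\operatorname{MA}^{\operatorname{BMP}}_{\operatorname{poly},\mathcal{F}_v}(f_i\circ t_v)$. The $f_i\circ t_v$ are uniform limits of PC-regularizable functions with a common reference function, and so is $f\circ t_v$. Hence the continuity in \cite[Theorem 4.13]{MApoly} gives weak convergence to $\operatorname{MA}^{\operatorname{BMP}}_{\operatorname{poly},\mathcal{F}_v}(f\circ t_v)$ on $V_v$. This limit depends only on $f$, which gives independence of the approximating sequence. Translating back by $t_v$ proves the local claim.

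To globalize, the local limits are positive Radon measures on the $U_v$. By construction they agree on overlaps, since on $U_v\cap U_w$ each is the weak limit of the same sequence $\operatorname{MA}_{\operatorname{poly},\widetilde{C}}(f_i)$. They therefore glue to a positive Radon measure on $|\widetilde{C}|$. Positivity of the limit also follows directly, since each $\operatorname{MA}_{\operatorname{poly},\widetilde{C}}(f_i)$ is positive by \cite{GKpositivity}.

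The step I expect to be the main obstacle is verifying the hypotheses of \cite[Theorem 4.13]{MApoly} exactly as stated there. This means checking that $f\circ t_v$ is PC-regularizable with respect to a single fixed rational reference function uniformly in $i$, which needs a common bound for $|f_i-\gamma|$ for large $i$. That bound follows from uniform convergence: $|f_i-f_1|$ is eventually bounded by $1+\sup|f-f_1|$. A second point to handle is that $\mathcal{F}_v$ is a generalized fan rather than a complex of strongly convex cones. I would treat this by refining $\mathcal{F}_v$ to a rational complex, which does not change $\delta_{\mathcal{F}_v}$.
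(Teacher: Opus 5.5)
Your proposal is correct and follows essentially the same route as the paper. You translate to the rational local star $\mathcal{F}_v$, replace $\gamma\circ t_v$ by a rational reference function, identify $\operatorname{MA}_{\operatorname{poly},\widetilde{C}}(f_i)$ near $v$ with $(t_v)_*\operatorname{MA}^{\operatorname{BMP}}_{\operatorname{poly},\mathcal{F}_v}(f_i\circ t_v)$ via \Cref{prop:BMP MA = our poly MA in PL case}, apply \cite[Theorem 4.13]{MApoly}, and patch with a partition of unity. Your extra remarks on a common reference bound and on refining the generalized fan correspond to the paper's choice of $\eta_v$ and of a rational simplicial refinement of $\mathcal{F}_v$.
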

For such a function $f$, we define
\[
\operatorname{MA}_{\operatorname{poly},\widetilde{C}}(f)\coloneqq \lim\limits_{i\to\infty}\operatorname{MA}_{\operatorname{poly},\widetilde{C}}(f_i),
\]
where the limit is taken in the sense of weak convergence against compactly supported continuous functions on $|\widetilde{C}|$.

We now prove \Cref{Prop:MA poly weak convergence} using local reduction to the rational case.

\begin{proof}
    Fix a point $v\in|\widetilde{C}|$. Let $t_v$ be the translation by $v$ and $\mathcal{F}_v$ be the local star of $\widetilde{C}$ at $v$. As we have discussed above in the local reduction to the rational case, $\mathcal{F}_v$ and $(t_{-v})_*(\widetilde{C})$ agree in a fixed open neighborhood $U_0$ of $0$.

    Since $\Q$ is dense, we can find a convex rational piecewise affine function $\eta_v$ such that $|\eta_v-\gamma\circ t_v|$ is bounded. This implies that $|f_i\circ t_v-\eta_v|$ and $|f\circ t_v-\eta_v|$ are bounded. Using essentially the same argument in the proof of \Cref{prop:BMP MA = our poly MA in PL case}, we obtain that $f_i\circ t_v$ and $f\circ t_v$ can be approximated by convex rational piecewise affine functions. Therefore, $(f_i\circ t_v)|_{|\mathcal{F}_v|}$ and $(f\circ t_v)|_{|\mathcal{F}_v|}$ are PC-regularizable in the sense of \cite{MApoly} with respect to the rational piecewise affine function $\eta_v$.

    Choose a rational simplicial refinement of $\mathcal{F}_v$ on which $\eta_v$ is affine. Since $f\circ t_v$ is the uniform limit of $f_i\circ t_v$, it follows from \cite[Theorem 4.13]{MApoly} that
    \[
    \operatorname{MA}_{\operatorname{poly},\mathcal{F}_v}^{\operatorname{BMP}}(f_i\circ t_v)\longrightarrow \operatorname{MA}_{\operatorname{poly},\mathcal{F}_v}^{\operatorname{BMP}}(f\circ t_v)
    \]
    weakly against the compactly supported continuous functions on $|\mathcal{F}_v|$. By \Cref{prop:BMP MA = our poly MA in PL case} and the locality of tropical intersection, we have
    \[
    \operatorname{MA}_{\operatorname{poly},\widetilde{C}}(f_i)=(t_v)_*\left( \operatorname{MA}_{\operatorname{poly},\mathcal{F}_v}^{\operatorname{BMP}}(f_i\circ t_v) \right)
    \]
    in a fixed open neighborhood $U_v=(t_v)_*(U_0)$ of $v$. Therefore,
    \[
    \operatorname{MA}_{\operatorname{poly},\widetilde{C}}(f_i)\longrightarrow (t_v)_*\left( \operatorname{MA}_{\operatorname{poly},\mathcal{F}_v}^{\operatorname{BMP}}(f\circ t_v) \right)
    \]
    in the open neighborhood $U_v$ of $v$. The local limits agree on overlaps, since they are limits of the same piecewise affine measures there. They therefore define a positive Radon measure $\operatorname{MA}_{\operatorname{poly},\widetilde{C}}(f)$ on $|\widetilde{C}|$.

    Finally, let $\phi$ be a compactly supported continuous function on $|\widetilde{C}|$. Then the support of $\phi$ can be covered by finitely many such open neighborhoods $U_v$. A partition of unity and local convergence give us
    \[
    \int \phi\, \operatorname{d} \operatorname{MA}_{\operatorname{poly},\widetilde{C}}(f_i)\longrightarrow \int \phi\, \operatorname{d} \operatorname{MA}_{\operatorname{poly},\widetilde{C}}(f).
    \]
    The independence of the approximating sequence follows immediately, since the local limit is given by $\operatorname{MA}_{\operatorname{poly},\mathcal{F}_v}^{\operatorname{BMP}}(f\circ t_v)$, which depends only on $f$.
\end{proof}

We next establish the locality of this construction, which will be needed in \Cref{Section: main thm} when we apply polyhedral Monge-Amp{\`e}re measures to weighted open representatives of tropical varieties of analytic germs.

\begin{defn}
    Let $\widetilde{C}\subset N_\R$ and $f:N_\R\to\R$ be as in \Cref{Prop:MA poly weak convergence}. Let $\Omega\subset N_\R$ be an open subset. If $C\coloneqq\widetilde{C}|_\Omega$, then we set
    \[
    \operatorname{MA}_{\operatorname{poly},C}(f)\coloneqq\operatorname{MA}_{\operatorname{poly},\widetilde{C}}(f)\big|_{\Omega}.
    \]
\end{defn}

We need to check that this local polyhedral Monge-Amp{\`e}re measure is well-defined. Namely, if $\widetilde{C}'$ is another balanced $(\Z,\R)$-polyhedral complex with nonnegative weights such that $\widetilde{C}'|_\Omega=\widetilde{C}|_\Omega$ as tropical cycle, then we need to show that
\begin{equation}\label{eq: locality of MA poly}
    \operatorname{MA}_{\operatorname{poly},\widetilde{C}'}(f)\big|_{\Omega}=\operatorname{MA}_{\operatorname{poly},\widetilde{C}}(f)\big|_{\Omega}.
\end{equation}
Indeed, let $(f_i)_{i\geq 1}$ be the sequence of convex piecewise affine functions with rational slopes converging uniformly to $f$ as in \Cref{Prop:MA poly weak convergence}. The locality of tropical intersection product gives
\[
\operatorname{MA}_{\operatorname{poly},\widetilde{C}'}(f_i)\big|_{\Omega}=\operatorname{MA}_{\operatorname{poly},\widetilde{C}}(f_i)\big|_{\Omega};
\]
see \cite[Lemma 4.9]{MihatschTIT} and the local formalism of polyhedral currents in \cite[Section 2]{DeltaLubinTate}. Passing to the weak limit by \Cref{Prop:MA poly weak convergence} yields (\ref{eq: locality of MA poly}). Hence, the definition is independent of the choice of $\widetilde{C}$.

\paragraph{The tropical Green functions in our setting.}
We now verify that the tropical Green functions associated with toric metrics arising in our setting satisfy the assumptions of \Cref{Prop:MA poly weak convergence}. Let $Y$ be a toric variety with dense torus $T$ and let $L$ be a toric line bundle. Let $g_{\trop}:N_\R\to\R$ be the tropical Green function associated to a semipositive toric metric on $L$.

Let $\psi:N_\R\to\R$ be the piecewise linear virtual support function corresponding to $L$. Then $|g_{\trop}+\psi|$ is bounded. By \Cref{lem:approximated semipos metric by model metrics}, there exists a sequence of convex $\Gamma$-rational piecewise affine functions $(g_i)_{i\geq 1}$ such that $|g_i+\psi|$ is bounded and $(g_i)_{i\geq 1}$ converges uniformly to $g_{\trop}$ on $N_\R$. We get the desired result by taking $\gamma\coloneqq-\psi$.

\paragraph{Comparison with delta-forms and Bedford-Taylor products.} 
We conclude this section by identifying polyhedral Monge-Amp{\`e}re measures with the corresponding products of $\delta$-forms for piecewise smooth functions. This identification will be used later. 

\begin{prop}\label{prop: PS comparison of poly MA and delta form}
    Let $\Omega\subset N_\R$ be an open subset, and let $C$ be the restriction to $\Omega$ of a balanced $(\Z,\R)$-polyhedral complex in $N_\R$ of pure dimension $d$ with nonnegative weights. Let $f:N_\R\to\R$ be a convex piecewise smooth function satisfying the approximation hypothesis of \Cref{Prop:MA poly weak convergence}. Then
    \[
    \operatorname{MA}_{\operatorname{poly},C}(f)=(\ddc f)^{\wedge d}\wedge\delta_C.
    \]
\end{prop}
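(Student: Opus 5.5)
The plan is to compare both sides as limits of the same approximating sequence. Let $(f_i)_{i\geq1}$ be convex piecewise affine functions with rational slopes converging uniformly to $f$, with $|f_i-\gamma|$ bounded, as required by the hypothesis. By the definition following \Cref{Prop:MA poly weak convergence} and the locality statement (\ref{eq: locality of MA poly}), $\operatorname{MA}_{\operatorname{poly},C}(f)$ is the weak limit on $\Omega\cap|\widetilde{C}|$ of $\operatorname{MA}_{\operatorname{poly},C}(f_i)=(\ddc f_i)^{\wedge d}\wedge\delta_C$. It therefore suffices to prove the weak convergence of $\delta$-currents
\[
(\ddc f_i)^{\wedge d}\wedge\delta_C\longrightarrow(\ddc f)^{\wedge d}\wedge\delta_C .
\]
Here the right-hand side is a well-defined $\delta$-current, because $f$ is piecewise smooth and hence $\ddc f$ is a $\delta$-form in Mihatsch's calculus.

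The key steps, in order, are as follows.

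First, I reduce the statement to a local one. Both sides are local in $\Omega$: the left side by (\ref{eq: locality of MA poly}), and the right side because products of $\delta$-forms with $\delta_C$ are local. So I may shrink to a small neighborhood $U_v$ of a point $v\in|C|$. Using the local reduction to the rational case, I may also translate and assume that $C$ is a rational fan there.

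Second, I choose a good approximation. Rather than working with an arbitrary sequence, I will use a piecewise affine approximation adapted to the piecewise smooth structure of $f$. I fix a polyhedral complex $\Pi$ refining $C$ on whose cells $f$ is smooth. On each cell I take the convex piecewise affine function obtained as the maximum of tangent planes at finer and finer rational grids. This keeps rational slopes after a small perturbation, stays convex, and converges uniformly. Independence of the approximating sequence, from \Cref{Prop:MA poly weak convergence}, allows this choice.

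Third, I prove a telescoping estimate. I write
\[
(\ddc f_i)^{\wedge d}-(\ddc f)^{\wedge d}=\ddc(f_i-f)\wedge\sum_{k=0}^{d-1}(\ddc f_i)^{\wedge k}\wedge(\ddc f)^{\wedge d-1-k}.
\]
Pairing with a compactly supported smooth test function $\phi$, I move $\ddc$ onto $\phi$ by the Stokes/integration-by-parts formula for $\delta$-currents. The balancing of $C$ ensures that no boundary terms occur inside $\Omega$. This bounds the difference by
\[
\sup_{\operatorname{supp}\phi}|f_i-f|\cdot\int|\ddc\phi|\wedge(\text{mixed positive terms})\wedge\delta_C .
\]
The mixed masses are uniformly bounded on compacts by a Chern–Levine–Nirenberg-type estimate. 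I would derive this estimate from the positivity of $\delta$-forms in \cite{GKpositivity} together with the uniform local Lipschitz bounds that convex functions converging uniformly enjoy. Density of smooth test functions in compactly supported continuous ones, combined with positivity and the mass bounds, then gives weak convergence against continuous functions.

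The main obstacle is the third step: justifying the integration by parts and the mass bound in the mixed setting, where $\ddc f$ is a genuine piecewise smooth $\delta$-form and $\ddc f_i$ are polyhedral. If a direct CLN estimate is awkward in Mihatsch's formalism, my fallback is to identify both sides with the Bedford–Taylor product of \cite[Section 4.1]{BGJK21}. That product is known to be continuous under uniform convergence of convex (psh) functions, and it agrees with the $\delta$-form product for piecewise smooth and for piecewise affine functions. Convergence would then follow from continuity of the Bedford–Taylor product.
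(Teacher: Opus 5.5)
Your argument is correct, but your primary route differs from the paper's; your fallback is essentially the paper's proof. The paper does no estimates in the $\delta$-calculus. For a convex piecewise smooth (in particular piecewise affine) $h$ and a positive closed current $S$, it shows via the Leibniz rule and $d'S=d''S=0$ that $(\ddc h)\wedge S=\ddc(hS)$. This is exactly the inductive definition of the Bedford--Taylor product in \cite[Section 4.1]{BGJK21}. Hence both $(\ddc f_i)^{\wedge d}\wedge\delta_C$ and $(\ddc f)^{\wedge d}\wedge\delta_C$ are Bedford--Taylor products, and the convergence is simply quoted from \cite[Theorem 4.1.2]{BGJK21}.

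Your telescoping identity together with a Chern--Levine--Nirenberg bound amounts to proving that continuity theorem directly for the mixed products $(\ddc f_i)^{\wedge k}\wedge(\ddc f)^{\wedge(d-1-k)}\wedge\delta_C$. This buys self-containment and a convergence rate linear in $\sup|f_i-f|$ near $\operatorname{supp}\phi$. The price is two additional ingredients:
\begin{itemize}
\item positivity of all mixed products $(\ddc f_i)^{\wedge k}\wedge(\ddc f)^{\wedge j}\wedge\omega^{\wedge l}\wedge\delta_C$, with $\omega=\frac{1}{2}\ddc|x|^2$, taken from \cite{GKpositivity};
\item the comparison $-c\,\omega\le\ddc\phi\le c\,\omega$ for Lagerberg forms.
\end{itemize}
The paper needs only closedness and the Leibniz rule once the external theorem is accepted. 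The paper's route also gives, at no extra cost, the identification of $\operatorname{MA}_{\operatorname{poly},C}(f)$ with the Bedford--Taylor product for non-smooth $f$ in \Cref{prop: comparison of poly MA and BT product}; your direct estimate does not provide this.

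Your Steps 1 and 2 are unnecessary. The estimate in Step 3 works for any sequence supplied by the hypothesis, and Mihatsch's calculus already handles arbitrary polyhedra, so no reduction to the rational case is needed. Moreover, the tangent-plane construction in Step 2 only converges locally uniformly and does not obviously keep $|f_i-\gamma|$ bounded. It is better simply dropped.
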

\begin{proof}
    We use the Bedford-Taylor product defined in \cite[Section 4.1]{BGJK21} as a bridge. To distinguish it from the product of $\delta$-forms, we write $\wedge_{\operatorname{BT}}$ for the Bedford-Taylor product. Since $C$ has nonnegative weights, the current $\delta_C$ is positive. Since $C$ is balanced, the current $\delta_C$ is closed by \cite[Proposition 3.8]{Gub16DeltaC}. Thus $\delta_C$ is a positive closed Lagerberg current in the sense of \cite[Section 4.1]{BGJK21}.

    Let $(f_i)_{i\geq 1}$ be the sequence of convex piecewise affine functions with rational slopes converging uniformly to $f$ as in the statement of \Cref{Prop:MA poly weak convergence}. By \cite[Theorem 4.1.2]{BGJK21}, 
    \[
    (\ddc f_i)^{\wedge_{\operatorname{BT}}d}\wedge_{\operatorname{BT}}\delta_C\longrightarrow (\ddc f)^{\wedge_{\operatorname{BT}} d}\wedge_{\operatorname{BT}}\delta_C
    \]
    weakly as Lagerberg currents.

    We will compare the Bedford-Taylor product with the polyhedral Monge-Amp{\`e}re measure and the product of $\delta$-forms occurring in our setting. More precisely, we are going to show that
    \begin{equation}\label{eq: comparison BT and delta form}
        (\ddc f)^{\wedge_{\operatorname{BT}} d}\wedge_{\operatorname{BT}}\delta_C = (\ddc f)^{\wedge d}\wedge\delta_C
    \end{equation}
    and
    \begin{equation}\label{eq: comparison poly MA and BT}
        (\ddc f)^{\wedge_{\operatorname{BT}} d}\wedge_{\operatorname{BT}}\delta_C=\operatorname{MA}_{\operatorname{poly},C}(f).
    \end{equation}
    Let us first prove (\ref{eq: comparison BT and delta form}). Let $h:N_\R\to\R$ be a convex piecewise smooth function and let $S$ be a positive closed polyhedral current. By the proof of \cite[Theorem 4.1.2]{BGJK21}, the Bedford-Taylor product is defined inductively following \cite[Section III.3]{Dem12}, and thus
    \[
    (\ddc h)\wedge_{\operatorname{BT}} S=\ddc (hS).
    \]
    On the other hand, by the Leibniz rule for $\delta$-forms,
    \[
    (\ddc h)\wedge S=\ddc(h S)-h\ddc S+d''h\wedge d'S-d'h\wedge d''S.
    \]
    Since every $S$ is closed, we have $d'S=d''S=0$ and thus
    \[
    (\ddc h)\wedge S=\ddc (hS).
    \]
    This proves (\ref{eq: comparison BT and delta form}) inductively. It remains to show (\ref{eq: comparison poly MA and BT}). Note that for the piecewise affine function $f_i:N_\R\to\R$, we have
    \[
    \operatorname{MA}_{\operatorname{poly},C}(f_i)=(\ddc f_i)^{\wedge d}\wedge\delta_C.
    \]
    The left-hand side converges weakly to $\operatorname{MA}_{\operatorname{poly},C}(f)$ by \Cref{Prop:MA poly weak convergence}, while the right-hand side converges weakly to $(\ddc f)^{\wedge_{\operatorname{BT}} d}\wedge_{\operatorname{BT}}\delta_C$ by \cite[Theorem 4.1.2(ii)]{BGJK21}, which implies the desired result.
\end{proof}

\begin{rmk}\label{prop: comparison of poly MA and BT product}
    The proof of \Cref{prop: PS comparison of poly MA and delta form} also shows that the comparison with the Bedford-Taylor product does not require the piecewise smoothness assumption. More precisely, for the class of functions considered in \Cref{Prop:MA poly weak convergence}, the polyhedral Monge-Amp{\`e}re measure
    \[
    \operatorname{MA}_{\operatorname{poly},C}(f)
    \]
    agrees, as a Lagerberg current, with the Bedford-Taylor product
    \[
    (\ddc f)^{\wedge_{\operatorname{BT}}d}\wedge_{\operatorname{BT}}\delta_C
    \]
    defined in \cite[Section 4.1]{BGJK21}. The Bedford-Taylor product extends the wedge product of $\ddc$-currents to nonsmooth plurisubharmonic functions and is a basic tool in the tropical approach of \cite{BGJK21} to non-archimedean pluripotential theory and Monge-Amp{\`e}re equations.
\end{rmk}

\section{Tropicalized Monge-Amp{\`e}re measures}\label{Section: tropicalized MA}

We keep the notation introduced in \Cref{Section: NA MA measure as delta current}: let $Y=Y_\Sigma$ be a proper toric variety over $K$ of dimension $n$ with dense torus $T$ and let $X$ be a closed subvariety of $Y$ of dimension $d$ with embedding $i:X\hookrightarrow Y$ such that $X\cap T\neq\varnothing$. Let $L=\mathcal{O}(D)$ be a toric line bundle on $Y$ and let $\lVert\cdot\rVert$ be a semipositive toric metric on $L$ with tropical Green function $g_{\trop}:N_\R\to\R$. We denote the restrictions of the tropicalization map $\trop:Y_\Sigma^{\an}\to N_\Sigma$ to $T^{\an}$ and $X^{\an}$ by
\[
\trop_T:T^{\an}\to N_\R
\]
and
\[
\trop_X:X^{\an}\hookrightarrow Y^{\an}\to N_\Sigma,
\]
respectively. 

In this section, we are going to study the tropicalized Monge-Amp{\`e}re measures $(\trop_X)_*(c_1(L|_X,\lVert\cdot\rVert)^{\wedge d})$. As in \Cref{Section: NA MA measure as delta current}, for the Monge-Amp{\`e}re measure $c_1(L|_X,\lVert\cdot\rVert)^{\wedge d}$, the boundary $X\backslash T$ is a set of measure zero. Therefore, for the tropicalized measure $(\trop_X)_*(c_1(L|_X,\lVert\cdot\rVert)^{\wedge d})$, the boundary $N_\Sigma\backslash N_\R$ is a set of measure zero. Thus, it is sufficient for us to study $( (\trop_X)_*(c_1(L|_X,\lVert\cdot\rVert)^{\wedge d}) )|_{N_\R}$. 

Since $X\cap T$ is a closed subvariety of $T$, by the Bieri-Groves theorem and the work of Speyer-Sturmfels, see for instance \cite[Section 2.3]{tropicalzedVar}, we have that $\trop(X^{\an}\cap T^{\an})$ is a tropical variety, i.e. it is a polyhedral complex purely of dimension $d$ where every polyhedron $\sigma$ with dimension $d$ has nonnegative weight $m_\sigma$ satisfying the balancing condition. 

We first suppose that $\lVert\cdot\rVert$ is a toric model metric. Then $g_{\trop}$ is a convex $\Gamma$-rational piecewise affine function and $c_1(L|_X,\lVert\cdot\rVert)$ is a $\delta$-current, as explained in \Cref{Section: NA MA measure as delta current}. By (\ref{eq:id of first Chern form}) and (\ref{eq:id of MA of subvar}), we have
\[
\big( i_*\big(c_1(L|_X,\lVert\cdot\rVert)^{\wedge d}\big) \big)\big|_{T^{\an}}=(\trop_T)^\star(\ddc g_{\trop})^{\wedge d}\wedge\delta_{X\cap T},
\]
where $\delta_{X\cap T}$ is the current of integration on $X^{\an}\cap T^{\an}$. Then the compatibility of integration with tropicalization given in \cite[Example 7.2.5]{newArxivGGK} implies, in our setting, the following equality of currents:
\begin{equation}\label{lem:projection formula}
    \big( (\trop_X)_*(c_1(L|_X,\lVert\cdot\rVert)^{\wedge d}) \big)\big|_{N_\R}=(\ddc g_{\trop})^{\wedge d}\wedge\delta_{\trop(X^{\an}\cap T^{\an})},
\end{equation}
where $\delta_{\trop(X^{\an}\cap T^{\an})}$ is the current of integration along the weighted tropical cycle $\trop(X^{\an}\cap T^{\an})$.

The following comparison between tropical intersection products and real Monge-Amp{\`e}re measures is a standard consequence of known results. A closely related statement in the rational setting appears in \cite[Proposition 4.12]{MApoly}; the formulation below can be viewed as its $\Gamma$-rational version. We include a direct proof in the language of $\delta$-forms and $\delta$-currents to make our normalization explicit.

\begin{prop}\label{prop:ddf and MA on Rn}
    Let $f:N_\R\to\R$ be a convex $\Gamma$-rational piecewise affine function. Then
    \[
    (\ddc f)^{\wedge n}=n!\operatorname{MA}_\R(f).
    \]
\end{prop}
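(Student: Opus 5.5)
We prove the identity by comparing both sides locally on the vertices of a polyhedral complex adapted to $f$. Since $f$ is convex and $\Gamma$-rational piecewise affine, we choose a complete $(\Z,\Gamma)$-polyhedral complex $\Pi$ in $N_\R$ on whose $n$-dimensional cells $f$ is affine, with slopes in $M_\Q$. On the interior of each $n$-cell, $f$ is affine. There $\ddc f$ vanishes as a $\delta$-form, and $\partial f$ is a single point, so both sides vanish. Moreover, $(\ddc f)^{\wedge n}$ is a polyhedral current of codimension $n$, hence supported on the $0$-cells of $\Pi$, and it is a positive measure by the positivity cited in the definition of $\operatorname{MA}_{\operatorname{poly}}$. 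On the real side, $\partial f(\operatorname{relint}\tau)$ has dimension $n-\dim\tau$, so it has Lebesgue measure zero unless $\tau$ is a vertex. Thus both measures are finite sums of Dirac masses at the vertices $v$ of $\Pi$. It remains to show that at each $v$ the mass of $(\ddc f)^{\wedge n}$ equals $n!\operatorname{Vol}(\partial f(v))$, where $\operatorname{Vol}$ is normalized by the lattice $M$.

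Next we localize at a vertex. By translation (the local reduction in \Cref{Section: poly MA}) and subtraction of the constant $f(v)$, we may assume $v=0$. Near $0$, $f$ agrees with the convex rational piecewise linear function $h(y)=\max_j\langle a_j,y\rangle$, where $j$ runs over the pieces active at $v$. The tropical intersection product is local and translation-compatible (\cite[Lemma 4.9]{MihatschTIT}), so the mass of $(\ddc f)^{\wedge n}$ at $v$ equals the mass of $(\ddc h)^{\wedge n}$ at $0$. Likewise $\partial f(v)=\partial h(0)=\operatorname{conv}\{a_j\}=:P$, a rational polytope in $M_\R$. Since both sides scale by $k^n$ under $h\mapsto kh$, we may also clear denominators and assume $P$ is a lattice polytope.

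The key step is to show $\deg\big((\ddc h)^{\wedge n}\cdot[N_\R]\big)=n!\operatorname{Vol}(P)$ for the support function $h$ of $P$. For this we identify $\ddc$ of a $\delta$-form with the corresponding tropical intersection with a Cartier divisor, under Mihatsch's normalization. Then $(\ddc h)^{\wedge n}$ is the $n$-fold tropical self-intersection of the divisor of the support function of $P$. By the classical toric/tropical fact, this is $\deg(D_P^n)$ on the toric variety of the normal fan, which equals $n!\operatorname{Vol}(P)$. We can verify this by induction on $n$. First, $\ddc h\wedge\delta_{N_\R}$ is the codimension-one fan, the normal fan of $P$, with weight on each wall equal to the lattice length of the dual edge. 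Restricting $h$ to a facet-normal ray direction $\rho$ and iterating gives $\sum_F \ell(F)\cdot(n-1)!\operatorname{Vol}_{n-1}(F)$ over the facets $F$ of $P$, where $\ell(F)$ is the lattice distance of $F$ to $0$. The pyramid volume formula $n\operatorname{Vol}(P)=\sum_F\ell(F)\operatorname{Vol}_{n-1}(F)$ then closes the induction. Alternatively, we can invoke \cite[Proposition 4.12]{MApoly} directly in the rational case, since after localization everything is rational.

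The expected main obstacle is keeping the normalizations consistent. The sign and normalization convention for $\ddc$ on $\delta$-forms in \cite{MihatschTIT} must match the tropical weights, and the Lebesgue measure must be normalized by $M$ rather than by $M_\Q$. We would fix these by checking the case $n=1$ with $h=\max(0,y)$: there $\ddc h=\delta_0$ and $\operatorname{Vol}([0,1])=1$, which calibrates the constant. Positivity of the translation to the origin, together with the sign conventions, then yields the factor $n!$ uniformly.
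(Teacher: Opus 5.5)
Your proposal is correct and follows essentially the same route as the paper: both sides are reduced to Dirac masses at the vertices, then localized at a vertex to the fan case with $f(0)=0$, where $\partial f(0)=\operatorname{conv}(a_j)$ and the mass of $(\ddc h)^{\wedge n}$ at $0$ is $n!$ times the volume of this polytope. The only difference is in the cited sources: the paper takes the vertex decomposition of $\operatorname{MA}_\R(f)$ from \cite[Proposition 2.7.4]{BPS}, the subdifferential from \cite[Theorem 8.24]{Rockafellar}, and the final mass formula from \cite[Proposition 5.12]{Lagerberg}, whereas you close the last step via the toric/tropical degree formula $D_P^n=n!\operatorname{Vol}(P)$ or \cite[Proposition 4.12]{MApoly}, which is equally valid once you have localized to the rational situation.
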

\begin{proof}
    Suppose $f$ is piecewise affine on a polyhedral complex $\Sigma$ of dimension $n$. It follows from \cite[Proposition 2.7.4]{BPS} that
    \[
    \operatorname{MA}_\R(f)=\sum_{v}\operatorname{Vol}(\partial f(v))\delta_v,
    \]
    where $v$ ranges over all vertices of $\Sigma$. Using tropical intersection theory, see \cite[Lemma 4.9]{MihatschTIT} or \cite{AllermannRau} for details, we can deduce that $(\ddc f)^{\wedge n}$ is the sum of Dirac measures supported on the vertices of $\Sigma$ and is invariant under a translation of $f$ by an affine function. 

    Therefore, the desired result is a local property, i.e. it is sufficient to show that if $\Sigma$ is a fan and $f(0)=0$, then we have
    \[
    (\ddc f)^{\wedge n}(\{0\})= n! \operatorname{Vol}(\partial f(0)).
    \]
    Let $\Sigma_n$ be the set of maximal cones in $\Sigma$. Then $f(x)=\operatorname{max}_{\sigma\in\Sigma_n}\langle a_\sigma,x\rangle$ for some $a_\sigma$. In this case $f$ is the support function of the convex hull $\operatorname{conv}(a_\sigma:\sigma\in\Sigma_n)$ and it follows from \cite[Theorem 8.24]{Rockafellar} that
    \[
    \partial f(0)=\operatorname{conv}(a_\sigma:\sigma\in\Sigma_n).
    \]
    On the other hand, by \cite[Proposition 5.12]{Lagerberg} we have
    \[
    (\ddc f)^{\wedge n}(\{0\})=n!\operatorname{Vol}(\operatorname{conv}(a_\sigma:\sigma\in\Sigma_n)),
    \]
    which proves the proposition.
\end{proof}

This ambient identity also appears in \cite[Remark 6.2.12]{BGJK21} with a different normalization of the real Monge-Amp{\`e}re measure. We use the normalization introduced in \ref{convention:normalization of MA}. The following lemma follows immediately from \Cref{prop:ddf and MA on Rn}.

\begin{lem}\label{lem:ddf and MA}
    Let $f:N_\R\to\R$ be a convex $\Gamma$-rational piecewise affine function and let $C$ be a tropical cycle of dimension $d\leq n$. Then
    \[
    \big((\ddc f)^{\wedge d}\wedge\delta_C\big)\big|_\sigma = d!\, m_{\sigma}\, \operatorname{MA}_\R\!\left(f|_\sigma\right)
    \]
    for every maximal open face $\sigma$ of $C$ with weight $m_\sigma$.
\end{lem}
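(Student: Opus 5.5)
The plan is to localize to the open face $\sigma$ and reduce to \Cref{prop:ddf and MA on Rn} applied in the $d$-dimensional affine span of $\sigma$. Fix a maximal open face $\sigma$ of a representative of $C$; after refining, we may assume $f$ is affine on each face of a common refinement. The key point is that on the open set $\sigma$ (open in $|C|$), the current $\delta_C$ agrees with $m_\sigma\,\delta_{\sigma}$, where $\delta_\sigma$ is integration over $\sigma$ against the Lebesgue measure $\mu_\sigma$ normalized by $N_\sigma$. Indeed, choose an open subset $W\subset N_\R$ with $W\cap|C|=\sigma$, or at least with $W\cap|C|\subset\sigma$ locally around any given point of $\sigma$. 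By locality of the tropical intersection product (\cite[Lemma 4.9]{MihatschTIT}, as already used in \Cref{Section: poly MA}), we then have $\big((\ddc f)^{\wedge d}\wedge\delta_C\big)|_W=m_\sigma\big((\ddc f)^{\wedge d}\wedge\delta_{A_\sigma}\big)|_W$. Here $A_\sigma$ denotes the affine span of $\sigma$, viewed as a $d$-dimensional tropical cycle of weight one; it is trivially balanced.

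Next I identify $A_\sigma$ with $N_{\sigma,\R}$ via a translation $t_w$ with $w\in\sigma$, exactly as in the local reduction to the rational case. Under this identification, the product $(\ddc f)^{\wedge d}\wedge\delta_{A_\sigma}$ becomes the pullback $\iota^*$ of $(\ddc f)^{\wedge d}$ along the affine inclusion $\iota:N_{\sigma,\R}\to N_\R$, and this pullback equals $(\ddc(f\circ\iota))^{\wedge d}$ on $N_{\sigma,\R}$. The justification is that intersecting with a linear subspace of weight one is restriction of $\delta$-forms; equivalently, one can use the pullback description \eqref{eq: pullback description} with $g=\iota$, whose fibers are points. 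The function $f\circ\iota$ is convex, and it is piecewise affine with rational slopes with respect to the lattice $N_\sigma$, because restricting an element of $M_\Q$ to $N_{\sigma}$ gives an element of $N_\sigma^\vee\otimes\Q$. Its constant terms are real, so strictly speaking it is not $\Gamma$-rational after translation. However, the proof of \Cref{prop:ddf and MA on Rn} is local at vertices and invariant under adding affine functions, so it applies verbatim to this class of functions, with $n$ replaced by $d$ and the lattice $N$ replaced by $N_\sigma$.

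Applying \Cref{prop:ddf and MA on Rn} in this form gives $(\ddc(f\circ\iota))^{\wedge d}=d!\,\operatorname{MA}_\R(f\circ\iota)$, where $\operatorname{MA}_\R$ is computed with respect to $N_\sigma$. This is precisely the quantity $\operatorname{MA}_\R(f|_\sigma)$ in the convention \ref{convention:normalization of MA}. Multiplying by $m_\sigma$ and restricting back to $\sigma$ then yields the claim.

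I expect the main obstacle to be the normalization bookkeeping in the step $\delta_C|_\sigma=m_\sigma\,\delta_{A_\sigma}|_\sigma$ together with its compatibility with $\wedge$. Concretely, one must check that Mihatsch's product with $\delta_\sigma=[\sigma,\mu_\sigma]$ uses the lattice $N_\sigma$, so that the factor $d!$ and the volume normalization match the convention in \ref{convention:normalization of MA}. I would verify this by comparing the two sides on a single vertex of a fan inside $N_{\sigma,\R}\cong\R^d$, where both reduce to Lagerberg's formula \cite[Proposition 5.12]{Lagerberg}.
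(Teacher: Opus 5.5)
Your proposal is correct and takes the paper's route. The paper just says the lemma follows immediately from \Cref{prop:ddf and MA on Rn} by localizing on $\sigma$, where $\delta_C=m_\sigma[\sigma,\mu_\sigma]$ with $\mu_\sigma$ normalized by $N_\sigma$. Your restriction to the affine span, together with your remark that after translating by a real point of $\sigma$ the vertex-local proof of that proposition still applies to rational slopes with real constant terms, correctly spells out the details the paper leaves implicit.
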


\begin{ex}
    Let $Y_\Sigma=\mathbb{P}^2_K$ where $\Sigma$ is the complete fan given by the standard basis $e_1=(1,0),e_2=(0,1)$ and the vector $e_0\coloneqq-e_1-e_2$. Let $f=x_0+x_1+x_2\in K[x_0,x_1,x_2]$ and $X=V(f)$. In the standard torus of $\mathbb{P}^2_K$ let us consider $\bar{f}=x_1+x_2+1$ and the hypersurface $V(\bar{f})$. Note that
    \[
    \trop(\bar{f})=\operatorname{min}\{0,x_1,x_2\}.
    \]
    Kapranov's Theorem \cite[Theorem 3.1.3]{IntroTropical} and \cite[Proposition 3.7]{guide} imply that
    \[
    \trop(V(\bar{f})^{\an})=\{x\in\R^2:\text{the minimum in }\trop(\bar{f})\text{ is achieved at least twice}\}.
    \]
    Hence, the tropical variety $\trop(V(\bar{f})^{\an})$ is the collection of the $1$-dimensional rays $\tau_1=\operatorname{cone}((1,0)),\ \tau_2=\operatorname{cone}((0,1)),\ \tau_0=\operatorname{cone}((-1,-1))$ and the vertex $0$. Since the regular subdivision of the Newton polytope of $\bar{f}$ w.r.t. $(0,0,0)$ is trivial, by \cite[Lemma 3.4.6]{IntroTropical} the multiplicities of all the $3$ cones above are $1$.

    Let $L=O(1)$ and let $\lVert\cdot\rVert$ be the toric metric associated to
    \[
    g_{\trop}(x)=\operatorname{max}\left\{0,-x_1,-x_2,-\frac{1}{2}x_1-\frac{1}{2}x_2+1\right\}.
    \]
    Then $g_{\trop}$ is convex piecewise affine on the polyhedral complex with maximal polyhedrons
    \begin{align*}
        &\sigma_1=\{(x_1,x_2):x_1\leq 0,\ x_1-x_2+2\leq 0\},\\
        &\sigma_2=\{(x_1,x_2):x_1\geq 0,\ x_1+x_2-2\geq 0,\ x_2\geq 0\},\\
        &\sigma_3=\{(x_1,x_2):x_1-x_2+2\geq 0,\ x_1-x_2-2\leq 0,\ x_1+x_2-2\leq 0\},\\
        &\sigma_4=\{(x_1,x_2):x_1-x_2-2\geq 0,\ x_2\leq 0\}.
    \end{align*}
    This polyhedral complex is illustrated in \Cref{fig:polyhedral complex in example}.
    
    \begin{figure}
        \centering
           \begin{tikzpicture}[
    scale=0.67,
    transform shape,
    x=0.9cm,
    y=0.9cm,
    >=Latex,
    line cap=round,
    line join=round
]
    \colorlet{segcol}{blue!70!black}
    \colorlet{funccol}{red!75!black}

    \fill[blue!7]
        (-5,-3) -- (-5,5) -- (0,5) -- (0,2) -- cycle;

    \fill[red!7]
        (0,2) -- (0,5) -- (5,5) -- (5,0) -- (2,0) -- cycle;

    \fill[green!9]
        (-5,-5) -- (-5,-3) -- (0,2) -- (2,0) -- (-3,-5)
        -- cycle;

    \fill[orange!9]
        (-3,-5) -- (5,-5) -- (5,0) -- (2,0) -- cycle;

    \draw[very thick,->] (-5,0) -- (5,0)
        node[below right] {$x_1$};

    \draw[very thick,->] (0,-5) -- (0,5)
        node[above left] {$x_2$};

    \node[below left] at (0,0) {$O$};

    \foreach \x in {-2,2}
        \draw[very thick] (\x,0.10) -- (\x,-0.10)
            node[below=2pt] {$\x$};

    \foreach \y in {-2,2}
        \draw[very thick] (0.10,\y) -- (-0.10,\y)
            node[left=2pt] {$\y$};

    \draw[very thick,segcol] (-5,-3) -- (0,2);
    \draw[very thick,segcol] (0,2) -- (0,5);
    \draw[very thick,segcol] (0,2) -- (2,0);
    \draw[very thick,segcol] (2,0) -- (5,0);
    \draw[very thick,segcol] (-3,-5) -- (2,0);

    \node[font=\large,text=funccol] at (-2.5,3.1)
        {$\sigma_1$};

    \node[font=\large,text=funccol] at (2.7,3.0)
        {$\sigma_2$};

    \node[font=\large,text=funccol] at (-1.8,-1.8)
        {$\sigma_3$};

    \node[font=\large,text=funccol] at (2.7,-2.6)
        {$\sigma_4$};
\end{tikzpicture}

        \caption{The polyhedral decomposition $\sigma_1,\sigma_2,\sigma_3,\sigma_4$ of $\R^2$}
        \label{fig:polyhedral complex in example}
    \end{figure}
    
    Note that
    \[
    g_{\trop}|_{\sigma_1}=-x_1,\quad g_{\trop}|_{\sigma_2}=0,\quad g_{\trop}|_{\sigma_3}=-\frac{1}{2}x_1-\frac{1}{2}x_2+1,\quad g_{\trop}|_{\sigma_4}=-x_2.
    \]
    Let $\tau_i^\circ$ be the relative interior of $\tau_i$.
    By the tropical intersection theory we have
    \begin{align*}
        &\big((\ddc g_{\trop})\wedge\delta_{\trop(X^{\an}\cap T^{\an})}\big)\big|_{\tau_1^\circ}=\frac{1}{2}\delta_{(2,0)},\\
        &\big((\ddc g_{\trop})\wedge\delta_{\trop(X^{\an}\cap T^{\an})}\big)\big|_{\tau_2^\circ}=\frac{1}{2}\delta_{(0,2)},\\
        &\big((\ddc g_{\trop})\wedge\delta_{\trop(X^{\an}\cap T^{\an})}\big)\big|_{\tau_0^\circ}=0.
    \end{align*}
    By computing the volume of the sub-differential of $g_{\trop}$, we get
    \[
    \operatorname{MA}_\R\left(g_{\trop}|_{\tau_1^\circ}\right)=\frac{1}{2}\delta_{(2,0)},\quad \operatorname{MA}_\R\left(g_{\trop}|_{\tau_2^\circ}\right)=\frac{1}{2}\delta_{(0,2)},\quad\operatorname{MA}_\R\left(g_{\trop}|_{\tau_0^\circ}\right)=0.
    \]
\end{ex}

Since we are going to compute the tropicalized Monge-Amp{\`e}re measure for any semipositive toric metric corresponding to a convex tropical Green function, we need a generalization of \Cref{lem:ddf and MA} for convex functions. For this purpose, we give the definition of $(\ddc f)^d$ for a convex function $f$ following \cite[Section 2]{Lagerberg}.

It is well-known that given a convex function $f:N_\R\to\R$, a sequence of convex smooth functions $(f_i)_{i\geq 1}$ can be found that is monotone in $i$ and converges to $f$ pointwise. By Dini's theorem and \cite[Proposition 2.4]{Lagerberg}, after possibly passing to a subsequence, we have a well-defined limit
\[
(\ddc f)^{\wedge d}\coloneqq\lim\limits_{i\to\infty}(\ddc f_i)^{\wedge d}
\]
weakly as currents.

Let $C$ be a tropical cycle of dimension $d\leq n$ and let $\sigma$ be a maximal open face of $C$ with weight $m_\sigma$. Then
\[
\big((\ddc f)^{\wedge d}\wedge\delta_C\big)\big|_\sigma\coloneqq m_\sigma\, (\ddc (f|_\sigma))^{\wedge d}.
\]

\begin{prop}\label{prop:TMA on maximal faces}
    Let $f:N_\R\to\R$ be a convex function and let $C$ be a tropical cycle of dimension $d\leq n$. Then as currents
    \[
    \big((\ddc f)^{\wedge d}\wedge\delta_C\big)\big|_\sigma = d!\, m_{\sigma}\, \operatorname{MA}_\R\!\left(f|_\sigma\right)
    \]
    for every maximal open face $\sigma$ of $C$ with weight $m_\sigma$.
\end{prop}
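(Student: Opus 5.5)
By the definition just given, the left-hand side equals $m_\sigma(\ddc(f|_\sigma))^{\wedge d}$ on the open face $\sigma$. It therefore suffices to prove the purely local identity
\[
(\ddc h)^{\wedge d}=d!\,\operatorname{MA}_\R(h)
\]
on an open convex subset of the $d$-dimensional affine space spanned by $\sigma$. Here $h=f|_\sigma$, and both sides are computed with respect to the lattice $N_\sigma$. We then multiply by $m_\sigma$.

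After choosing a point of $\sigma$ and a $\Z$-basis of $N_\sigma$, we identify $\operatorname{aff}(\sigma)$ with $\R^d$ and the lattice with $\Z^d$. Under this identification, both the Lebesgue measure normalized by $N_\sigma$ and the volume on the dual $M_\sigma\otimes\R$ correspond to the standard ones. So the question reduces to a convex function $h$ on an open convex set $\Omega\subset\R^d$, namely the interior of $\sigma$, which is the restriction of a convex function on all of $\R^d$.

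I would proceed in three steps.

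\textbf{Step 1: the piecewise affine case.} Suppose first that $h$ is convex piecewise affine. This is the content of \Cref{prop:ddf and MA on Rn} applied in dimension $d$, or equivalently \Cref{lem:ddf and MA}. Only the argument in the proof of \Cref{prop:ddf and MA on Rn} is needed, namely \cite[Proposition 5.12]{Lagerberg} together with the subdifferential computation. That argument does not use $\Gamma$-rationality, so it holds for arbitrary real slopes.

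\textbf{Step 2: the smooth case.} Suppose next that $h$ is smooth. Then $(\ddc h)^{\wedge d}=d!\det(D^2h)\,dx$ with Lagerberg's normalization; the factor $d!$ matches Step 1 and can be checked on $\tfrac12|x|^2$. On the other hand, $\operatorname{MA}_\R(h)=\det(D^2h)\,dx$ by the change of variables formula for the gradient map $\nabla h$, whose image of a Borel set $E$ is exactly $\partial h(E)$. This gives the identity for smooth $h$.

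\textbf{Step 3: the general case.} For an arbitrary convex $f$, choose convex smooth $f_i$ converging to $f$ monotonically, hence locally uniformly by Dini's theorem, as in the definition preceding the proposition. Their restrictions $f_i|_\sigma$ converge locally uniformly on $\Omega$ to $h$. By \cite[Proposition 2.4]{Lagerberg}, $(\ddc f_i|_\sigma)^{\wedge d}\to(\ddc h)^{\wedge d}$ weakly. By Step 2, the left-hand sides equal $d!\operatorname{MA}_\R(f_i|_\sigma)$. The classical weak continuity of the Alexandrov Monge-Amp\`ere measure under locally uniform convergence of convex functions (see \cite{realMA}) gives $\operatorname{MA}_\R(f_i|_\sigma)\to\operatorname{MA}_\R(h)$ weakly on $\Omega$. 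Comparing the two limits proves the proposition.

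\textbf{Main obstacle.} I expect Step 3 to be the delicate part. One must check that the definition of $(\ddc(f|_\sigma))^{\wedge d}$ via restriction is independent of the approximating sequence, which follows from the same continuity. One must also check that the Alexandrov measure used is the one computed intrinsically on the open set $\sigma$ with lattice $N_\sigma$, as in convention \ref{convention:normalization of MA}, rather than the subdifferential of an extension to the whole space.

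If the weak continuity of $\operatorname{MA}_\R$ causes trouble, I would instead show the following directly. For $K\subset\Omega$ compact and $U\supset K$ open, one has
\[
\limsup_i \operatorname{MA}_\R(f_i|_\sigma)(K)\le\operatorname{MA}_\R(h)(K)
\quad\text{and}\quad
\liminf_i \operatorname{MA}_\R(f_i|_\sigma)(U)\ge\operatorname{MA}_\R(h)(U).
\]
The first inequality follows from upper semicontinuity of subdifferentials, the second from the standard fact that almost every slope in $\partial h(U)$ is attained by $f_i$ in $U$ for large $i$.
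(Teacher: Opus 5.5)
Your argument is correct. It has the same overall shape as the paper's proof: approximate $f$, establish the identity for the approximants, then pass to the limit on both sides, using continuity of $(\ddc\,\cdot)^{\wedge d}$ and the weak continuity of the Alexandrov measure.

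The difference lies in the approximants, and hence in the base identity. The paper approximates $f$ locally uniformly by convex $\Gamma$-rational piecewise affine functions $f_i$, which uses density of $\Gamma$. It takes the base case from \Cref{lem:ddf and MA}, i.e.\ Lagerberg's mixed-volume formula at vertices, and concludes with \cite[Proposition 2.7]{Lagerberg} and Rauch--Taylor. You instead use the monotone smooth approximants from the very definition of $(\ddc f)^{\wedge d}$. Your base case is therefore the classical identity $(\ddc h)^{\wedge d}=d!\det(D^2h)\,dx=d!\operatorname{MA}_\R(h)$ for smooth convex $h$. Your Step 1 then serves only as a normalization check and is never used in Step 3.

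What your route buys: it stays entirely within Lagerberg's framework, and no rationality or density of $\Gamma$ enters. You never need to identify the $\delta$-form product $\big((\ddc f_i)^{\wedge d}\wedge\delta_C\big)\big|_\sigma$ of a piecewise affine $f_i$ with the current $m_\sigma(\ddc(f_i|_\sigma))^{\wedge d}$ in the sense of the definition. The paper uses that compatibility tacitly when it combines \Cref{lem:ddf and MA} with Lagerberg's convergence.

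What the paper's route buys: it links the convex case directly to the tropical intersection product for $\Gamma$-rational piecewise affine functions. That is exactly the form needed in \Cref{cor:trop MA on sigma}, where $g_{\trop}$ is approximated by Green functions of toric model metrics.

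One small point in your Step 2: $\nabla h$ need not be injective when $h$ is not strictly convex. So ``change of variables'' should be read as the area formula, combined with Alexandrov's lemma that the slopes attained at two distinct points form a Lebesgue null set. With that, $\operatorname{Vol}(\partial h(E))=\int_E\det(D^2h)\,dx$ holds as you claim.
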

\begin{proof}
    For every convex function $f$, there exists a sequence of convex piecewise affine functions $(f_i)_{i\geq 1}$ uniformly converging to $f$ on every compact subset, see for instance \cite[Lemma 11]{PLtoConvex}. Since $K$ is algebraically closed, we have that the value group $\Gamma$ is divisible and hence dense in $\R$. Therefore, we may assume that each $f_i$ is a convex $\Gamma$-rational piecewise affine function.

    On one hand, the currents $(\ddc (f_i|_\sigma))^{\wedge d}$ weakly converge to $(\ddc (f|_\sigma))^{\wedge d}$ by \cite[Proposition 2.7]{Lagerberg}. On the other hand, by the continuity theorem of Rauch-Taylor \cite{RT77}, the real Monge-Amp{\`e}re measure $\operatorname{MA}_\R(f|_\sigma)$ is the weak limit of $\operatorname{MA}_\R(f_i|_\sigma)$. Therefore, the conclusion follows from \Cref{lem:ddf and MA} by taking limits on both sides.
\end{proof}

\begin{cor}\label{cor:trop MA on sigma}
    Let $\lVert\cdot\rVert$ be a semipositive toric metric on $L$ and let $g_{\trop}$ be its corresponding convex tropical Green function. Then
    \[
    (\trop_X)_*(c_1(L|_X,\lVert\cdot\rVert)^{\wedge d})\big|_\sigma= d!\, m_{\bar{\sigma}}\, \operatorname{MA}_\R(g_{\trop}|_\sigma)
    \]
    for every maximal open face $\sigma$ of $\trop(X^{\an}\cap T^{\an})$.
\end{cor}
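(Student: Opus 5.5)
We first treat the case of a semipositive toric model metric and then pass to general semipositive toric metrics by approximation.

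\textbf{Model case.} Let $\lVert\cdot\rVert$ be a semipositive toric model metric. Then $g_{\trop}$ is a convex $\Gamma$-rational piecewise affine function. By (\ref{lem:projection formula}) we have
\[
\big((\trop_X)_*(c_1(L|_X,\lVert\cdot\rVert)^{\wedge d})\big)\big|_{N_\R}=(\ddc g_{\trop})^{\wedge d}\wedge\delta_{\trop(X^{\an}\cap T^{\an})}.
\]
Let $\sigma$ be a maximal open face of $\trop(X^{\an}\cap T^{\an})$. Since $\sigma\subset N_\R$ is open in the support of the tropical variety, we restrict both sides to $\sigma$. \Cref{lem:ddf and MA}, applied with $C=\trop(X^{\an}\cap T^{\an})$, then gives the claimed identity $d!\,m_{\bar\sigma}\operatorname{MA}_\R(g_{\trop}|_\sigma)$. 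Here $m_{\bar\sigma}$ is the weight of the maximal face $\bar\sigma$, which by (\ref{eq:local weights pushforward}) is the pushforward weight.

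\textbf{General case.} Let $\lVert\cdot\rVert$ be an arbitrary semipositive toric metric. By \Cref{lem:approximated semipos metric by model metrics} there are semipositive toric model metrics $\lVert\cdot\rVert_i$ whose Green functions $g_i$ converge uniformly to $g_{\trop}$. The same lemma gives
\[
c_1(L|_X,\lVert\cdot\rVert_i)^{\wedge d}\to c_1(L|_X,\lVert\cdot\rVert)^{\wedge d}
\]
weakly on $X^{\an}$. The map $\trop_X$ is continuous and proper, since $X$ is proper and $N_\Sigma$ is compact. Weak convergence is therefore preserved under pushforward: for every compactly supported continuous $\phi$ on $\sigma$, extended by zero to $N_\Sigma$, the composition $\phi\circ\trop_X$ is a continuous compactly supported test function. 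It follows that the left-hand sides converge weakly on $\sigma$.

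For the right-hand sides, the Rauch–Taylor continuity theorem \cite{RT77} applies. Each $g_i|_\sigma$ is convex on the open polyhedron $\sigma$, and $g_i|_\sigma\to g_{\trop}|_\sigma$ locally uniformly. Hence $\operatorname{MA}_\R(g_i|_\sigma)\to\operatorname{MA}_\R(g_{\trop}|_\sigma)$ weakly on $\sigma$, computed with the lattice $N_\sigma$. This is exactly the argument already used in the proof of \Cref{prop:TMA on maximal faces}. Equating the two limits yields the corollary.

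\textbf{Main obstacle.} The delicate step is the weak convergence of the restricted pushforwards on the open set $\sigma$. Weak convergence of positive measures on $X^{\an}$ gives convergence only against continuous functions on $X^{\an}$, so we must make sure test functions supported in $\sigma$ pull back to admissible ones. This works because $\phi$ has compact support inside $\sigma\subset N_\R$, so $\phi$ extends by zero to a continuous function on $N_\Sigma$. No mass escapes to the boundary, since both sides only involve test functions supported in $\sigma$.
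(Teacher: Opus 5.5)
Your proof is correct and is essentially the paper's argument: both approximate by toric model metrics via \Cref{lem:approximated semipos metric by model metrics}, use the projection formula (\ref{lem:projection formula}) in the model case, and pass to weak limits. The only difference is that you apply \Cref{lem:ddf and MA} to each $g_i$ and then invoke Rauch--Taylor directly, whereas the paper first takes the limit of the currents $((\ddc g_i)^{\wedge d}\wedge\delta_{\trop(X^{\an}\cap T^{\an})})|_\sigma$ and then applies \Cref{prop:TMA on maximal faces}, which uses the same ingredients internally.

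One small correction: when $d<n$, the zero extension of $\phi$ is not continuous on $N_\Sigma$. It is, however, continuous on $\trop_X(X^{\an})$, in which $\sigma$ is relatively open, and that is all you need for $\phi\circ\trop_X$ to be continuous.
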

\begin{proof}
    Applying \Cref{lem:approximated semipos metric by model metrics}, we obtain a sequence of toric model metrics $(\lVert\cdot\rVert_i)_{i\geq 1}$ converging to $\lVert\cdot\rVert$ such that the corresponding sequence of tropical Green functions $(g_i)_{i\geq 1}$ converging to $g_{\trop}$ by uniform convergence. By the projection formula (\ref{lem:projection formula}), we get an identity of currents
    \[
    \big( (\trop_X)_*(c_1(L|_X,\lVert\cdot\rVert_i)^{\wedge d}) \big)\big|_{N_\R}=(\ddc g_i)^{\wedge d}\wedge\delta_{\trop(X^{\an}\cap T^{\an})}.
    \]
    Restricting to the open face $\sigma$ and taking limits on both sides, we have
    \[
    \big( (\trop_X)_*(c_1(L|_X,\lVert\cdot\rVert)^{\wedge d}) \big)\big|_{\sigma}=\left((\ddc g_{\trop})^{\wedge d}\wedge\delta_{\trop(X^{\an}\cap T^{\an})}\right)\big|_\sigma.
    \]
    Then the desired result follows from \Cref{prop:TMA on maximal faces}.
\end{proof}

We have also the following identification of tropicalized Monge-Amp{\`e}re measures with polyhedral Monge-Amp{\`e}re measures.

\begin{prop}\label{prop: trop MA}
    Let $\lVert\cdot\rVert$ be a semipositive toric metric on $L$ and let $g_{\trop}$ be its corresponding convex tropical Green function. Then
    \[
    (\trop_X)_*(c_1(L|_X,\lVert\cdot\rVert)^{\wedge d})\big|_{N_\R}=\operatorname{MA}_{\operatorname{poly},\trop(X^{\an}\cap T^{\an})}(g_{\trop}).
    \]
\end{prop}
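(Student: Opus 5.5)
The plan is to prove the identity first for semipositive toric model metrics and then pass to the general case by uniform approximation, using weak continuity on both sides.

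\emph{Step 1 (model case).} Suppose $\lVert\cdot\rVert$ is a semipositive toric model metric. Then $g_{\trop}$ is a convex $\Gamma$-rational piecewise affine function with $|g_{\trop}+\psi|$ bounded, so in particular it has rational slopes. The projection formula (\ref{lem:projection formula}) gives
\[
\big((\trop_X)_*(c_1(L|_X,\lVert\cdot\rVert)^{\wedge d})\big)\big|_{N_\R}=(\ddc g_{\trop})^{\wedge d}\wedge\delta_{\trop(X^{\an}\cap T^{\an})}.
\]
Here $\trop(X^{\an}\cap T^{\an})$ is a balanced weighted $(\Z,\Gamma)$-polyhedral complex, hence a $(\Z,\R)$-polyhedral complex, of pure dimension $d$ with nonnegative weights. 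The right-hand side is therefore, by definition, $\operatorname{MA}_{\operatorname{poly},\trop(X^{\an}\cap T^{\an})}(g_{\trop})$. We also note that the limit definition of \Cref{Prop:MA poly weak convergence} agrees with this piecewise affine definition: take the constant sequence $f_i=g_{\trop}$ with $\gamma=-\psi$. So the model case is immediate.

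\emph{Step 2 (approximation).} For a general semipositive toric metric, \Cref{lem:approximated semipos metric by model metrics} provides semipositive toric model metrics $\lVert\cdot\rVert_i$ whose tropical Green functions $g_i$ are convex and $\Gamma$-rational piecewise affine, satisfy $|g_i+\psi|$ bounded, and converge uniformly to $g_{\trop}$. The same lemma gives
\[
c_1(L|_X,\lVert\cdot\rVert_i)^{\wedge d}\to c_1(L|_X,\lVert\cdot\rVert)^{\wedge d}
\]
weakly on $X^{\an}$. On the polyhedral side, \Cref{Prop:MA poly weak convergence} with $\gamma=-\psi$ gives
\[
\operatorname{MA}_{\operatorname{poly},\trop(X^{\an}\cap T^{\an})}(g_i)\to\operatorname{MA}_{\operatorname{poly},\trop(X^{\an}\cap T^{\an})}(g_{\trop})
\]
weakly against compactly supported continuous functions on the support of the tropical variety.

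\emph{Step 3 (main obstacle: pushforward of the weak limit).} It remains to show that $(\trop_X)_*$ commutes with the weak limit after restricting to $N_\R$. Take $\phi\in C_c(N_\R)$. Since $\trop_X$ is proper, the function $\phi\circ\trop_X$ on $X^{\an}\cap T^{\an}$ has compact support, and it extends by zero to a continuous function on $X^{\an}$, because $\operatorname{supp}\phi$ is a compact subset of $N_\R$ that stays away from $N_\Sigma\setminus N_\R$. Then
\[
\int\phi\,\mathrm d(\trop_X)_*\mu_i=\int\phi\circ\trop_X\,\mathrm d\mu_i\longrightarrow\int\phi\circ\trop_X\,\mathrm d\mu ,
\]
where $\mu_i$ and $\mu$ denote the Monge-Amp{\`e}re measures above. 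Since $X^{\an}$ is compact, weak convergence of Radon measures applies directly to the continuous function $\phi\circ\trop_X$. The tropicalized measure is supported on $\trop(X^{\an}\cap T^{\an})$, and restricting $\phi$ to this support gives an arbitrary compactly supported continuous function there, by Tietze extension. Hence both sides are limits of the same sequence of measures, and they coincide by uniqueness of weak limits.
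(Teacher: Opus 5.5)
Your proposal is correct and follows the paper's proof. The model case comes from the projection formula (\ref{lem:projection formula}) together with the piecewise affine definition of $\operatorname{MA}_{\operatorname{poly}}$, and the general case comes by approximation via \Cref{lem:approximated semipos metric by model metrics} and \Cref{Prop:MA poly weak convergence} with $\gamma=-\psi$. Your Step 3 also supplies a detail the paper leaves implicit: extending $\phi\in C_c(N_\R)$ by zero to $N_\Sigma$ and using compactness of $X^{\an}$ justifies passing the weak limit through $(\trop_X)_*$.
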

\begin{proof}
    The proof is essentially the same as that of \Cref{cor:trop MA on sigma}. In the model case, the conclusion follows from the projection formula (\ref{lem:projection formula}) and the piecewise affine definition of the polyhedral Monge-Amp{\`e}re measure. The general case follows by approximation, using the weak continuity of non-archimedean Monge-Amp{\`e}re measures and \Cref{Prop:MA poly weak convergence}.
\end{proof}

\begin{rmk}\label{rmk: MApoly}
    
    We remark that the polyhedral identity underlying \Cref{cor:trop MA on sigma} is the $\Gamma$-rational analogue of \cite[Proposition 4.20]{MApoly}. More precisely, using the notion of polyhedral Monge-Amp{\`e}re measures given in \Cref{Section: poly MA}, we have
    \[
    \operatorname{MA}_{\operatorname{poly},\sigma}(g_{\trop}|_{\sigma})=d!\, m_{\bar{\sigma}}\, \operatorname{MA}_\R(g_{\trop}|_\sigma)
    \]
    for every maximal open face $\sigma$ of $\trop(X^{\an}\cap T^{\an})$.
    
    Indeed, in the proof of \cite[Proposition 4.20]{MApoly}, we may replace the piecewise affine case used there with its $\Gamma$-rational counterpart given in \Cref{prop:ddf and MA on Rn}. The general case follows from \Cref{Prop:MA poly weak convergence}, together with the weak continuity of the classical Monge-Amp{\`e}re measure.

    Combining this identity with \Cref{prop: trop MA} and the locality of polyhedral Monge-Amp{\`e}re measures we established in \Cref{Section: poly MA} yields \Cref{cor:trop MA on sigma}.
\end{rmk}

\begin{rmk}\label{counterexample: lower dimensional face}
    In general, \Cref{cor:trop MA on sigma} can not be extended to all open faces of $\trop(X^{\an}\cap T^{\an})$. We illustrate this by constructing a counterexample of dimension $2$.

    Let $C=[(\R^2,1)]$ be the trivial tropical cycle and let
    \[
    f=\operatorname{max}\{0,x+y-1,2x+y-3,-3y,x-3y-1,2x-3y-3\}.
    \]
    Then $f$ is convex because it is the maximum of a set of convex functions.

\begin{figure}[H]
\centering
\begin{tikzpicture}[scale=0.67,transform shape,x=1.1cm,y=1.1cm,>=Latex,line cap=round,line join=round]
  \colorlet{segcol}{blue!70!black}   
  \colorlet{funccol}{red!75!black}   
  \tikzset{func/.style={text=funccol}}
  \draw[very thick,->] (-1,0) -- (9,0) node[below right] {$x$};
  \draw[very thick,->] (0,-3.5) -- (0,4.5) node[above left] {$y$};
  \node[below left] at (0,0) {$O$};

  \coordinate (P1) at (3,0);
  \coordinate (P2) at (6,0);

  \draw[very thick,segcol] (P1) -- ++(0,-3.0);
  \draw[very thick,segcol] (P2) -- ++(0,-3.0);
  \draw[very thick,segcol] (P2) -- ++(0,3.0);

  \draw[very thick,segcol] (0,3) -- (P1);

  \draw[very thick,segcol] (0,0) -- (P1);     
  \draw[very thick,segcol] (P1) -- (P2);      
  \draw[very thick,segcol] (P2) -- (8.5,0);   

  \node[text=segcol] at (1.5,0.18) {$\alpha_1$};
  \node[text=segcol] at (4.5,0.18) {$\alpha_2$};
  \node[text=segcol] at (7.2,0.18) {$\alpha_3$};

  \node[rotate=-45,text=segcol] at (1.7,1.55) {$\beta_1$};
  \node[right,text=segcol] at ($(P2)+(0,1.4)$) {$\beta_2$};
  \node[left,text=segcol]  at ($(P1)+(0,-1.6)$) {$\beta_3$};
  \node[right,text=segcol] at ($(P2)+(0,-1.6)$) {$\beta_4$};

  \node[func] at (1.2,1.0) {$0$};
  \node[func] at (3.7,1.7) {$x+y-1$};
  \node[func] at (7.4,1.8) {$2x+y-3$};

  \node[func] at (1.2,-1.4) {$-3y$};
  \node[func] at (4.3,-1.25) {$x-3y-1$};
  \node[func] at (7.4,-1.4) {$2x-3y-3$};

  \fill[green!50!black] (P1) circle (2.2pt);
  \fill[green!50!black] (P2) circle (2.2pt);
  \node[text=green!50!black,below right] at (P1) {$P_1$};
  \node[text=green!50!black,below right] at (P2) {$P_2$};
\end{tikzpicture}
\caption{The polyhedral complex given by $f$ and $C$}
\label{fig:counterexample}
\end{figure}

For $x>0$, the polyhedral complex given by $f$ and $C$ is shown in \Cref{fig:counterexample}. We denote the two vertices by $P_1$ and $P_2$; the one-dimensional faces on the $x$-axis by $\alpha_1,\alpha_2$ and $\alpha_3$; the other one-dimensional faces by $\beta_1,\beta_2,\beta_3$ and $\beta_4$.

We compute $\big((\ddc f)^{\wedge 2}\wedge C\big)(\{(x,y):x>0,y=0\})$ and $\operatorname{MA}(f|_{\{(x,y):x>0,y=0\}})$ using tropical intersection theory \cite[Lemma 4.9]{MihatschTIT}. 

Let $n_{\alpha_i,P_j}$ (resp. $n_{\beta_i,P_j}$) be the normal vector of $\alpha_i$ relative to $P_j$ (resp. $\beta_i$ relative to $P_j$). Denote the slopes
\[
k_{\alpha_i,P_j}\coloneqq\frac{\partial f}{\partial n_{\alpha_i,P_j}},\quad k_{\beta_i,P_j}\coloneqq\frac{\partial f}{\partial n_{\beta_i,P_j}}.
\]
Then
\begin{align*}
    k_{\alpha_1,P_1}=0,\ k_{\alpha_2,P_1}=1,\ k_{\alpha_2,P_2}=-1,\ k_{\alpha_3,P_2}=2,\\ k_{\beta_1,P_1}=0,\ k_{\beta_3,P_1}=3,\ k_{\beta_2,P_2}=1,\ k_{\beta_4,P_2}=3.
\end{align*}
Let $m(\alpha_i)$ (resp. $m(\beta_i)$) be the weight of the one-dimensional faces $\alpha_i$ (resp. $\beta_i$) in the tropical cycle $(\ddc f)\wedge C$. One can check that 
\[
m(\alpha_1)=3,\ m(\alpha_2)=4,\ m(\alpha_3)=4,\ m(\beta_1)=m(\beta_2)=m(\beta_3)=m(\beta_4)=1.
\]
It follows that
\[
\big((\ddc f)^{\wedge 2}\wedge C\big)|_{\{(x,y):x>0,y=0\}}=7\delta_{P_1}+8\delta_{P_2}
\]
and
\[
\operatorname{MA}_\R\left(f|_{\{(x,y):x>0,y=0\}}\right)=\delta_{P_1}+\delta_{P_2}.
\]
Hence, for the one-dimensional open face $\tau=\{(x,y):x>0,y=0\}$ of $C$, there does not exist a constant $m$ such that
\[
\big((\ddc f)^{\wedge 2}\wedge\delta_C\big)\big|_\tau = m\, \operatorname{MA}_\R\left(f|_\tau\right).
\]

However, the precise coefficient of $(\ddc f)^d\wedge C$ can always be computed using tropical intersection theory. In addition to \cite[Lemma 4.9]{MihatschTIT}, one may also apply the fan displacement rule \cite[Section 4.3]{MihatschTIT} or the Allermann-Rau's intersection product \cite[Section 3]{AllermannRau}.
     
\end{rmk}

We will next introduce a class of toric metrics for which the corresponding tropicalized Monge-Amp{\`e}re measure can be studied face by face. Before doing so, we notice that (\ref{lem:projection formula}) can be extended to toric metrics whose tropical Green functions are convex and piecewise smooth.

\begin{cor}\label{cor: PS projection formula}
    Let $\lVert\cdot\rVert$ be a semipositive toric metric on $L$ such that its corresponding tropical Green function $g_{\trop}:N_\R\to\R$ is piecewise smooth. Then
    \[
    (\trop_X)_*(c_1(L|_X,\lVert\cdot\rVert)^{\wedge d})\big|_{N_\R}=(\ddc g_{\trop})^{\wedge d}\wedge\delta_{\trop(X^{\an}\cap T^{\an})}.
    \]
\end{cor}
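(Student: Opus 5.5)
The plan is to combine \Cref{prop: trop MA} with \Cref{prop: PS comparison of poly MA and delta form}. The first identifies the left-hand side with a polyhedral Monge-Amp{\`e}re measure on the tropical variety. The second identifies that measure, for convex piecewise smooth functions, with the product of $\delta$-forms $(\ddc g_{\trop})^{\wedge d}\wedge\delta_C$. No new approximation argument on the non-archimedean side is needed, since it was already carried out in \Cref{prop: trop MA}.

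First, set $\widetilde{C}\coloneqq\trop(X^{\an}\cap T^{\an})$ and represent it by a balanced weighted $(\Z,\Gamma)$-polyhedral complex of pure dimension $d$. By the Bieri-Groves theorem it has nonnegative, in fact positive, weights on its $d$-dimensional faces. We apply \Cref{prop: PS comparison of poly MA and delta form} with $\Omega=N_\R$, so that $C=\widetilde{C}$.

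Second, we verify the hypotheses on $g_{\trop}$. It is convex because the metric is semipositive, and it is piecewise smooth by assumption. It also satisfies the approximation hypothesis of \Cref{Prop:MA poly weak convergence}: as explained in the paragraph on tropical Green functions at the end of \Cref{Section: poly MA}, we take $\gamma\coloneqq-\psi$, which is a convex piecewise linear function with integral slopes since $L$ is globally generated. We then take the convex $\Gamma$-rational piecewise affine approximants $g_i$ from \Cref{lem:approximated semipos metric by model metrics}. These have rational slopes, satisfy $|g_i+\psi|$ bounded, and converge uniformly to $g_{\trop}$. Hence \Cref{prop: PS comparison of poly MA and delta form} gives
\[
\operatorname{MA}_{\operatorname{poly},\trop(X^{\an}\cap T^{\an})}(g_{\trop})=(\ddc g_{\trop})^{\wedge d}\wedge\delta_{\trop(X^{\an}\cap T^{\an})}.
\]

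Third, \Cref{prop: trop MA} identifies $(\trop_X)_*(c_1(L|_X,\lVert\cdot\rVert)^{\wedge d})|_{N_\R}$ with the left-hand side of this display. Chaining the two equalities gives the claim.

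The main point to check is that the $\delta$-form product on the right-hand side makes sense for a piecewise smooth rather than piecewise affine $g_{\trop}$. It must also be the same object that appears in \Cref{prop: PS comparison of poly MA and delta form}. Here piecewise smooth should mean smooth on the cells of a $(\Z,\R)$-polyhedral complex, so that $\ddc g_{\trop}$ is a $\delta$-form in Mihatsch's sense. The equality itself is then exactly that proposition, proved there via the Bedford-Taylor product and the Leibniz rule using closedness of $\delta_C$. If one is worried about subtleties of restricting to $N_\R$, one can argue locally on relatively compact open subsets of $N_\R$ using the locality \eqref{eq: locality of MA poly}.
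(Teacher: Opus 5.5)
Your proposal is correct and is essentially the paper's own proof: the paper also obtains the corollary by chaining \Cref{prop: trop MA} with \Cref{prop: PS comparison of poly MA and delta form}. Your check of the approximation hypothesis, using $\gamma=-\psi$ and the approximants from \Cref{lem:approximated semipos metric by model metrics}, is exactly what the paper records in the paragraph on tropical Green functions at the end of \Cref{Section: poly MA}.
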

\begin{proof}
    This follows immediately from \Cref{prop: trop MA} and \Cref{prop: PS comparison of poly MA and delta form}.
\end{proof}

\begin{rmk}\label{tropical MA special class}
    We next give an explicit facewise formula of tropicalized Monge-Amp{\`e}re measures for a special class of semipositive toric metrics.

    Let us consider the kind of piecewise smooth tropical Green function $g_{\trop}$ such that $g_{\trop}=-\lambda\psi+\rho$ for $\lambda\geq 0$ and a smooth function $\rho$ on $N_\R$. After passing to a refinement of $\trop(X^{\an}\cap T^{\an})$ if necessary, we may assume that $\psi$ is affine on every face of $\trop(X^{\an}\cap T^{\an})$. Then
    \[
    (\ddc g_{\trop})^{\wedge d}\wedge\delta_{\trop(X^{\an}\cap T^{\an})}=\sum_{k=0}^d\binom{d}{k}\lambda^k(-\ddc \psi)^{\wedge k}\wedge(\ddc \rho)^{\wedge(d-k)}\wedge\delta_{\trop(X^{\an}\cap T^{\an})}.
    \]
    We denote the $(d-k)$-dimensional tropical cycle $(-\ddc \psi)^{\wedge k}\wedge\delta_{\trop(X^{\an}\cap T^{\an})}$ by $C_k$. Then by the smoothness of $\rho$, for every maximal open face $\tau$ of $C_k$ we have
    \[
    \big((\ddc \rho)^{\wedge(d-k)}\wedge\delta_{C_k}\big)\big|_\tau=(d-k)!\, m_\tau\, \operatorname{MA}_\R(\rho|_\tau),
    \]
    where $m_\tau$ is the weight of $\tau$ in $C_k$ and $\operatorname{MA}(\rho|_\tau)$ is defined to be the full-dimensional differential form whose coefficient is the determinant of the Hessian matrix of $\rho|_\tau$ \cite[Lemma 1.7.1]{2Antoine}. If we regard the tropical cycle $\trop(X^{\an}\cap T^{an})$ as the union of all the faces of $C_k$ for all $k$, then
    \[
    \big((\ddc g_{\trop})^{\wedge d}\wedge\delta_{\trop(X^{\an}\cap T^{\an})}\big)\big|_\tau=\frac{d!}{k!}\, \lambda^k\, m_\tau\, \operatorname{MA}_\R(\rho|_\tau)
    \]
    for every open face $\tau$ of $\trop(X^{\an}\cap T^{\an})$ of dimension $d-k$. Applying \Cref{cor: PS projection formula}, we get
    \[
    \trop_*(c_1(L|_X,\lVert\cdot\rVert)^{\wedge d})\big|_\tau=\frac{d!}{k!}\, \lambda^k\, m_\tau\, \operatorname{MA}_\R(\rho|_\tau).
    \]
    When $\lambda=0$, we have $g_{\trop}=\rho$ is a smooth function, and only the term $k=0$ occurs in the above formula. In particular, the corresponding measure has no contribution on lower-dimensional faces.
\end{rmk}

\section{Monge-Amp{\`e}re measures on tropical skeletons}\label{Section: main thm}

We keep the notation and assumptions of \Cref{Section: NA MA measure as delta current} and \Cref{Section: tropicalized MA}. Thus, $X$ is a $d$-dimensional closed subvariety of a proper toric variety $Y=Y_\Sigma$ with dense torus $T$ over $K$. We denote the closed embedding by $i:X\hookrightarrow Y$ and assume that $X\cap T\neq\varnothing$. Since only the restricted tropicalization map $(X\cap T)^{\an}\xhookrightarrow{i^{\an}}T^{\an}\to N_\R$ will be used in this section, we denote it simply by $\trop$ throughout.

Let $L$ be a toric line bundle on $Y$ and let $\lVert\cdot\rVert$ be a semipositive toric metric on $L$ with associated convex tropical Green function $g_{\trop}:N_\R\to\R$. 

In \Cref{Section: tropicalized MA}, we give a description of the tropicalized Monge-Amp{\`e}re measure in terms of the real Monge-Amp{\`e}re measure. In this section, we are going to describe the Monge-Amp{\`e}re measure $c_1(L|_X,\lVert\cdot\rVert)^{\wedge d}$ locally as the tropical pullback of a polyhedral Monge-Amp{\`e}re measure discussed in \Cref{Section: poly MA} of the tropicalization of a local germ. On each local maximal open face, the latter is a weighted multiple of the real Monge-Amp{\`e}re measure. By the projection formula, the tropical pullback description for $c_1(L|_X,\lVert\cdot\rVert)^{\wedge d}$ leads to a description of the tropicalized Monge-Amp{\`e}re measure in terms of the polyhedral Monge-Amp{\`e}re measure.

As we have seen in \Cref{Section: NA MA measure as delta current}, the boundary $X\backslash T$ has measure zero with respect to $c_1(L|_X,\lVert\cdot\rVert)^{\wedge d}$, and the restriction $c_1(L|_X,\lVert\cdot\rVert)^{\wedge d}|_{X^{\an}\cap T^{\an}}$ is supported on the tropical skeleton $\Sigma(X^{\an}\cap T^{\an},i^{\an}|_{X^{\an}\cap T^{\an}})$.

For simplicity, in this section, we will write
\[
\Sigma(U)\coloneqq\Sigma(U,i^{\an}|_U)
\]
for every analytic space $U$ contained in $X^{\an}\cap T^{\an}$.

The counterexample in \Cref{counterexample: lower dimensional face} motivates us to study $c_1(L|_X,\lVert\cdot\rVert)^{\wedge d}$ in some local polyhedral charts of $\Sigma(X^{\an}\cap T^{\an})$ at a specific point. The definition of local polyhedral charts was given in \Cref{subsection:local charts of tropical skeletons}. There is another reason for us to work with local polyhedral charts. The tropicalization map $\trop$ need not be injective on the tropical skeleton, and distinct branches may map to the same subset of $N_\R$. In particular, different points of the tropical skeleton may have the same tropicalization, while carrying different local geometric data. The following example illustrates this phenomenon. 

\begin{ex}\label{example: different branches}
    This example is a revisit of \cite[Example 2.6]{BPRpreprint} from a local point of view. The skeleton considered there coincides with the tropical skeleton in our sense, since the tropicalization map below is nonconstant on every edge; see \cite[Remark 3.14]{TropicalLinearIso}. Let $p$ be a prime and let $K=\C_p$, with the valuation normalized by $\operatorname{val}(p)=1$. Consider the curve $X\subset\mathbb{G}_m^2$ given by the parametrization
    \[
    x(t)=t(t-p)\quad\text{and}\quad y(t)=t-1.
    \]
    The tropicalization map from the tropical skeleton of $X^{\an}$ to $N_\R$ is illustrated by \Cref{fig:example trop from skeletons}.

    \begin{figure}[!htbp]
        \centering
\begin{tikzpicture}[
  scale=0.67,
  transform shape,
  x=1.1cm,
  y=1.1cm,
  >=Latex,
  line cap=round,
  line join=round
]
  \colorlet{eone}{magenta!75!black}
  \colorlet{etwo}{blue!70!black}
  \colorlet{ethree}{green!50!black}
  \colorlet{efour}{orange!85!black}
  \colorlet{efive}{brown!75!black}

  \coordinate (z)  at (0,0);
  \coordinate (zp) at (3,0);

  \draw[very thick,efour]
    (z) -- (0,2.6);

  \draw[very thick,efive]
    (z) -- (-2.2,-1.5);

  \draw[very thick,etwo]
    (z) -- (zp);

  \draw[very thick,ethree]
    (zp) -- (4.45,2.45);

  \draw[very thick,eone]
    (zp) -- (4.55,-2.05);

  \fill (z)  circle (2pt);
  \fill (zp) circle (2pt);

  \node[below right=2pt] at (z)
    {$\zeta$};

  \node[below left] at (zp)
    {$\zeta'$};

  \node[above] at (0,2.6)
    {$1$};

  \node[above right] at (4.45,2.45)
    {$0$};

  \node[below right] at (4.55,-2.05)
    {$p$};

  \node[below left] at (-2.2,-1.5)
    {$\infty$};

  \node[text=efour,left] at (0,1.35)
    {$e_4$};

  \node[text=efive,above left] at (-1.1,-0.75)
    {$e_5$};

  \node[text=etwo,above] at (1.5,0)
    {$e_2$};

  \node[text=ethree,above left] at (3.75,1.2)
    {$e_3$};

  \node[text=eone,above right] at (3.8,-1.05)
    {$e_1$};

  \draw[very thick,->]
    (5.25,0) -- (6.45,0)
    node[midway,above=2pt] {$\trop$};

  \begin{scope}[shift={(9.0,0)}]
    \draw[very thick]
      (-2.2,-1.5) -- (0,0) -- (0,2.8);

    \draw[very thick]
      (0,0) -- (2,0);

    \draw[very thick,efour]
      (0,0) -- (0,2.8);

    \draw[very thick,efive]
      (0,0) -- (-2.2,-1.5);

    \draw[very thick,etwo]
      (0,0) -- (2,0);

    \begin{scope}
      \clip (1.9,0) rectangle (5.1,0.15);
      \draw[line width=2.8pt,ethree]
        (2,0) -- (5,0);
    \end{scope}

    \begin{scope}
      \clip (1.9,-0.15) rectangle (5.1,0);
      \draw[line width=2.8pt,eone]
        (2,0) -- (5,0);
    \end{scope}

    \fill (0,0) circle (2pt);
    \fill (2,0) circle (2pt);

    \node[below right] at (0,0)
      {$(0,0)$};

    \node[below] at (2,0)
      {$(2,0)$};

    \node[right] at (0,1)
      {$(0,1)$};

    \node[below=9pt] at (-1.45,-1.0)
      {$(-2,-1)$};
  \end{scope}
\end{tikzpicture}
        \caption{The tropicalization of the curve $X$}
        \label{fig:example trop from skeletons}
    \end{figure}

    By the parameter $t$, we can identify $X$ with $\mathbb{P}^1_K\backslash\{0,1,p,\infty\}$. Recall that, for $a\in K$ and $r>0$, we denote by $\zeta_{a,r}$ the weighted Gauss point corresponding to the multiplicative seminorm
    \[
    \left| \sum_ic_i(t-a)^i \right|_{\zeta_{a,r}}=\operatorname{max}_i |c_i|r^i.
    \]
    In \Cref{fig:example trop from skeletons}, we have
    \begin{align*}
        &\zeta'=\zeta_{0,|p|},\\
        &e_1=\{\zeta_{p,|p|^\alpha}:\alpha\geq 1\},\\
        &e_2=\{\zeta_{0,|p|^\alpha}:0\leq\alpha\leq 1\},\\
        &e_3=\{\zeta_{0,|p|^\alpha}:\alpha\geq 1\}.
    \end{align*}
    Therefore,
    \[
    \operatorname{val}_{\zeta'}(t-b)=\operatorname{min}\{1,\operatorname{val}(-b)\}
    \]
    and thus
    \[
    \trop(\zeta')=\big( \operatorname{val}_{\zeta'}(t)+\operatorname{val}_{\zeta'}(t-p),\ \operatorname{val}_{\zeta'}(t-1) \big)=(2,0).
    \]
    Similarly, we get
    \[
    \trop(e_1)=\trop(e_3)=\{(\alpha+1,0):\alpha\geq 1\}
    \]
    and
    \[
    \trop(e_2)=\{(2\alpha,0):0\leq\alpha\leq 1\}.
    \]
    In particular, the tropical weight of $\{(\alpha+1,0):\alpha\geq 1\}$ is $1+1=2$, and the tropical weight of $\{(2\alpha,0):0\leq\alpha\leq 1\}$ is $2$.

    We now fix a point $x$ on the skeleton near $\zeta'$ and choose a sufficiently small neighborhood $U(x)$ of $x$. Depending on the open face containing $x$, there are four possible cases; see \Cref{fig:example e3}, \Cref{fig:example e2}, \Cref{fig:example e1} and \Cref{fig:example zeta}.

    We conclude the example by noting that the local data on the tropical skeleton are not determined by the tropicalization alone. The tropical skeleton has weight $1$ in $e_1$ and $e_3$ and weight $2$ in $e_2$. At $\zeta'$, three branches meet.

    \begin{figure}[H]
        \centering
        \begin{tikzpicture}[
  scale=0.67,
  transform shape,
  x=1.1cm,
  y=1.1cm,
  >=Latex,
  line cap=round,
  line join=round
]
  \colorlet{funccol}{red!75!black}
  \colorlet{eone}{magenta!75!black}
  \colorlet{etwo}{blue!70!black}
  \colorlet{ethree}{green!50!black}
  \colorlet{efour}{orange!85!black}
  \colorlet{efive}{brown!75!black}

  \coordinate (z)  at (0,0);
  \coordinate (zp) at (3,0);

  \draw[very thick,efour]
    (z) -- (0,2.6);

  \draw[very thick,efive]
    (z) -- (-2.2,-1.5);

  \draw[very thick,etwo]
    (z) -- (zp);

  \draw[very thick,ethree]
    (zp) -- (4.45,2.45);

  \draw[very thick,eone]
    (zp) -- (4.55,-2.05);

  \fill (z)  circle (2pt);
  \fill (zp) circle (2pt);

  \node[below right=2pt] at (z)
    {$\zeta$};

  \node[below left] at (zp)
    {$\zeta'$};

  \node[above] at (0,2.6)
    {$1$};

  \node[above right] at (4.45,2.45)
    {$0$};

  \node[below right] at (4.55,-2.05)
    {$p$};

  \node[below left] at (-2.2,-1.5)
    {$\infty$};

  \node[text=efour,left] at (0,1.35)
    {$e_4$};

  \node[text=efive,above left] at (-1.1,-0.75)
    {$e_5$};

  \node[text=etwo,above] at (1.5,0)
    {$e_2$};

  \node[text=ethree,above left] at (3.25,0.35)
    {$e_3$};

  \node[text=eone,above right] at (3.8,-1.05)
    {$e_1$};

  \coordinate (x) at (3.78,1.32);

  \fill[funccol]
    (x) circle (2.2pt);

  \draw[very thick,funccol]
    (x) circle (0.72);

  \node[text=funccol,right] at (4.35,1.6)
    {$U(x)$};

  \node[text=funccol,below right] at (x)
    {$x$};

  \draw[very thick,->]
    (5.25,0) -- (6.45,0)
    node[midway,above=2pt] {$\trop$};

  \begin{scope}[shift={(9.0,0)}]
    \draw[very thick]
      (-2.2,-1.5) -- (0,0) -- (0,2.8);

    \draw[very thick]
      (0,0) -- (2,0);

    \draw[very thick,ethree]
      (2,0) -- (5,0);

    \fill (0,0) circle (2pt);
    \fill (2,0) circle (2pt);

    \node[below right] at (0,0)
      {$(0,0)$};

    \node[below] at (2,0)
      {$(2,0)$};

    \node[right] at (0,1)
      {$(0,1)$};

    \node[below=9pt] at (-1.45,-1.0)
      {$(-2,-1)$};

    \coordinate (tx) at (4.0,0);

    \fill[funccol]
      (tx) circle (2.2pt);

    \draw[very thick,funccol]
      (tx) circle (0.72);

    \node[text=funccol,above] at (4.0,0.85)
      {$\trop(U(x))$};

    \node[text=funccol,below] at (4.0,-0.18)
      {$\trop(x)$};
  \end{scope}
\end{tikzpicture}
        \caption{The case where $x\in e_3^\circ$}
        \label{fig:example e3}
    \end{figure}

    \begin{figure}[H]
        \centering
        \begin{tikzpicture}[
  scale=0.67,
  transform shape,
  x=1.1cm,
  y=1.1cm,
  >=Latex,
  line cap=round,
  line join=round
]
  \colorlet{funccol}{red!75!black}
  \colorlet{eone}{magenta!75!black}
  \colorlet{etwo}{blue!70!black}
  \colorlet{ethree}{green!50!black}
  \colorlet{efour}{orange!85!black}
  \colorlet{efive}{brown!75!black}

  \coordinate (z)  at (0,0);
  \coordinate (zp) at (3,0);

  \draw[very thick,efour]
    (z) -- (0,2.6);

  \draw[very thick,efive]
    (z) -- (-2.2,-1.5);

  \draw[very thick,etwo]
    (z) -- (zp);

  \draw[very thick,ethree]
    (zp) -- (4.45,2.45);

  \draw[very thick,eone]
    (zp) -- (4.55,-2.05);

  \fill (z)  circle (2pt);
  \fill (zp) circle (2pt);

  \node[below right=2pt] at (z)
    {$\zeta$};

  \node[below left] at (zp)
    {$\zeta'$};

  \node[above] at (0,2.6)
    {$1$};

  \node[above right] at (4.45,2.45)
    {$0$};

  \node[below right] at (4.55,-2.05)
    {$p$};

  \node[below left] at (-2.2,-1.5)
    {$\infty$};

  \node[text=efour,left] at (0,1.35)
    {$e_4$};

  \node[text=efive,above left] at (-1.1,-0.75)
    {$e_5$};

  \node[text=etwo,above] at (0.70,0)
    {$e_2$};

  \node[text=ethree,above left] at (3.75,1.2)
    {$e_3$};

  \node[text=eone,above right] at (3.8,-1.05)
    {$e_1$};

  \coordinate (x) at (1.5,0);

  \fill[funccol]
    (x) circle (2.2pt);

  \draw[very thick,funccol]
    (x) circle (0.62);

  \node[text=funccol,above] at (1.5,0.72)
    {$U(x)$};

  \node[text=funccol,below] at (1.5,-0.18)
    {$x$};

  \draw[very thick,->]
    (5.25,0) -- (6.45,0)
    node[midway,above=2pt] {$\trop$};

  \begin{scope}[shift={(9.0,0)}]
    \draw[very thick]
      (-2.2,-1.5) -- (0,0) -- (0,2.8);

    \draw[very thick]
      (0,0) -- (5,0);

    \draw[very thick,etwo]
      (0,0) -- (2,0);

    \fill (0,0) circle (2pt);
    \fill (2,0) circle (2pt);

    \node[below right] at (0,0)
      {$(0,0)$};

    \node[below] at (2,0)
      {$(2,0)$};

    \node[left] at (0,1)
      {$(0,1)$};

    \node[below=9pt] at (-1.45,-1.0)
      {$(-2,-1)$};

    \coordinate (tx) at (1.25,0);

    \fill[funccol]
      (tx) circle (2.2pt);

    \draw[very thick,funccol]
      (tx) circle (0.45);

    \node[text=funccol,above] at (1.25,0.56)
      {$\trop(U(x))$};

    \node[text=funccol,below=11pt] at (tx)
      {$\trop(x)$};
  \end{scope}
\end{tikzpicture}
        \caption{The case where $x\in e_2^\circ$}
        \label{fig:example e2}
    \end{figure}

    \begin{figure}[H]
        \centering
        \begin{tikzpicture}[
  scale=0.67,
  transform shape,
  x=1.1cm,
  y=1.1cm,
  >=Latex,
  line cap=round,
  line join=round
]
  \colorlet{funccol}{red!75!black}
  \colorlet{eone}{magenta!75!black}
  \colorlet{etwo}{blue!70!black}
  \colorlet{ethree}{green!50!black}
  \colorlet{efour}{orange!85!black}
  \colorlet{efive}{brown!75!black}

  \coordinate (z)  at (0,0);
  \coordinate (zp) at (3,0);

  \draw[very thick,efour]
    (z) -- (0,2.6);

  \draw[very thick,efive]
    (z) -- (-2.2,-1.5);

  \draw[very thick,etwo]
    (z) -- (zp);

  \draw[very thick,ethree]
    (zp) -- (4.45,2.45);

  \draw[very thick,eone]
    (zp) -- (4.55,-2.05);

  \fill (z)  circle (2pt);
  \fill (zp) circle (2pt);

  \node[below right=2pt] at (z)
    {$\zeta$};

  \node[below left] at (zp)
    {$\zeta'$};

  \node[above] at (0,2.6)
    {$1$};

  \node[above right] at (4.45,2.45)
    {$0$};

  \node[below right] at (4.55,-2.05)
    {$p$};

  \node[below left] at (-2.2,-1.5)
    {$\infty$};

  \node[text=efour,left] at (0,1.35)
    {$e_4$};

  \node[text=efive,above left] at (-1.1,-0.75)
    {$e_5$};

  \node[text=etwo,above] at (1.5,0)
    {$e_2$};

  \node[text=ethree,above left] at (3.75,1.2)
    {$e_3$};

  \node[text=eone,above right] at (3.25,-0.25)
    {$e_1$};

  \coordinate (x) at (3.85,-1.12);

  \fill[funccol]
    (x) circle (2.2pt);

  \draw[very thick,funccol]
    (x) circle (0.72);

  \node[text=funccol,right] at (4.35,-0.75)
    {$U(x)$};

  \node[text=funccol,below left] at (x)
    {$x$};

  \draw[very thick,->]
    (5.25,0) -- (6.45,0)
    node[midway,above=2pt] {$\trop$};

  \begin{scope}[shift={(9.0,0)}]
    \draw[very thick]
      (-2.2,-1.5) -- (0,0) -- (0,2.8);

    \draw[very thick]
      (0,0) -- (5,0);

    \draw[very thick,eone]
      (2,0) -- (5,0);

    \fill (0,0) circle (2pt);
    \fill (2,0) circle (2pt);

    \node[below right] at (0,0)
      {$(0,0)$};

    \node[below] at (2,0)
      {$(2,0)$};

    \node[right] at (0,1)
      {$(0,1)$};

    \node[below=9pt] at (-1.45,-1.0)
      {$(-2,-1)$};

    \coordinate (tx) at (4.0,0);

    \fill[funccol]
      (tx) circle (2.2pt);

    \draw[very thick,funccol]
      (tx) circle (0.72);

    \node[text=funccol,above] at (4.0,0.85)
      {$\trop(U(x))$};

    \node[text=funccol,below] at (4.0,-0.18)
      {$\trop(x)$};
  \end{scope}
\end{tikzpicture}
        \caption{The case where $x\in e_1^\circ$}
        \label{fig:example e1}
    \end{figure}

    \begin{figure}[H]
        \centering
        \begin{tikzpicture}[
  scale=0.67,
  transform shape,
  x=1.1cm,
  y=1.1cm,
  >=Latex,
  line cap=round,
  line join=round
]
  \colorlet{funccol}{red!75!black}
  \colorlet{eone}{magenta!75!black}
  \colorlet{etwo}{blue!70!black}
  \colorlet{ethree}{green!50!black}
  \colorlet{efour}{orange!85!black}
  \colorlet{efive}{brown!75!black}

  \coordinate (z)  at (0,0);
  \coordinate (zp) at (3,0);

  \draw[very thick,efour]
    (z) -- (0,2.6);

  \draw[very thick,efive]
    (z) -- (-2.2,-1.5);

  \draw[very thick,etwo]
    (z) -- (zp);

  \draw[very thick,ethree]
    (zp) -- (4.45,2.45);

  \draw[very thick,eone]
    (zp) -- (4.55,-2.05);

  \fill (z)  circle (2pt);
  \fill (zp) circle (2pt);

  \node[below right=2pt] at (z)
    {$\zeta$};

  \node[below left] at (zp)
    {$\zeta'$};

  \node[above] at (0,2.6)
    {$1$};

  \node[above right] at (4.45,2.45)
    {$0$};

  \node[below right] at (4.55,-2.05)
    {$p$};

  \node[below left] at (-2.2,-1.5)
    {$\infty$};

  \node[text=efour,left] at (0,1.35)
    {$e_4$};

  \node[text=efive,above left] at (-1.1,-0.75)
    {$e_5$};

  \node[text=etwo,above] at (1.5,0)
    {$e_2$};

  \node[text=ethree,above left] at (3.75,1.2)
    {$e_3$};

  \node[text=eone,above right] at (3.8,-1.05)
    {$e_1$};

  \draw[very thick,funccol]
    (zp) circle (0.85);

  \node[text=funccol,right] at (3.7,0.65)
    {$U(\zeta')$};

  \draw[very thick,->]
    (5.25,0) -- (6.45,0)
    node[midway,above=2pt] {$\trop$};

  \begin{scope}[shift={(9.0,0)}]
    \draw[very thick]
      (-2.2,-1.5) -- (0,0) -- (0,2.8);

    \draw[very thick]
      (0,0) -- (2,0);

    \draw[very thick,etwo]
      (0,0) -- (2,0);

    \begin{scope}
      \clip (1.9,0) rectangle (5.1,0.15);
      \draw[line width=2.8pt,ethree]
        (2,0) -- (5,0);
    \end{scope}

    \begin{scope}
      \clip (1.9,-0.15) rectangle (5.1,0);
      \draw[line width=2.8pt,eone]
        (2,0) -- (5,0);
    \end{scope}

    \fill (0,0) circle (2pt);
    \fill (2,0) circle (2pt);

    \node[below right] at (0,0)
      {$(0,0)$};

    \node[below] at (2,0)
      {$(2,0)$};

    \node[right] at (0,1)
      {$(0,1)$};

    \node[below=9pt] at (-1.45,-1.0)
      {$(-2,-1)$};

    \draw[very thick,funccol]
      (2,0) circle (1.05);

    \node[text=funccol,above] at (2.65,1.0)
      {$\trop(U(\zeta'))$};
  \end{scope}
\end{tikzpicture}
        \caption{The case where $x=\zeta'$}
        \label{fig:example zeta}
    \end{figure}
\end{ex}

Indeed, if $\lVert\cdot\rVert$ is a model metric, the facewise formula for $c_1(L|_X,\lVert\cdot\rVert)^{\wedge d}$ can be given by tropical intersection product after restricting the measure to a specified neighborhood of each point. At every point $x\in\Sigma(X^{\an}\cap T^{\an})$, it was shown in \cite[Lemma 8.21]{TropicalLinearIso} based on results of Ducros \cite[Theorem 3.4]{Duc12} that there is a compact strictly analytic neighborhood $V(x)$ of $x$ such that for any compact strictly analytic neighborhood $W\subset V(x)$ of $x$, the weighted germs of $\trop_*(\Sigma(V(x)))$ and $\trop_*(\Sigma(W))$ at $\trop(x)$ agree. Following \cite[8.22]{TropicalLinearIso}, we define the tropical variety $\trop(X^{\an}\cap T^{\an},x)$ of the germ $(X^{\an}\cap T^{\an},x)$ as the germ of $\trop_*(\Sigma(V(x)))$ at $\trop(x)$. It is a tropical fan of pure dimension $d$ with nonnegative weights on its $d$-faces. By \cite[Theorem 9.7(1)]{TropicalLinearIso}, the weighted fan $\trop(X^{\an}\cap T^{\an},x)$ is balanced.

We remark that
\[
|\trop_*(\Sigma(V(x)))|=\trop(V(x))\quad\text{and}\quad|\trop_*(\Sigma(W))|=\trop(W)
\]
by (\ref{eq:trop on skeleton which is surj}).

Recall that, as stated in \Cref{subsection:local charts of tropical skeletons}, for every $x\in\Sigma(X^{\an}\cap T^{\an})$, there exists a local $(\Z,\Gamma)$-polyhedral chart $\mathcal{P}_x$ of $\Sigma(X^{\an}\cap T^{\an})$ at $x$ compatible with $\trop|_{X^{\an}\cap T^{\an}}$. We fix one such chart $\mathcal{P}_x$. More precisely, $\mathcal{P}_x$ is given by $\Sigma(V_i)$, where $V_i=\trop^{-1}(\Delta_i)$ for some polytopes $\Delta_i\subset N_\R$. The arguments below apply equally to any other compatible local polyhedral chart at $x$.

In the proof of \Cref{thm:main thm}, the integral of a compactly supported polyhedral current $T$ on a piecewise linear space $\mathcal{P}$ is defined as
\[
\int_{\mathcal{P}} T\coloneqq T(1).
\]

We now prove \Cref{thm:intro main}.

\begin{thm}\label{thm:main thm}
    Let $x\in\Sigma(X^{\an}\cap T^{\an})$. Then there exists an open neighborhood $U(x)$ of $x$ such that the following holds.

    For every toric line bundle $L$ and every semipositive toric metric $\lVert\cdot\rVert$ on $L$ with tropical Green function $g_{\trop}:N_\R\to\R$, we have
    \begin{equation}\label{eq:local description of MA on U}
        \left(\trop|_{U(x)}\right)_*\left(c_1(L|_X,\lVert\cdot\rVert)^{\wedge d}|_{U(x)}\right)=\operatorname{MA}_{\operatorname{poly},C_x}\left(g_{\trop}\right),
    \end{equation}
    as measures, where $C_x$ is a fixed weighted representative of $\trop(X^{\an}\cap T^{\an},x)$ with support $\trop(U(x))$.

    Moreover, for the open face $\Delta$ of $\mathcal{P}_x$ such that $x\in\Delta$,
    \begin{equation}\label{eq:local description of MA on faces and germs}
        c_1(L|_X,\lVert\cdot\rVert)^{\wedge d}|_{\Delta\cap U(x)}=(\trop|_{\Delta\cap U(x)})^*(\operatorname{MA}_{\operatorname{poly},C_x}(g_{\trop})|_{\trop(\Delta\cap U(x))}).
        \end{equation}
\end{thm}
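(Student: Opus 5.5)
We first prove both identities when $\lVert\cdot\rVert$ is a semipositive toric model metric, and then pass to arbitrary semipositive toric metrics by approximation.

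\emph{Choice of $U(x)$ and $C_x$.} Take the compact strictly analytic neighborhood $V(x)$ of \cite[Lemma 8.21]{TropicalLinearIso}. Inside it, choose a $\Gamma$-rational polytope $\Delta'$ around $\trop(x)$ small enough that the following holds. The connected component of $\trop^{-1}(\operatorname{int}\Delta')\cap V(x)$ containing $x$ is an open set $U(x)$ such that $\Sigma(U(x))$ is an open polyhedral subset of the chart $\mathcal{P}_x$. It meets only faces of $\mathcal{P}_x$ containing $x$. Moreover, $\trop_*(\Sigma(U(x)))$ represents the weighted germ $\trop(X^{\an}\cap T^{\an},x)$ on the open set $\trop(U(x))$. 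This is possible because $\trop$ is finite-to-one on the skeleton, so small neighborhoods of $x$ separate the branches through other preimages of $\trop(x)$, as in \Cref{example: different branches}. Set $C_x\coloneqq\trop_*(\Sigma(U(x)))$, with weights given by \eqref{eq:local weights pushforward}. It is the restriction of a balanced $(\Z,\Gamma)$-complex to an open set: extend the balanced fan $\trop(X^{\an}\cap T^{\an},x)$, translated to $\trop(x)$, to all of $N_\R$. Hence $\operatorname{MA}_{\operatorname{poly},C_x}$ is well defined by the locality \eqref{eq: locality of MA poly}. The set $U(x)$ depends only on $x$, not on $L$ or the metric.

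\emph{Model case.} By \eqref{eq:id of first Chern form on X cap T}, the measure on $U(x)$ is the $\delta$-form $\trop^\star(\ddc g_{\trop})^{\wedge d}=\trop^\star((\ddc g_{\trop})^{\wedge d})$, using that pullback is multiplicative. By \Cref{prop:MA support} it is supported on $\Sigma(U(x))$. Restrict it to the tropical space $\Sigma(U(x))$ via \Cref{lem:restriction}. Then apply Mihatsch's description \eqref{eq: pullback description} with $g=f=\trop$, $p=\mathrm{id}$, and $C$ a compact polyhedral exhaustion of $\Sigma(U(x))$; the map $\trop$ has finite fibers on the skeleton. Away from the boundary images this gives
\[
\trop_*\big(\trop^\star(\ddc g_{\trop})^{\wedge d}\big)=(\ddc g_{\trop})^{\wedge d}\wedge\delta_{C_x},
\]
which equals $\operatorname{MA}_{\operatorname{poly},C_x}(g_{\trop})$ by definition. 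Letting the exhaustion grow gives \eqref{eq:local description of MA on U}.

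For \eqref{eq:local description of MA on faces and germs}, apply \eqref{eq: pullback description} instead on compact pieces $H$ of the open face $\Delta\cap U(x)$, where $\trop|_H$ is a $\Gamma$-lattice isomorphism onto its image. The intended conclusion is that the measure on $\Delta\cap U(x)$ is the pullback along this isomorphism of $\operatorname{MA}_{\operatorname{poly},C_x}(g_{\trop})$ restricted to $\trop(\Delta\cap U(x))$. Here is the subtle point, and it is the \textbf{main obstacle}. The current $(\ddc g)^{\wedge d}\wedge\delta_{C_x}$ on $\trop(\Delta)$ receives contributions from every face of $\Sigma(U(x))$ mapping onto $\trop(\Delta)$. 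We need these to come only from $\Delta$. First I would try to use the choice of $U(x)$: every face of $\Sigma(U(x))$ contains $x$. So a second face $\Delta''$ with $\trop(\Delta'')=\trop(\Delta)$ would contain $x$ and be a different face over the same image near $\trop(x)$. The facewise injectivity in \cite[8.15]{TropicalLinearIso} should exclude this once the chart is refined. If that fails, I would instead read off the $\Delta$-part directly from Mihatsch's formula. That formula uses the weights of the skeleton, and the restriction of $\trop^\star(\cdot)$ to $H$ equals $(\trop|_H)^\star$ by \Cref{lem:restriction}. Finally, because $\trop|_{\Delta\cap U(x)}$ is a homeomorphism onto its image, pushforward and pullback are inverse to each other, so the two formulations agree.

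\emph{General case.} Use \Cref{lem:approximated semipos metric by model metrics} to obtain model metrics with $g_i\to g_{\trop}$ uniformly. The left-hand sides converge weakly by the weak continuity of Chambert-Loir measures, and this convergence survives restriction to the open set $U(x)$ and pushforward by the proper map $\trop$ from the skeleton. The right-hand sides converge by \Cref{Prop:MA poly weak convergence}, with $\gamma=-\psi$. The identity on faces passes to the limit because $\trop|_{\Delta\cap U(x)}$ is a homeomorphism onto its image, so pullback preserves weak convergence of restricted measures. The one point needing care is masses on the boundary of $\Delta\cap U(x)$; I would handle it by testing only against functions compactly supported in the open face.
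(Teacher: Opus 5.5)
Your architecture matches the paper's: choose $U(x)$ inside the neighborhood $V(x)$ of \cite[Lemma 8.21]{TropicalLinearIso}, prove \eqref{eq:local description of MA on U} for model metrics via Mihatsch's pullback description for the finite-to-one map $\trop$ on the skeleton, then approximate. The gap is that the face identity \eqref{eq:local description of MA on faces and germs} is never established. What is needed is the metric-independent fiber identity
\[
(\trop|_{\Sigma(U(x))})^{-1}\big(\trop(\Delta\cap U(x))\big)=\Delta\cap U(x).
\]
Given this, \eqref{eq:local description of MA on faces and germs} follows from \eqref{eq:local description of MA on U} by evaluating on Borel subsets of $\Delta\cap U(x)$. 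Your first idea is the right one, but the decisive step is pointwise and you leave it open. Suppose $y\in\Sigma(U(x))$ and $\trop(y)=\trop(z)$ with $z\in\Delta\cap U(x)$, and let $F$ be the minimal closed face of $\mathcal{P}_x$ containing $y$. By your choice that $U(x)$ meets only faces containing $x$, we have $x\in F$. Since $x\in\operatorname{relint}(\overline{\Delta})$, $\overline{\Delta}$ is a face of $F$, so $y,z\in F$, and injectivity of $\trop|_F$ gives $y=z$. No refinement of the chart is needed. This argument also covers points in the relative interior of higher-dimensional faces whose image meets $\trop(\Delta)$, which your ``second face with the same image'' formulation does not. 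Your fallback fails. For $\dim\Delta<d$, $\Delta$ is not an open polyhedral subset of the $d$-dimensional tropical space, so \Cref{lem:restriction} does not apply. A compact $H\subset\Delta$ has empty interior, so $H\setminus\trop^{-1}(\trop(\partial H))=\varnothing$ and \eqref{eq: pullback description} says nothing there. Even for $\dim\Delta=d$ it yields the weight $m_\Delta$ rather than the weight of $C_x$ at $\trop(\Delta)$, which is exactly the discrepancy to be ruled out.

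Second, your general-case step for faces would fail as written. Weak convergence $\mu_i\to\mu$ on $U(x)$ does not give $\mu_i|_{\Delta\cap U(x)}\to\mu|_{\Delta\cap U(x)}$ for lower-dimensional $\Delta$. The Dirac masses of the model measures can sit on adjacent $d$-faces and accumulate onto $\Delta$ in the limit. Testing against functions ``compactly supported in the open face'' does not help, since $\Delta\cap U(x)$ is not open in $U(x)$. The fix is the paper's order: pass only \eqref{eq:local description of MA on U} to the limit, then deduce \eqref{eq:local description of MA on faces and germs} for the limit metric from the fiber identity.

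Relatedly, you assert but do not build into $U(x)$ the properness of $\trop\colon\Sigma(U(x))\to\trop(U(x))$. You need it both for the limit and, implicitly, in your exhaustion argument, so that preimages of compact subsets of $\trop(U(x))$ eventually lie in the interior of the exhausting pieces. The paper secures it by choosing an open $\Omega\ni\trop(x)$ disjoint from $\trop(\Sigma(V(x))\setminus V(x)^\circ)$. This is possible because $\trop^{-1}(\trop(x))\cap\Sigma(V(x))=\{x\}$, and it gives $\Sigma(U(x))=\Sigma(V(x))\cap\trop^{-1}(\Omega)$. It also keeps $\Omega$ away from $\trop$ of the boundary of $\Sigma(V(x))$, so \eqref{eq: pullback description} can be applied once with $C=\Sigma(V(x))$. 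Then neither the exhaustion nor the connected-component device is needed.
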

\begin{proof}
    Let $\Delta$ be the open face of $\mathcal{P}_x$ such that $x\in\Delta$. If the construction of $U(x)$ below gives us
    \begin{equation}\label{eq:fiber of Delta cap Ux}
        (\trop|_{\Sigma(U(x))})^{-1}(\trop(\Delta\cap U(x)))=\Delta\cap U(x),
    \end{equation}
    then the identity (\ref{eq:local description of MA on faces and germs}) follows immediately from (\ref{eq:local description of MA on U}) by evaluating both sides of the Radon measures on Borel subsets of $\Delta\cap U(x)$. 
    
    First, for every $x\in \Sigma(X^{\an}\cap T^{\an})$, we are going to construct the open neighborhood $U(x)$ of $x$.
    
    Let $V(x)$ be the compact strictly analytic neighborhood that defines the germ $\trop(X^{\an}\cap T^{\an},x)$ as discussed above, following \cite[Lemma 8.21 and 8.22]{TropicalLinearIso}. By the construction of $V(x)$ in \cite[Lemma 8.21]{TropicalLinearIso}, we may assume that $\Sigma(V(x))\subset\mathcal{P}_x$ is an abstract $(\Z,\Gamma)$-polyhedral complex and $V(x)$ does not intersect with any faces of $\mathcal{P}_x$ not containing $x$. As stated in \cite[10.5]{TropicalLinearIso}, open subsets whose closures are compact strictly analytic domains form a basis of the analytic topology of $X^{\an}\cap T^{\an}$. Thus, we can take a compact strictly analytic neighborhood $W(x)$ of $x$ such that $W(x)\subset V(x)^\circ$. Since the germs of $\trop_*(\Sigma(W(x)))$ and $\trop_*(\Sigma(V(x)))$ at $\trop(x)$ are both $\trop(X^{\an}\cap T^{\an},x)$, there exists an open neighborhood $\Omega_1$ of $\trop(x)$ such that
    \[
    \trop(W(x))\cap\Omega_1=\trop(V(x))\cap\Omega_1.
    \]
    On the other hand, by the construction of $V(x)$, we have
    \[
    \trop^{-1}(\trop(x))\cap\Sigma(V(x))=\{x\}.
    \]
    Indeed, suppose $y\in\Sigma(V(x))$ and $\trop(y)=\trop(x)$. Let $F$ be the minimal closed face of $\mathcal{P}_x$ containing $y$. Since $\Sigma(V(x))$ is disjoint from every face of $\mathcal{P}_x$ not containing $x$, we have that $x\in F$. By the injectivity of $\trop|_{F}$ given in \Cref{Section: tropical skeletons}, we have $y=x$, which is the desired identity. Therefore, it follows that
    \[
    \trop(x)\notin\trop(\Sigma(V(x))\backslash V(x)^{\circ}).
    \]
    Since $\Sigma(V(x))\backslash V(x)^{\circ}$ is compact and thus $\trop(\Sigma(V(x))\backslash V(x)^{\circ})$ is closed, there exists an open neighborhood $\Omega_2$ of $\trop(x)$ such that
    \begin{equation}\label{eq:construction of U p1}
        \Omega_2\cap\trop(\Sigma(V(x))\backslash V(x)^{\circ})=\varnothing.
    \end{equation}
    Set $\Omega\coloneqq\Omega_1\cap\Omega_2$ and let
    \[
    U(x)\coloneqq V(x)^{\circ}\cap\trop^{-1}(\Omega).
    \]
    We claim that
    \begin{equation}\label{eq:construction of U p2}
        \trop(U(x))=\trop(V(x))\cap\Omega
    \end{equation}
    and
    \begin{equation}\label{eq:construction of U p3}
        \Sigma(U(x))=\Sigma(V(x))\cap\trop^{-1}(\Omega).
    \end{equation}
    
    Indeed, it is clear that $\trop(U(x))\subset\trop(V(x))\cap\Omega$. Conversely, suppose $\omega\in\trop(V(x))\cap\Omega=\trop(W(x))\cap\Omega$. Then there exists $y\in W(x)$ such that $\omega=\trop(y)$. Since $y\in W(x)\cap\trop^{-1}(\Omega)\subset V(x)^\circ\cap\trop^{-1}(\Omega)=U(x)$, we have the identity (\ref{eq:construction of U p2}).

    To prove (\ref{eq:construction of U p3}), we notice that
    \[
    \Sigma(U(x))=\Sigma(V(x))\cap U(x)=\Sigma(V(x))\cap V(x)^\circ\cap\trop^{-1}(\Omega).
    \]
    It follows from (\ref{eq:construction of U p1}) that
    \[
    \Sigma(V(x))\cap\trop^{-1}(\Omega)\subset V(x)^\circ
    \]
    and thus (\ref{eq:construction of U p3}) holds. The claim is proved.

    Set
    \[
    C_x\coloneqq\trop_*(\Sigma(V(x)))|_\Omega=\trop_*(\Sigma(U(x))).
    \]
    Then $C_x$ is a weighted open representative of $\trop(X^{\an}\cap T^{\an},x)$ with support $\trop(U(x))$.
    By \cite[Theorem 9.7]{TropicalLinearIso}, we have $\trop_*(\Sigma(V(x)))|_\Omega$ is the restriction of a balanced weighted $(\Z,\Gamma)$-polyhedral complex in $N_\R$ to $\Omega$. We endow $C_x$ with this polyhedral structure.
    
    We next verify (\ref{eq:fiber of Delta cap Ux}). Take $y\in(\trop|_{\Sigma(U(x))})^{-1}(\trop(\Delta\cap U(x)))$. Then $y\in\Sigma(U(x))$ and there exists a $z\in\Delta\cap U(x)$ such that $\trop(y)=\trop(z)$. Let $F$ be the minimal closed face of $\mathcal{P}_x$ containing $y$. By the choice of $V(x)$, we have $x\in F$. Since $x\in\operatorname{relint}(\overline{\Delta})$, the closed face $\overline{\Delta}$ is a face of $F$. As $\trop|_F$ is a linear isomorphism as mentioned in \Cref{Section: tropical skeletons}, we have $y=z\in\Delta\cap U(x)$, which proves (\ref{eq:fiber of Delta cap Ux}). Therefore, if (\ref{eq:local description of MA on U}) holds, then (\ref{eq:local description of MA on faces and germs}) follows.

    We are now going to prove (\ref{eq:local description of MA on U}).

    We may assume that $\lVert\cdot\rVert$ is a toric model metric and thus $g_{\trop}$ is $\Gamma$-rational piecewise affine. Indeed, by \Cref{lem:approximated semipos metric by model metrics}, for any semipositive toric metric $\lVert\cdot\rVert$ with convex tropical Green function $g_{\trop}$, there exists a sequence of toric model metrics $(\lVert\cdot\rVert_i)_{i\geq 1}$ with corresponding convex $\Gamma$-rational piecewise affine tropical Green functions $(g_i)_{i\geq 1}$ such that $(g_i)_{i\geq 1}$ converges uniformly to $g_{\trop}$ on $N_\R$ and $c_1(L|_X,\lVert\cdot\rVert)^{\wedge d}$ is the weak limit of $(c_1(L|_X,\lVert\cdot\rVert_i)^{\wedge d})_{i\geq 1}$. This implies that
    \[
    c_1(L|_X,\lVert\cdot\rVert_i)^{\wedge d}|_{U(x)}\longrightarrow c_1(L|_X,\lVert\cdot\rVert)^{\wedge d}|_{U(x)}
    \]
    weakly against compactly supported continuous functions on $U(x)$. We have shown in \Cref{prop:MA support} that the non-archimedean Monge-Amp{\`e}re measures are supported on tropical skeletons. Therefore, if the map
    \[
    \trop|_{\Sigma(U(x))}:\Sigma(U(x))\to\trop(U(x))
    \]
    is proper, then
    \begin{equation}\label{eq:pf of main thm the properness of restriction}
        \left(\trop|_{U(x)}\right)_*\left(c_1(L|_X,\lVert\cdot\rVert_i)^{\wedge d}|_{U(x)}\right)\longrightarrow \left(\trop|_{U(x)}\right)_*\left(c_1(L|_X,\lVert\cdot\rVert)^{\wedge d}|_{U(x)}\right)
    \end{equation}
    weakly against compactly supported continuous functions on $\trop(U(x))$. We now verify the properness of $\trop|_{\Sigma(U(x))}$. Let $K$ be a compact subset of $\trop(U(x))$. Then by (\ref{eq:construction of U p3}),
    \[
    (\trop|_{\Sigma(U(x))})^{-1}(K)=\Sigma(U(x))\cap \trop^{-1}(K)=\Sigma(V(x))\cap\trop^{-1}(\Omega)\cap\trop^{-1}(K).
    \]
    It follows from (\ref{eq:construction of U p2}) that $K\subset\Omega$ and thus
    \[
    (\trop|_{\Sigma(U(x))})^{-1}(K)=\Sigma(V(x))\cap\trop^{-1}(K),
    \]
    is compact as a closed subset of the compact space $\Sigma(V(x))$, which is the desired result. For the right-hand side of (\ref{eq:local description of MA on U}), recall that $C=\trop_*(\Sigma(V(x)))|_\Omega$ is the restriction to $\Omega$ of a balanced $(\Z,\Gamma)$-polyhedral complex in $N_\R$. Applying \Cref{Prop:MA poly weak convergence} to such an extension with $f=g_{\trop}$ and $f_i=g_i$, and then restricting to $\Omega$, we obtain
    \[
    \operatorname{MA}_{\operatorname{poly},C_x}(g_i)\longrightarrow\operatorname{MA}_{\operatorname{poly},C_x}(g_{\trop})
    \]
    weakly against compactly supported continuous functions on $|C_x|$. Combining this with the weak convergence in (\ref{eq:pf of main thm the properness of restriction}), we reduce to the case where $\lVert\cdot\rVert$ is a toric model metric.

    It follows from (\ref{eq:id of first Chern form on X cap T}) that
    \begin{equation}\label{eq:local thm U p1}
        c_1(L|_X,\lVert\cdot\rVert)^{\wedge d}|_{X^{\an}\cap T^{\an}}=c_1(L|_{X\cap T},\lVert\cdot\rVert)^{\wedge d}=\trop^\star(\ddc g_{\trop})^{\wedge d}.
    \end{equation}
    Recall that we have seen in the proof of \Cref{prop:MA support} that $\operatorname{supp}(\trop^\star(\ddc g_{\trop})^{\wedge d})\subset\Sigma(X^{\an}\cap T^{\an})$. The integration of a full-dimensional $\delta$-form over an analytic space is defined by integration over skeletons \cite[Definition 4.10]{DeltaLubinTate}, in our case,
    \begin{equation}\label{eq:local thm U p2}
        (\trop^{\star}(\ddc g_{\trop})^{\wedge d})|_{U(x)}=(\trop^\star(\ddc g_{\trop})^{\wedge d}\wedge\delta_{\mathcal{P}_x})|_{U(x)},
    \end{equation}
    where $\delta_{\mathcal{P}_x}$ is the integration current along the piecewise linear space $\mathcal{P}_x$ that is the chosen local chart of $\Sigma(X^{\an}\cap T^{\an})$ at $x$. 
    
    Let $\phi$ be a smooth function with compact support on $\trop(U(x))$, which is a $\delta$-form of codimension $0$ by \cite[Proposition 1.5]{DeltaLubinTate}. We are going to show that
    \begin{equation}\label{eq:local thm U WTS}
        \big( \trop|_{U(x)} \big)_*\left( c_1(L|_X,\lVert\cdot\rVert)^{\wedge d}|_{U(x)} \right)(\phi)= \left((\ddc g_{\trop})^{\wedge d}\wedge\delta_{C_x}\right)(\phi).
    \end{equation}
    Since $U(x)\cap\mathcal{P}_x=\Sigma(U(x))$, combining (\ref{eq:local thm U p1}) and (\ref{eq:local thm U p2}), we have that
    \begin{multline}\label{eq:local thm U WTS P1}
        \left(\trop|_{U(x)}\right)_*\left(c_1(L|_X,\lVert\cdot\rVert)^{\wedge d}|_{U(x)}\right)(\phi)\\
        = (\trop^\star(\ddc g_{\trop})^{\wedge d}\wedge\delta_{\mathcal{P}_x})|_{U(x)}( (\trop|_{U(x)})^*\phi )\\
        = \int_{\Sigma(U(x))}\trop^\star\left( (\ddc g_{\trop})^{\wedge d} \right)\wedge (\trop|_{U(x)})^*\phi\\
        = \int_{\Sigma(U(x))}\trop^\star\left( (\ddc g_{\trop})^{\wedge d}\wedge\phi \right),
    \end{multline}
    where the last equality can be obtained by a direct computation using (\ref{eq: pullback description}) together with \Cref{lem:restriction}.
    Since $\trop|_{\Sigma(U(x))}:\Sigma(U(x))\to\trop(U(x))$ is piecewise linear, it follows from \cite[Section 4.3(4)]{DeltaLubinTate} that
    \begin{equation}\label{eq:local thm U WTS P2}
        \int_{\Sigma(U(x))}\trop^\star\left( (\ddc g_{\trop})^{\wedge d}\wedge\phi \right)=\int_{\trop(U(x))} (\trop|_{\Sigma(U(x))})_* \left(\trop^\star\left( (\ddc g_{\trop})^{\wedge d}\wedge\phi \right)\right).
    \end{equation}
    This leads us to consider
    \[
    (\trop|_{\Sigma(U(x))})_* \left(\trop^\star\left( (\ddc g_{\trop})^{\wedge d}\wedge\phi \right)\right).
    \]
    Since $\trop|_{\Sigma(V(x))}$ has finite fibers as noted in \Cref{Section: tropical skeletons}, we can apply the pullback description of $\delta$-forms (\ref{eq: pullback description}) with $C=\Sigma(V(x))$ and $g=\trop|_{\Sigma(V(x))}$ to get that
    \[
    (\trop|_{\Sigma(V(x))})_*\left(\trop^\star\left( (\ddc g_{\trop})^{\wedge d}\wedge\phi \right)\right)= (\ddc g_{\trop})^{\wedge d}\wedge\phi\wedge\delta_{\trop_*(\Sigma(V(x)))}
    \]
    away from $\trop(\partial\, \Sigma(V(x)))$, where $\partial\, \Sigma(V(x))$ is the boundary of $\Sigma(V(x))$ in the tropical space $\Sigma(X^{\an}\cap T^{\an})$. Note that
    \[
    \partial\, \Sigma(V(x))\subset \Sigma(V(x))\backslash V(x)^\circ.
    \]
    To see this, take $y\in \Sigma(V(x))\cap V(x)^\circ$. Let $U'$ be an open subset of $(X\cap T)^{\an}$ such that $y\in U'\subset V(x)^\circ$. Then $U'\cap\Sigma(X^{\an}\cap T^{\an})$ is an open subset of $\Sigma(X^{\an}\cap T^{\an})$ which is contained in the polyhedral subset $\Sigma(V(x))$. Since $y\in U'\cap\Sigma(X^{\an}\cap T^{\an})$, we have $y\notin\partial\, \Sigma(V(x))$.
    
    It follows from (\ref{eq:construction of U p1}) that
    \[
    \Omega\cap\trop(\partial\, \Sigma(V(x)))=\varnothing.
    \]
    Therefore, together with (\ref{eq:construction of U p3}), we obtain that
    \begin{equation}\label{eq:local thm U WTS P3}
        (\trop|_{\Sigma(U(x))})_*\left(\trop^\star\left( (\ddc g_{\trop})^{\wedge d}\wedge\phi \right)\right)= (\ddc g_{\trop})^{\wedge d}\wedge\phi\wedge\delta_{\trop_*(\Sigma(U(x)))}.
    \end{equation}
    Combining (\ref{eq:local thm U WTS P1}) and (\ref{eq:local thm U WTS P3}), we get the desired identity (\ref{eq:local thm U WTS}).
\end{proof}

As an immediate consequence, we get \Cref{cor:intro maximal}. Recall that $\mathcal{P}_x$ can be any local $(\Z,\Gamma)$-polyhedral chart at $x$ compatible with $\trop|_{X^{\an}\cap T^{\an}}$.

\begin{cor}\label{cor: on maximal open faces}
    Let $x\in\Sigma(X^{\an}\cap T^{\an})$ and $\Delta$ be the open face of $\mathcal{P}_x$ such that $x\in\Delta$. Suppose that $\Delta$ has dimension $d=\operatorname{dim}(X)$. Then there exists an open neighborhood $U(x)$ of $x$ such that the following holds.

    For every toric line bundle $L$ on $Y$ and every semipositive toric metric $\lVert\cdot\rVert$ on $L$ with tropical Green function $g_{\trop}:N_\R\to\R$, we have
    \[
    c_1(L|_X,\lVert\cdot\rVert)^{\wedge d}|_{\Delta\cap U(x)}= d!\, m_\Delta\, \operatorname{MA}_\R(g_{\trop}|_{\trop(\Delta\cap U(x))}\circ\trop|_{\Delta\cap U(x)})
    \]
    as measures, where $m_\Delta$ is the weight of $\Delta$.
\end{cor}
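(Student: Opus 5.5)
We take $U(x)$ to be the neighborhood constructed in the proof of \Cref{thm:main thm}. Then \eqref{eq:local description of MA on faces and germs} gives
\[
c_1(L|_X,\lVert\cdot\rVert)^{\wedge d}|_{\Delta\cap U(x)}=(\trop|_{\Delta\cap U(x)})^*\bigl(\operatorname{MA}_{\operatorname{poly},C_x}(g_{\trop})|_{\trop(\Delta\cap U(x))}\bigr).
\]
It therefore suffices to identify the polyhedral measure on $\sigma^\circ\coloneqq\trop(\Delta\cap U(x))$ and then transport it back along $\trop|_\Delta$.

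\textbf{Step 1: $\sigma^\circ$ lies in a maximal face of $C_x$ with the right weight.} Since $\dim\Delta=d$ and $\trop|_{\overline\Delta}$ is a $\Gamma$-lattice isomorphism onto a face $\sigma$ of $\Pi$, the set $\sigma^\circ$ is contained in the relative interior of a $d$-dimensional face of $C_x$. Its weight in $C_x=\trop_*(\Sigma(U(x)))$ is given by the pushforward formula \eqref{eq:local weights pushforward}. This sum runs over the $d$-faces $\Delta'$ of $\Sigma(U(x))$ with $\trop(\Delta')=\sigma$. By the fibre identity \eqref{eq:fiber of Delta cap Ux}, the only such face is $\Delta$ itself, restricted to $U(x)$. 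Hence the weight of $\sigma^\circ$ in $C_x$ equals $m_\Delta$.

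\textbf{Step 2: the polyhedral measure on a maximal face.} We claim that
\[
\operatorname{MA}_{\operatorname{poly},C_x}(g_{\trop})|_{\sigma^\circ}=d!\,m_\Delta\,\operatorname{MA}_\R(g_{\trop}|_{\sigma^\circ}),
\]
where $\operatorname{MA}_\R$ is computed with respect to the lattice $N_\sigma$. We argue as follows.
\begin{enumerate}
\item For a toric model metric, $g_{\trop}$ is $\Gamma$-rational piecewise affine, and the claim is \Cref{lem:ddf and MA}. That lemma is local, so it applies to $C_x$ after extending $C_x$ to a balanced complex on $N_\R$, which is possible as in the proof of \Cref{thm:main thm}.
\item For a general metric, choose $g_i\to g_{\trop}$ uniformly as in \Cref{lem:approximated semipos metric by model metrics}. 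The left-hand sides converge by \Cref{Prop:MA poly weak convergence}. The right-hand sides converge by the Rauch--Taylor continuity, exactly as in the proof of \Cref{prop:TMA on maximal faces}.
\end{enumerate}
Alternatively, the claim is the local form of \Cref{rmk: MApoly}.

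\textbf{Step 3: pulling back to $\Delta$.} Since $\trop|_{\Delta}$ is a $\Gamma$-lattice isomorphism, it identifies the integral structure of $\Delta$ with $N_\sigma$. Hence pullback of Lebesgue measures and of subdifferentials commutes with it, which gives
\[
(\trop|_{\Delta\cap U(x)})^*\operatorname{MA}_\R(g_{\trop}|_{\sigma^\circ})=\operatorname{MA}_\R\bigl(g_{\trop}|_{\sigma^\circ}\circ\trop|_{\Delta\cap U(x)}\bigr).
\]
Combining this with Steps 1 and 2 gives the corollary.

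The main obstacle is Step 1: making sure that no other branch of the skeleton maps onto $\sigma^\circ$ and adds to the weight. This is exactly the phenomenon in \Cref{example: different branches}, and it is ruled out by the fibre identity \eqref{eq:fiber of Delta cap Ux} established in the proof of \Cref{thm:main thm}.
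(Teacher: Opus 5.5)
Your proposal is correct and follows the paper's route. The paper derives the corollary in one line from the facewise pullback identity of \Cref{thm:main thm} together with the maximal-face comparison (\Cref{prop:TMA on maximal faces}, in its polyhedral form \Cref{rmk: MApoly}). Your Steps 1 and 3 make explicit the two points the paper leaves implicit: the weight of $\trop(\Delta\cap U(x))$ in $C_x$ equals $m_\Delta$ by the fibre identity \eqref{eq:fiber of Delta cap Ux}, and the real Monge-Amp\`ere measure transports along the $\Gamma$-lattice isomorphism $\trop|_{\Delta}$.
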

\begin{proof}
    This follows immediately from \Cref{prop:TMA on maximal faces} and \Cref{thm:main thm}.
\end{proof}

On the right-hand side of (\ref{eq:local description of MA on faces and germs}), if $\lVert\cdot\rVert$ is a toric model metric associated with the convex $\Gamma$-rational piecewise affine tropical Green function $g_{\trop}$ and $\Delta$ has dimension $r<d$, then by taking $U(x)$ sufficiently small, depending on $\lVert\cdot\rVert$, we have that
\begin{equation}\label{eq:poly MA and real MA depending on metric}
    \operatorname{MA}_{\operatorname{poly},C_x}(g_{\trop})\big|_{\trop(\Delta\cap U(x))}=m\, \operatorname{MA}_\R(g_{\trop}|_{\trop(\Delta\cap U(x))})
\end{equation}
for a constant $m=m(\lVert\cdot\rVert)$ that depends on the chosen metric $\lVert\cdot\rVert$. 

Indeed, let $\widetilde{C}$ be the balanced weighted $(\Z,\Gamma)$-polyhedral complex inducing the polyhedral structure of $C_x$. Since $g_{\trop}$ is convex $\Gamma$-rational piecewise affine, we can take $\Pi_g$ to be the $(\Z,\Gamma)$-polyhedral complex in $N_\R$ given by the maximal domains of affinity of $g_{\trop}$. Taking a subdivision of $\widetilde{C}$ induced by $\Pi_g$, which is again a balanced $(\Z,\Gamma)$-polyhedral complex, we get a refinement $C_x'$ of the polyhedral structure of $C_x$ such that $g_{\trop}$ is affine on every face.

If $\trop(x)$ is not a vertex of $C_x'$, then, after shrinking $U(x)$, there does not exist any vertices of $C_x'$ in $\trop(\Delta\cap U(x))$ and thus
\[
\operatorname{MA}_{\operatorname{poly},C_x}(g_{\trop})\big|_{\trop(\Delta\cap U(x))}=\operatorname{MA}_\R(g_{\trop}|_{\trop(\Delta\cap U(x))})=0.
\]
If $\trop(x)$ is a vertex of $C_x'$, then, after shrinking $U(x)$, the point $\trop(x)$ becomes the unique vertex of $C_x'$ in $\trop(\Delta\cap U(x))$. It follows that
\[
\operatorname{MA}_{\operatorname{poly},C_x}(g_{\trop})\big|_{\trop(\Delta\cap U(x))}=m_1\, \delta_{\trop(x)}
\]
and
\[
\operatorname{MA}_\R(g_{\trop}|_{\trop(\Delta\cap U(x))})=m_2\, \delta_{\trop(x)}.
\]
As we have seen in the proof of \Cref{prop:ddf and MA on Rn}, the constant $m_2>0$ since it is the volume of the convex hull of the slopes of $g_{\trop}|_{\trop(\Delta\cap U(x))}$ at $\trop(x)$. Therefore, by taking $m=m_1/m_2$, we get (\ref{eq:poly MA and real MA depending on metric}).

However, following \Cref{tropical MA special class}, we can restrict our attention to a class of toric metrics for which the metric dependence is reduced to the explicit factor $\lambda^k$, while the remaining coefficient is a fixed geometric weight.

Let $L$ be a toric line bundle on $Y=Y_\Sigma$ with virtual support function $\psi:N_\R\to\R$. Recall that $\psi$ is an integral piecewise linear function which is linear on every cone of $\Sigma$. Refining $\mathcal{P}_x$ by the preimages under $\trop$ of the cones of $\Sigma$, we obtain a refinement, which we denote by $\mathcal{P}_x'$.

\begin{cor}\label{cor:special class}
    Let $x\in\Sigma(X^{\an}\cap T^{\an})$. Then there exists an open neighborhood $U(x)$ of $x$ such that the following holds.
    
    Let $L$ be a toric line bundle on $Y$ with virtual support function $\psi:N_\R\to\R$. Let $\mathcal{P}_x'$ be the refinement defined above. Let $\lVert\cdot\rVert$ be a semipositive toric metric on $L$ with tropical Green function $g_{\trop}:N_\R\to\R$ such that $\rho\coloneqq g_{\trop}+\lambda\psi$ is a smooth function on $N_\R$ for $\lambda\geq 0$. Let $\Delta$ be the open face of $\mathcal{P}_x'$ of dimension $d-k$ such that $x\in\Delta$. Then
    \[
    c_1(L|_X,\lVert\cdot\rVert)^{\wedge d}|_{\Delta\cap U(x)}=\frac{d!}{k!}\, \lambda^k\, m_{\Delta,x}\, \operatorname{MA}_\R(\rho|_{\trop(\Delta\cap U(x))}\circ\trop|_{\Delta\cap U(x)}),
    \]
    where
    \begin{itemize}
        \item with the face structure of $\Sigma(U(x))$ induced by the restriction of $\mathcal{P}_x'$, we have that $C_x=\trop_*(\Sigma(U(x)))$ is an open weighted representative of $\trop(X^{\an}\cap T^{\an},x)$ with support $\trop(U(x))$,
        \item $\trop(\Delta\cap U(x))$ is an open face of $C_x$ of dimension $d-k$,
        \item $m_{\Delta,x}$ is the weight of $\trop(\Delta\cap U(x))$ in the $(d-k)$-dimensional tropical cycle $(\operatorname{div}(-\psi))^k\cdot C_x$, and
        \item $\operatorname{MA}_\R(\rho|_{\trop(\Delta\cap U(x))}\circ\trop|_{\Delta\cap U(x)})$ is the $(d-k,d-k)$-real differential form whose coefficient is the determinant of the Hessian matrix of $\rho|_{\trop(\Delta\cap U(x))}\circ\trop|_{\Delta\cap U(x)}$.
    \end{itemize}
    For $\lambda=0$, the formula reduces to the case $k=0$, while the right-hand side vanishes for $k>0$.
\end{cor}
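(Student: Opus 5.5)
The plan is to combine the pullback identity \eqref{eq:local description of MA on faces and germs} of \Cref{thm:main thm} with the facewise computation of \Cref{tropical MA special class}, carried out on the local tropical cycle $C_x$ instead of the global $\trop(X^{\an}\cap T^{\an})$. First fix $U(x)$ as constructed in the proof of \Cref{thm:main thm}, but built from the refined chart $\mathcal{P}_x'$. This is legitimate because the construction only uses a compatible local $(\Z,\Gamma)$-polyhedral chart at $x$. Refining by the preimages of the cones of $\Sigma$ keeps compatibility with $\trop$, since these cones are rational and $\psi$ has integral slopes; moreover it does not depend on $L$, because it refines by the fan. With the induced face structure, $C_x=\trop_*(\Sigma(U(x)))$ is an open weighted representative of the germ, and $\psi$ is linear on every face of it. By \eqref{eq:fiber of Delta cap Ux}, $\trop$ maps $\Delta\cap U(x)$ isomorphically onto the open face $\trop(\Delta\cap U(x))$ of $C_x$, of dimension $d-k$. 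This settles the first two bulleted assertions.

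Next I compute $\operatorname{MA}_{\operatorname{poly},C_x}(g_{\trop})$. Since $g_{\trop}=-\lambda\psi+\rho$ is convex and piecewise smooth, and it satisfies the approximation hypothesis by the discussion after \Cref{Prop:MA poly weak convergence}, \Cref{prop: PS comparison of poly MA and delta form} gives $\operatorname{MA}_{\operatorname{poly},C_x}(g_{\trop})=(\ddc g_{\trop})^{\wedge d}\wedge\delta_{C_x}$. I then expand this binomially as in \Cref{tropical MA special class}. The term of index $j$ is $\binom{d}{j}\lambda^j(\ddc\rho)^{\wedge(d-j)}\wedge\delta_{C_{x,j}}$, where $C_{x,j}=(\operatorname{div}(-\psi))^j\cdot C_x$ is a $(d-j)$-dimensional tropical cycle supported on the $(d-j)$-skeleton of $C_x$, since $\psi$ is linear on faces.

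Because $\rho$ is smooth, $(\ddc\rho)^{\wedge(d-j)}\wedge\delta_{C_{x,j}}$ is an absolutely continuous form on the maximal faces of $C_{x,j}$. Hence its restriction to the $(d-k)$-dimensional open face $\tau=\trop(\Delta\cap U(x))$ vanishes unless $j=k$: for $j<k$ the face $\tau$ is lower-dimensional in $C_{x,j}$ and carries no mass, and for $j>k$ it is not contained in the support. For $j=k$ one obtains $(d-k)!\,m_{\Delta,x}\operatorname{MA}_\R(\rho|_\tau)$, with the Hessian-determinant normalization from \cite[Lemma 1.7.1]{2Antoine}. Multiplying by $\binom{d}{k}$ gives the coefficient $\frac{d!}{k!}\lambda^k m_{\Delta,x}$.

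Finally I pull back along the $\Gamma$-lattice isomorphism $\trop|_{\Delta\cap U(x)}$ using \eqref{eq:local description of MA on faces and germs}. Pullback of the Hessian form along a linear isomorphism of lattices is exactly the Hessian form of $\rho|_\tau\circ\trop$, computed with the integral structure of $\Delta$. The $\lambda=0$ statement follows from the vanishing of $\lambda^k$ for $k>0$.

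The main obstacle is justifying that the smooth-part measure puts no mass on lower-dimensional faces of $C_{x,j}$, and that the weight is read off on $C_x$ locally rather than on the global tropical variety. I would first handle this with the locality \eqref{eq: locality of MA poly} to pass to an extension $\widetilde{C}$. I would then use the explicit description of $\ddc\rho\wedge\delta$ for smooth $\rho$ as a smooth form times the integration current, which has no polyhedral corner terms.
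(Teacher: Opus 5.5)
Your proposal is correct and follows essentially the same route as the paper. You apply the pullback identity of \Cref{thm:main thm} with the refined chart $\mathcal{P}_x'$, which is independent of $L$. You then evaluate $\operatorname{MA}_{\operatorname{poly},C_x}(g_{\trop})$ on $\trop(\Delta\cap U(x))$ through the binomial expansion of \Cref{tropical MA special class} and transport the result back along the lattice isomorphism $\trop|_{\Delta\cap U(x)}$. Two details make your local computation slightly more transparent than the paper's: you invoke \Cref{prop: PS comparison of poly MA and delta form} directly on $C_x$, instead of \Cref{prop: trop MA}, which concerns the global tropical variety, and you argue explicitly that only the $j=k$ term charges the face.
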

\begin{proof}
    Note that the restriction of $\mathcal{P}_x'$ to $\Sigma(U(x))$ induces a face structure on $\Sigma(U(x))$. It follows from \Cref{thm:main thm} that
    \[
    c_1(L|_X,\lVert\cdot\rVert)^{\wedge d}|_{\Delta\cap U(x)}=(\trop|_\Delta)^*(\operatorname{MA}_{\operatorname{poly},C_x}(g_{\trop})|_{\trop(\Delta\cap U(x))})
    \]
    for some open neighborhood $U(x)$ of $x$ and a weighted representative $C_x$ of $\trop(X^{\an}\cap T^{\an},x)$ with support $\trop(U(x))$. As in the proof of \Cref{thm:main thm}, we have that $C_x=\trop_*(\Sigma(U(x)))$, where the tropical skeleton $\Sigma(U(x))$ is a weighted abstract polyhedral complex with face structure obtained by restricting $\mathcal{P}_x$. 

    By the definition of $\mathcal{P}_x'$, the restriction of $\mathcal{P}_x'$ to $\Sigma(U(x))$ induces a polyhedral structure for which $\psi$ is affine on the tropicalization of every face. Hence, the induced polyhedral structure on $C_x=\trop_*(\Sigma(U(x)))$ is compatible with $\psi$. Since $\Delta$ is an open face of $\mathcal{P}_x'$ of dimension $d-k$ and $\trop|_\Delta$ is a linear isomorphism, the set $\trop(\Delta\cap U(x))$ is a $(d-k)$-dimensional open face of this polyhedral structure on $C_x$. 
    
    Let $m_{\Delta,x}$ be the weight of $\trop(\Delta\cap U(x))$ in the tropical cycle $(\operatorname{div}(-\psi))^k\cdot C_x$.

    By \Cref{prop: trop MA} and \Cref{tropical MA special class}, we have
    \[
    \operatorname{MA}_{\operatorname{poly},C_x}(g_{\trop})|_{\trop(\Delta\cap U(x))}=\frac{d!}{k!}\, \lambda^k\, m_{\Delta,x}\, \operatorname{MA}(\rho|_{\trop(\Delta\cap U(x))}).
    \]
    This proves the desired result.
\end{proof}

\begin{rmk}
    As a $\delta$-current supported on the tropical skeleton, a Monge-Amp{\`e}re measure for toric metrics on subvarieties of toric varieties can be computed through partitions of unity \cite[Definition 4.10(2)]{DeltaLubinTate}. The existence of a partition of unity is given by \cite[Proposition 2.5]{DeltaLubinTate}. On the other hand, \Cref{thm:main thm} and \Cref{cor: on maximal open faces} provide the required local structural description of the Monge-Amp{\`e}re measure.
\end{rmk}

{\footnotesize
\bibliographystyle{alpha}
\bibliography{ref.bib}
}

\end{document}